\newcommand{\subparagraph}{}
\newtheorem{assumption}{Assumption}
\newtheorem{thm}{Theorem}
\newtheorem{lemma}{Lemma}
\newtheorem{pre}{Proposition}
\newtheorem{rem}{Remark}
\newtheorem{com}{Comparison}
\newtheoremstyle{newstyle}     
{0 pt} %Aboveskip 
{0 pt} %Below skip
{\itshape} %Body font e.g.\mdseries,\bfseries,\scshape,\itshape
{} %Indent
{\bfseries} %Head font e.g.\bfseries,\scshape,\itshape
{.} %Punctuation afer theorem header
{.5em} %Space after theorem header
{} %Heading
\theoremstyle{newstyle}
\newcommand{\EXP}[1]{\mathsf{E}\!\left[#1\right] } 
\newcommand{\nm}[1]{{\color{black}#1}}
\newcommand{\fy}[1]{{\color{black}#1}}
\newcommand{\ap}[1]{{\color{black}#1}}
\titlespacing{\section}{0pt}{2pt}{0pt}
\author{Nahidsadat~Majlesinasab,   %~\IEEEmembership{Member,~IEEE,}
        \and Farzad~Yousefian,   %~\IEEEmembership{Fellow,~OSA,} 
		\and Arash~Pourhabib\thanks{The authors are with the School of Industrial Engineering and Management, Oklahoma State University, OK 74074, USA. They are contactable at 
\{nahid.majlesinasab, farzad.yousefian, arash.pourhabib\}@okstate.edu.}}
\title{Self-tuned mirror descent schemes for smooth and nonsmooth high-dimensional stochastic optimization}
\begin{document}
\maketitle
\thispagestyle{empty}
\pagestyle{plain}

\begin{abstract}
We consider randomized block coordinate stochastic mirror descent (RBSMD) methods for solving high-dimensional stochastic optimization problems with strongly convex objective functions. Our goal is to develop RBSMD schemes that achieve a rate of convergence with a minimum constant factor with respect to the choice of the stepsize sequence. To this end, we consider \fy{both subgradient and gradient RBSMD methods addressing nonsmooth and \ap{smooth} problems, respectively.} For each scheme, \fy{(i) }we develop self-tuned stepsize \fy{rules characterized} in terms of problem parameters and algorithm settings\fy{; (ii) we show} that the non-averaging iterate generated by the underlying RBSMD method converges to the optimal solution both in an almost sure and a mean sense; (iii) \ap{we show that  the mean squared error is minimized}. \fy{When} problem parameters are unknown, we develop a unifying self-tuned update rule that can be applied in both \fy{subgradient and gradient SMD methods, and show that for any arbitrary and small enough initial stepsize, a suitably defined error bound is minimized.} We provide constant factor comparisons with standard \fy{SMD and RBSMD} methods. Our numerical experiments performed on an SVM model display that the \fy{self-tuned schemes} are significantly robust with respect to the choice of problem parameters, and \fy{the }initial stepsize.
\end{abstract}

%{\bf Keywords}: stochastic optimization, mirror descent, self-tuned stepsize, $\ell_1$ regularization, averaging
\section{Introduction}\label{sec_intro}
%\section{Problem formulation}\label{sec:prob}
In this work, we consider the canonical stochastic optimization problem given by 
\begin{align}
\label{eqn:problem1} \tag{SO}
\underset{\beta \in \mathscr B} {\text{minimize}}\quad F(\beta):=\EXP{f(\beta,\xi)},
\end{align}
where $\mathscr B\triangleq \prod_{i=1}^l \mathscr B_i \subset \mathbb{R}^n$ \fy{with $\mathscr B_i \subset\mathbb R^{n_i}$ being nonempty, closed, and convex sets, }$n\triangleq\sum_{i=1}^l n_i$, and the function $f:\mathscr B\times\Omega \to \mathbb{R}$ is a stochastic function.
The vector $\xi:\Omega \to \mathbb{R}^d$ is a random vector associated with a probability space represented by $(\Omega, \mathcal{F},\mathbb{P})$. A wide range of problems in machine learning and signal processing can be formulated as problem \eqref{eqn:problem1}. 
In addressing problem \eqref{eqn:problem1}, challenges arise in the development of efficient solution methods \fy{mainly due to:} (i) presence of uncertainty: in many applications arising in statistical learning, the probability distribution $\mathbb{P}$ is unknown. Even when \fy{$\mathbb{P}$} is known, the evaluation of the expectation \fy{of $f$} becomes costly, \ap{especially} when $d >5$; (ii) high-dimensionality \fy{of the solution space}: another difficulty arises \fy{when $n$} is huge. In such applications \ap{the computational complexity of first-order methods significantly increases, which makes them impractical} for large values of $n$ (e.g., $10^{12}$ or more). In addressing uncertainty, \fy{the} stochastic approximation (SA) method was \ap{first} developed by Robbins and \fy{Monro \cite{robbins1951stochastic}.} Since then, \fy{the }SA method and its variants have been vastly employed \fy{in addressing} stochastic optimization (\hspace{1sp}\cite{nevelson1973stochastic,ermoliev1983stochastic,kushner2003stochastic}). Averaging techniques first introduced by Polyak and Juditsky \cite{Polyak92} proved successful in increasing the robustness of \ap{the} SA method. In vector spaces equipped with non-Euclidean norms, prox generalizations of deterministic gradient method (\hspace{1sp}\cite{yudin1983problem,beck2003mirror}) were introduced and applied in smooth and nonsmooth \fy{regimes}. Also, in \ap{the} stochastic regime, Nemirovski et al. \cite{nemirovski2009robust} developed the stochastic mirror descent (SMD) method for solving problem \eqref{eqn:problem1} when $F$ is nonsmooth and convex. Under a window-based averaging scheme, the rate of $\mathcal{O}\left(1/\sqrt{t}\right)$ is established.
When the \ap{dimension} of the solution space is huge\fy{, to reduce the} computational burden, \fy{coordinate \ap{descent}} (CD) methods have been developed in recent decades \fy{(cf. \cite{ortega1970iterative,nest10,Wotao13})}. Recently, Dang and Lan \cite{dang2015stochastic} developed \fy{the} stochastic block mirror descent (SBMD) method. They showed that \fy{under averaging} the convergence rate of $\mathcal{O}\left(l/\sqrt{t}\right)$ and $\mathcal{O}\left(l/t\right)$ can be established for the case when $F$ is merely convex, and strongly convex, respectively. 
Some recent works have considered SMD on a class of non-convex \fy{problems called \textit{variationally coherent }}\ap{that} includes pseudo-/quasiconvex\fy{,}
and star-convex optimization problems and provided convergence rate of $\mathcal{O}\left(1/\sqrt{t}\right)$ for strongly coherent problems \cite{zhou2017stochastic,zhou2017mirror}.
 While these non-asymptotic convergence orders are known to be optimal for the SMD method, the performance of this method can be significantly sensitive with respect to \fy{problem parameters and the uncertainty}. Much of the interest in the literature has focused on establishing the optimal convergence rates, and there is little guidance on development of stepsize update rules for the SMD method in order to minimize the constant factor of the associated error bounds. Motivated by this gap, our goal in this work lies in improvement of the finite-time behavior of the SMD methods and their block variants through development self-tuned stepsizes. Several efforts have been done in development of efficient stepsize rules for SA schemes (e.g., see \cite{kesten1958accelerated,saridis1970learning}). Spall \cite{Spall03} discusses a harmonic stepsize of the form $\eta_t=\frac{a}{(t+1+A)^\alpha}$ where $a>0$ is a tuning parameter, and $A \geq 0$ is the stability constant. George and Powell \cite{george2006adaptive} propose a class of harmonic stepsizes which minimizes the mean squared estimation error \fy{(see also \cite{benveniste1990stochastic}, \cite{pflug1988stepsize}, \cite{stengel1994optimal}, \cite{darken1992towards})}. Self-tuned stepsizes were first introduced in \cite{Farzad1} where a recursive update rule is developed for the stochastic (sub)gradient methods. It is shown that using such update rules, the mean squared error of the method is minimized w.r.t. the stepsize choice. Motivated by that work, we develop self-tuned stepsize rules for SMD methods and their randomized block variants (RBSMD) to solve problem \eqref{eqn:problem1} where the objective function $F$ is strongly convex. We consider two cases where (i) $F$ is nondifferentiable, (ii) $F$ is differentiable and has Lipschitz gradients. 
%While the SMD methods developed in the literature (cf. \cite{nemirovski2009robust,dang2015stochastic}) employ averaging, our goal lies in developing non-averaging schemes. 
\fy{In the following, we present the main contributions and explain the distinctions with the earlier works \cite{Farzad1,Farzad2}}: \\
\noindent \textit{(1) Convergence and complexity analysis:} For each variant of the RBSMD methods, we develop new recursive error bounds. These error bounds are given by Lemma \ref{lm:bsmd} and \ref{lem:ineqSmooth} for cases (i) and (ii), respectively. In each case, we then develop self-tuned stepiszes that are characterized in terms of problem parameters and algorithm settings. We show that under such update rules, the error function of the underlying RBSMD method converges to zero in an almost sure and a mean sense. Importantly, we show that the expected value of the error is minimized under the self-tuned stepize rules within a specified range. We also derive bounds on the probability of error of the RBSMD schemes in terms of problem parameters, algorithm settings, and \ap{the} iteration number (see Propositions \ref{pre:3} and \ref{pre:2} for cases (i) and (ii), respectively). Our results in this work extend the previous findings on self-tuned stepsizes in \cite{Farzad1,Farzad2} to a broader class of algorithms\fy{,} i.e., SMD methods and their \fy{randomized} block variants. Moreover, our approach in addressing nonsmoothness is different \ap{from} that considered in \cite{Farzad1,Farzad2}. Here we develop subgradient variants of \fy{the} RBSMD method allowing us to prove convergence to \fy{the true} optimal solution to problem \eqref{eqn:problem1}, while in \cite{Farzad1,Farzad2} a \fy{convolution-based }smoothing scheme is applied \fy{where the nondifferentiable function $F$ is approximated by a smooth function defined on an expansion of the feasible set $\mathscr{B}$. Consequently, in addressing nonsmooth problems, the convergence in \cite{Farzad1,Farzad2} is established to the optimal solution of an approximate smooth problem.}  
\\
\noindent \textit{(2) Unifying self-tuned stepsizes:} Another improvement to \cite{Farzad1,Farzad2} \fy{ in deriving self-tuned schemes is addressing the case where some of problem parameters are unknown. In this case,} we develop \textit{unifying self-tuned stepsizes} and \fy{show} convergence in both an almost sure and a mean sense. Importantly, we show that for \fy{any} arbitrary and small enough initial stepsize, a suitably defined error bound of the \fy{SMD scheme} is minimized (see Theorem \ref{thm:1}). This indeed implies robustness of the proposed schemes w.r.t. the choice of initial stepize and addresses a common challenge associated with the harmonic choice of stepsizes. \\
\noindent \textit{(3) Constant factor comparison:} While we prove the superiority of the constant factor of the error bounds associated with the \fy{self-tuned RBSMD schemes}, we also provide two sets of comparisons: (i) with a widely used harmonic stepsizes (e.g., in \cite{nemirovski2009robust,Spall03}), and also (ii) with an averaging RBSMD scheme developed in \cite{dang2015stochastic}. In case (ii), our comparison implies the constant factor for the class of stochastic subgradient methods can be improved up to four times under non-averaging schemes versus using the averaging scheme in\cite{dang2015stochastic}. 
%\noindent \textit{(4) Implementation results:} We present the performance of the unifying self-tuned stepsizes applied on SVM models under two different data sets. Our results indicate the robustness of the developed schemes with respect to problem parameters, uncertainty, and the initial stepsize. 

The rest of the technical note is organized as follows. In Section \ref{sec:bsmd}, we describe the randomized block coordinate SMD methods, then develop a self-tuned stepsize scheme for smooth and nonsmooth problems and provide the convergence rate analysis. We present \ap{the}  experimental results in Section \ref{sec:num} and conclude with some remarks in Section \ref{sec:conclusion}.

%\section{Related Work}\label{sec_lit}
\textbf{Notation}. Throughout, we abbreviate ``almost surely'' as a.s., while $Prob~(Z)$ and $\EXP{z}$ are used to denote the probability of an event $Z$, and the expectation of a random variable $z$, respectively. We let $\beta^i \in \mathbb R^{n_i}$ \fy{denote} the $i$th block coordinate of vector $\beta \in \mathbb R^n$, and the subscript $i$ \fy{represent} the $i$th block of a mapping in $\mathbb R^n$. For any $i=1,\ldots,l$, we use $\| \cdot \|_i$ to denote the general norm on $\mathbb R^{n_i}$ and $\| \cdot \|_{*i}$ to denote its dual norm. The inner product of vectors $u, v \in \mathbb R^{n}$ is  defined by $\left\langle u, v\right\rangle:\triangleq \sum_{i=1}^l\left\langle u^i, v^i\right\rangle$. We define norm $\|\cdot\|$ as $\|x\|^2:\triangleq\sum_{i=1}^d\|x^i\|_i^2$ for any $x \in \mathbb R^n$, and denote its dual norm by$\|\cdot\|_*$. \fy{Throughout, $p_i$ denotes the probability associated with choosing the $i$th block coordinate.}
We \fy{use the notation} $p_\wedge:\triangleq\min\limits_{1\leq i\leq l}p_i$, $p_\vee:\triangleq\max\limits_{1\leq i\leq l}p_i$, $\mathfrak L_{max}:\triangleq\max\limits_{1\leq i\leq l}\mathfrak L_{\omega_i}$, and $\mu_{min}:\triangleq\min\limits_{1\leq i\leq l}\mu_{\omega_i}$.

\section{Self-tuned randomized block coordinate SMD}
\label{sec:bsmd}
{
In this section, our goal is to develop \fy{self-tuned randomized block coordinate SMD} methods. We start with the case where the
objective function is non-differentiable. Later, in Section \ref{subsec:diff}, we discuss the case of differentiable objective functions with
Lipschitz gradients. In Section \ref{subsec:unifying}, we provide unifying self-tuned update rules addressing both cases in absence of problem
parameters.
Let the distance generating function \fy{$\omega_i:\mathbb{R}^{n_i} \to \mathbb{R}$} be a continuously differentiable function. The Bregman divergence \fy{$D_{\omega_i}:\mathbb{R}^{n_i}\times \mathbb{R}^{n_i} \to \mathbb{R}$} associated with $\omega_i$ \fy{is given for $\beta_1, \beta_2 \in \mathscr B_i$ as}
\begin{equation*}
D_{\omega_i}(\beta_1, \beta_2)=\omega_i(\beta_2)-\omega_i(\beta_1)-\langle\nabla\omega_i(\beta_1),\beta_2-\beta_1\rangle.
\end{equation*}
Let $\nabla _{\beta_2} D_{\omega_i}(\cdot,\cdot)$ denote the partial derivative of $D_{\omega_i}(\beta_1, \beta_2)$  with respect to  $\beta_2$. Then, 
\begin{equation}
\label{eq:73}
\nabla_{\beta_2} D_{\omega_i}(\beta_1,\beta_2)=\nabla \omega_i(\beta_2)-\nabla \omega_i(\beta_1), \quad \hbox{for all} \quad \beta_1, \beta_2 \in \mathscr B_i.
\end{equation}
The Bregman divergence has the following property for all $\beta_1, \beta_2,\beta_3 \in \fy{\mathscr B_i}$
\begin{align}
\label{eq:74}
&D_{\omega_i}(\beta_1, \beta_2)-D_{\omega_i}(\beta_3, \beta_2)=D_{\omega_i}(\beta_1, \beta_3)+\langle\nabla\omega_i(\beta_3)-\nabla\omega_i(\beta_1),\beta_2-\beta_3\rangle.
\end{align}
We assume the distance generating function $\omega_i$ has Lipschitz gradients with parameter $\mathfrak L_{\omega_i}$ and is strongly convex with parameter $\mu_{\omega_i}$, i.e., for all $\beta_1, \beta_2,\beta_3 \in \fy{\mathscr B_i}$ 
\begin{equation}
\label{eq:75}
\frac{\mu_{\omega_i}}{2}\|\beta_2-\beta_1\|^2 \leq D_{\omega_i}(\beta_1, \beta_2) \leq \frac{\mathfrak L_{\omega_i}}{2}\|\beta_2-\beta_1\|^2.
\end{equation}
}
\begin{rem}
Lipschitzian property of \fy{$\omega_i$} is a standard assumption in the literature of SMD methods; the convergence \fy{rate} analysis provided \fy{in \cite{nedic2014stochastic} and \cite{dang2015stochastic}} relies on this property. \fy{Also note that for the} stochastic gradient descent (SGD) \fy{method}, we have $\mu_\omega=\mathfrak L_\omega=1$.
\end{rem}
The prox mapping $\mathcal P_i:\mathscr B_i\times\mathbb R^{n_i}\rightarrow \mathscr B_i$ is defined by
\begin{align}
\label{eq:proxi}
\mathcal P_i(\beta_1,\beta_2)=\underset{z \in \mathscr B_i}{\text{argmin}}\{\left\langle \beta_2,z\right\rangle+D_i(\beta_1,z)\},
\end{align}
for all $\beta_1 \in \mathscr B_i$ \fy{and} $\beta_2 \in \mathbb R^{n_i}$. \fy{In the analysis, }we use the following error function $\mathcal L:\mathscr B\times \mathscr B\rightarrow\mathbb R$ defined as 
\begin{align}
\label{eq:lypanov}
\mathcal L (\beta, z)\triangleq \sum\nolimits_{i=1}^l {p_i}^{-1}{D_i(\beta^i, z^i)},\quad  \hbox{for all } \beta, z \in \mathscr B.
\end{align}
\subsection{Self-tuned randomized block subgradient SMD method}\label{subsec:nondiff}
Consider problem \eqref{eqn:problem1} where $F$ is a non-differentiable convex function of $\beta$. Let $\mathbf g_t \in \partial F(\beta_t)$ denote a subgradient of function $F$ at point $\beta_t\in \mathscr B$, \fy{i.e., there exists $\mathbf g_t$ such that} 
\begin{align*}
F(\beta_t)+\langle\mathbf g_t, \beta- \beta_t\rangle \leq F(\beta), \quad \hbox{for all } \beta \in \mathscr B.
\end{align*}
Similarly, for any $\xi \in \Omega$, we let $\tilde g_t \in \partial f(\beta_t,\xi)$ denote a subgradient of function $f(\cdot,\xi)$ at point $\beta_t$. \fy{ Throughout,} we assume that $F$ is strongly convex with parameter $\mu_F>0$ over the set $\mathscr{B}$ with respect to the underlying norm $\|\cdot\|$, i.e., for all $\beta_1, \beta_2 \in \mathscr B$ and $\mathbf g \in \partial F(\beta_2)$
\begin{equation}\label{ineq:strong} 
F(\beta_1) \geq F(\beta_2)+ \langle \mathbf g,\beta_1-\beta_2\rangle+\frac{\mu_F}{2}\|\beta_1-\beta_2\|^2. 
\end{equation}
In our analysis, we make use of the following result. The proof can be found in \cite{nesterov2013introductory}.
\begin{lemma}\label{lm:2}
Consider problem \eqref{eqn:problem1}. Let $F$ be strongly convex with parameter $\mu_F>0$. Then, there exists a unique optimal solution $\beta^* \in \mathscr B$. Moreover, we have 
\begin{equation*} 
F(\beta)-F(\beta^*) \geq \frac{\mu_F}{2}\|\beta-\beta^*\|^2, \quad \hbox{for all } \beta \in \mathscr B.
\end{equation*}
\end{lemma}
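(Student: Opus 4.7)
The plan is to establish existence and uniqueness of the minimizer first, and then derive the quadratic growth bound by combining the strong convexity inequality (\ref{ineq:strong}) with the first-order optimality condition at $\beta^*$.

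For existence, I would begin by using (\ref{ineq:strong}) to show that $F$ is coercive on $\mathscr B$. Fixing any $\beta_0 \in \mathscr B$ and any subgradient $\mathbf g_0 \in \partial F(\beta_0)$, applying (\ref{ineq:strong}) with $\beta_1 := \beta$ and $\beta_2 := \beta_0$ yields
\[
F(\beta) \,\geq\, F(\beta_0) + \langle \mathbf g_0, \beta - \beta_0\rangle + \tfrac{\mu_F}{2}\|\beta-\beta_0\|^2,
\]
and the quadratic term dominates the linear term as $\|\beta\|\to\infty$, so every sublevel set of $F$ intersected with $\mathscr B$ is bounded. Since $F$ is finite-valued and convex it is continuous on the relative interior of $\mathscr B$ and lower semicontinuous on the closed convex set $\mathscr B$; Weierstrass' theorem then produces a minimizer $\beta^*\in\mathscr B$. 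For uniqueness, strong convexity forces strict convexity, so if two distinct minimizers $\beta^*_1,\beta^*_2$ existed, the midpoint would satisfy $F\!\left(\tfrac12\beta^*_1+\tfrac12\beta^*_2\right) < F(\beta^*_1) = F(\beta^*_2)$, contradicting optimality.

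For the quadratic growth bound, I would specialize (\ref{ineq:strong}) to $\beta_1 := \beta$ and $\beta_2 := \beta^*$, selecting a particular subgradient $\mathbf g^* \in \partial F(\beta^*)$, obtaining
\[
F(\beta) \,\geq\, F(\beta^*) + \langle \mathbf g^*, \beta-\beta^*\rangle + \tfrac{\mu_F}{2}\|\beta-\beta^*\|^2, \qquad \beta\in\mathscr B.
\]
The first-order optimality condition for the convex problem $\min_{\beta\in\mathscr B}F(\beta)$ guarantees the existence of $\mathbf g^*\in\partial F(\beta^*)$ such that $\langle \mathbf g^*, \beta-\beta^*\rangle\geq 0$ for every $\beta\in\mathscr B$. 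Choosing this particular subgradient and discarding the nonnegative inner product gives the claimed bound.

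The main potential obstacle is purely bookkeeping: ensuring that the subgradient used in the first-order optimality condition is the very same one inserted into (\ref{ineq:strong}), and checking lower semicontinuity up to the boundary of $\mathscr B$ so that Weierstrass applies. Neither step introduces genuine difficulty given that $F$ is a finite, convex, and $\mu_F$-strongly convex function on the nonempty closed convex set $\mathscr B$, which explains why the authors simply cite \cite{nesterov2013introductory}.
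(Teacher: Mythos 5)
Your proof is correct and is essentially the standard argument: the paper itself offers no proof of Lemma~\ref{lm:2}, deferring entirely to the citation of \cite{nesterov2013introductory}, and your combination of coercivity plus Weierstrass for existence, strict convexity for uniqueness, and inequality \eqref{ineq:strong} at $\beta_2:=\beta^*$ together with the first-order optimality condition $\langle \mathbf g^*,\beta-\beta^*\rangle\geq 0$ is precisely the textbook route that citation points to. The only point worth noting is that lower semicontinuity of $F$ up to the boundary of $\mathscr B$ follows immediately from \eqref{ineq:strong} itself, since the assumed existence of subgradients at every point of $\mathscr B$ exhibits $F$ as a supremum of affine minorants, so the ``bookkeeping'' obstacle you flag is already resolved by the paper's hypotheses.
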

Next we present the outline of the randomized block coordinate SMD method. Let $P_b$ be a discrete probability distribution with probabilities $p_i>0$ for $i=1,\ldots,l$, where $\sum_{i=1}^lp_i=1$. Given an initial vector $\beta_0 \in \mathscr B$, at iteration $t\geq 1$, random variable $i_t$ is generated from the probability distribution $P_b$ independently from random variable $\xi$. Then, only the $i_t$th block of $\beta_t$, i.e. $\beta_t^{i_t}$, is updated as follows:
				 \begin{equation}\label{alg:BSSMD}\tag{RB-SSMD}
					\beta_{t+1}^{i}=\left\{
					\begin{array}{ll}
           \mathcal P_{i_t}\big(\beta_{t}^{i_t}, \eta_t \tilde g_{i_t}(\beta_t)\big) & \text{if } i = i_t,\\
           \beta_{t}^{i}& \text{if } i \neq i_t,
     \end{array} \right.
				 \end{equation} 
where $\tilde g_{i_t}(\beta_t)$ is the $i_t$th block of the \fy{subgradient of $f(\beta_t,\xi_t)$ and $\eta_t$ is the \nm{stepsize}. Throughout, let $\mathcal F_t=\{i_0,\xi_0,\ldots,i_{t-1},\xi_{t-1}\}$.} %We define $z_t$ as the difference between the sample gradient $\tilde g(\beta)$ and its expectation $\mathbf g(\beta)$ estimated at $\beta=\beta_t$, i.e. $z_t=\tilde g(\beta_t)-\textbf g(\beta_t)$.
 Next, we state the main assumptions.
\begin{assumption}
\label{ass:3}
Let the stochastic subgradient $\tilde g(\beta) \in \partial f(\beta,\xi)$ be such that a.s. for all $\beta \in \mathscr B$, we have $\EXP{\tilde g(\beta)|\beta}=\mathbf g(\beta)  \in \partial F(\beta)$. Moreover, for all $i=1,\ldots, l$ and $\beta \in \mathscr B$, there exists a scalar $C_i>0$ such that $
\EXP{\| \tilde g_i(\beta)\|_{*_i}^2|\beta}\leq  C_i^2$.
\end{assumption}
Next, we develop a recursive inequality in terms of the error of the \eqref{alg:BSSMD} scheme. Such a recursive inequality will be employed to develop a self-tuned stepsize rule.
\begin{lemma}\label{lm:bsmd}
Let Assumption \ref{ass:3} hold and $\beta_t$ be generated by the \eqref{alg:BSSMD} scheme. Then for all $t\geq 0$,
\begin{align}
\label{eq:bsmd_ineq}
\EXP{\mathcal L (\beta_{t+1}, \beta^*)|\mathcal F_t} &\leq\left(1-\eta_t{2\mu_F p_\wedge}{\mathfrak L^{-1}_{max}} \right)\mathcal L (\beta_{t}, \beta^*)+\eta_t^2 \sum\nolimits_{i=1}^l C_{i}^2({2\mu_{\omega_i}})^{-1}.
\end{align}
\end{lemma}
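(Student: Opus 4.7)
The plan is to chain together three standard ingredients in the mirror-descent analysis (optimality of the prox, a three-point Bregman identity, and Young's inequality), then use the block-sampling law to pass to the weighted error $\mathcal L$, and finally convert the linear-in-$\beta$ term into a $-c\mathcal L(\beta_t,\beta^*)$ contraction via strong convexity of $F$ together with the Lipschitz upper bound on each $D_i$.

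First, I would fix an iteration $t$ and condition on $\mathcal F_t$ and $\xi_t$. Because only the $i_t$-th block moves, writing out the conditional expectation over $i_t\sim P_b$ yields
\begin{align*}
\EXP{\mathcal L(\beta_{t+1},\beta^*)\mid \mathcal F_t,\xi_t}
=\mathcal L(\beta_t,\beta^*)+\sum_{j=1}^l\bigl[D_j(\mathcal P_j(\beta_t^j,\eta_t\tilde g_j),\beta^{*j})-D_j(\beta_t^j,\beta^{*j})\bigr],
\end{align*}
since the factor $p_j^{-1}$ on the updated block is exactly cancelled by the probability $p_j$ that block $j$ is picked, while the untouched blocks contribute zero to the bracket.

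Next I would bound each summand on its own. Applying the first-order optimality condition for the prox \eqref{eq:proxi} at $z=\beta^{*j}$ gives $\langle\eta_t\tilde g_j+\nabla\omega_j(\beta_{t+1}^j)-\nabla\omega_j(\beta_t^j),\beta^{*j}-\beta_{t+1}^j\rangle\ge 0$, and then the three-point identity \eqref{eq:74} (with $\beta_1=\beta_t^j$, $\beta_2=\beta^{*j}$, $\beta_3=\beta_{t+1}^j$) converts the Bregman difference into
\begin{align*}
D_j(\beta_{t+1}^j,\beta^{*j})-D_j(\beta_t^j,\beta^{*j})
\le \eta_t\langle \tilde g_j,\beta^{*j}-\beta_t^j\rangle+\eta_t\langle\tilde g_j,\beta_t^j-\beta_{t+1}^j\rangle-D_j(\beta_t^j,\beta_{t+1}^j).
\end{align*}
Splitting the second inner product by Fenchel--Young, $\eta_t\langle\tilde g_j,\beta_t^j-\beta_{t+1}^j\rangle\le\frac{\eta_t^2}{2\mu_{\omega_j}}\|\tilde g_j\|_{*_j}^2+\frac{\mu_{\omega_j}}{2}\|\beta_t^j-\beta_{t+1}^j\|_j^2$, and absorbing the quadratic term into $D_j(\beta_t^j,\beta_{t+1}^j)$ via the lower bound in \eqref{eq:75}, kills the $-D_j$ term and leaves
\begin{align*}
D_j(\beta_{t+1}^j,\beta^{*j})-D_j(\beta_t^j,\beta^{*j})\le \eta_t\langle \tilde g_j,\beta^{*j}-\beta_t^j\rangle+\tfrac{\eta_t^2}{2\mu_{\omega_j}}\|\tilde g_j\|_{*_j}^2.
\end{align*}
Summing over $j$ reconstructs the full inner product $\langle\tilde g(\beta_t),\beta^*-\beta_t\rangle$.

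Finally I would take expectation over $\xi_t$ given $\mathcal F_t$. Assumption \ref{ass:3} turns $\EXP{\langle\tilde g(\beta_t),\beta^*-\beta_t\rangle\mid\mathcal F_t}$ into $\langle\mathbf g(\beta_t),\beta^*-\beta_t\rangle$, and the noise term becomes $\sum_j C_j^2/(2\mu_{\omega_j})$. The subgradient inequality for strong convexity \eqref{ineq:strong} combined with Lemma \ref{lm:2} yields $\langle\mathbf g(\beta_t),\beta^*-\beta_t\rangle\le -\mu_F\|\beta_t-\beta^*\|^2$. The last conversion, which I expect to be the one step worth writing carefully, is linking $\|\beta_t-\beta^*\|^2$ to $\mathcal L(\beta_t,\beta^*)$: using the upper bound in \eqref{eq:75} blockwise gives $\sum_i D_i(\beta_t^i,\beta^{*i})\le\tfrac{\mathfrak L_{max}}{2}\|\beta_t-\beta^*\|^2$, while $\mathcal L(\beta_t,\beta^*)\le p_\wedge^{-1}\sum_i D_i(\beta_t^i,\beta^{*i})$, so $\|\beta_t-\beta^*\|^2\ge \tfrac{2p_\wedge}{\mathfrak L_{max}}\mathcal L(\beta_t,\beta^*)$. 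Plugging this in produces exactly the contraction factor $1-\eta_t\cdot 2\mu_F p_\wedge\mathfrak L_{max}^{-1}$ in front of $\mathcal L(\beta_t,\beta^*)$, completing \eqref{eq:bsmd_ineq}.
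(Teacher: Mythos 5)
Your proposal is correct and follows essentially the same route as the paper's proof: prox optimality plus the three-point identity \eqref{eq:74}, Fenchel--Young with the strong convexity of $\omega_j$ to absorb the $-D_j(\beta_t^j,\beta_{t+1}^j)$ term, the $p_j\cdot p_j^{-1}$ cancellation to pass to $\mathcal L$, and the chain $\|\beta_t-\beta^*\|^2\ge 2p_\wedge\mathfrak L_{max}^{-1}\mathcal L(\beta_t,\beta^*)$ via \eqref{eq:75}. The only cosmetic deviations are the order in which you integrate out $i_t$ versus $\xi_t$ (immaterial by independence) and your use of the subgradient inequality together with Lemma \ref{lm:2} to get $\langle\mathbf g_t,\beta^*-\beta_t\rangle\le-\mu_F\|\beta_t-\beta^*\|^2$, where the paper instead combines subdifferential monotonicity with the optimality condition at $\beta^*$; both yield the same constant.
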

\begin{proof}
%For notational convenience, we denote 
At iteration $t$, we have $\beta_{t+1}^{i_t}=\mathcal P_{i_t}\big(\beta_{t}^{i_t}, \eta_t \tilde g_{i_t}(\beta_t)\big)$. Consider the definition of $\mathcal P_{i_t}$ given by \eqref{eq:proxi}. Writing the optimality \fy{condition}, we have
\begin{equation*}%\label{eq:202}
\langle{\eta_t} \tilde g_{i_t} +\nabla D_{i_t}(\beta_t^{i_t}, \beta_{t+1}^{i_t}), \beta^{i_t}-\beta_{t+1}^{i_t}\rangle \geq 0, \quad \hbox{for all } \beta \in \mathscr B.
\end{equation*}
Using relations (\ref{eq:73}) and (\ref{eq:74}), and from the preceding relation,
\begin{align*}
&D_{i_t}(\beta_t^{i_t}, \beta^{i_t})-D_{i_t}(\beta_{t+1}^{i_t}, \beta^{i_t})-D_{i_t}(\beta_t^{i_t}, \beta_{t+1}^{i_t})\geq {\eta_t}\langle \tilde g_{i_t} ,\beta_{t+1}^{i_t}-\beta^{i_t}\rangle\fy{, \quad \text{for all } \beta \in} \mathscr B.
\end{align*}
From the strong convexity of $\omega_{i_t}$ and relation~(\ref{eq:75}), we have
\begin{align}
\label{eq:77}
&D_{i_t}(\beta_t^{i_t}, \beta^{i_t})-D_{i_t}(\beta_{t+1}^{i_t}, \beta^{i_t})-0.5{\mu_{\omega_{i_t}}}\|\beta_t^{i_t}-\beta_{t+1}^{i_t}\|_{i_t}^2 \geq{\eta_t}\langle \tilde g_{i_t} ,\beta_{t+1}^{i_t}-\beta^{i_t}\rangle.
\end{align}
By adding and subtracting ${\eta_t}\langle \tilde g_{i_t} ,\beta_{t}^{i_t}\rangle$ in the right-hand side, \fy{and using Fenchel's inequality, we have}
\begin{align}
\label{eq:78}
{\eta_t}\langle \tilde g_{i_t} ,\beta_{t+1}^{i_t}-\beta_t^{i_t}\rangle+{\eta_t}\langle \tilde g_{i_t} ,\beta_{t}^{i_t}-\beta^{i_t}\rangle 
%& \geq -\Big|\langle \frac{\eta_t}{\sqrt{\mu_\omega}}\tilde g_t ,\sqrt{\mu_\omega}(\beta_{t+1}-\beta_t)\rangle\Big|+{\eta_t}\langle \tilde g_t ,\beta_{t}-\beta\rangle \nonumber\\
&\geq  {-0.5\eta_t^2}{\mu^{-1}_{\omega_{i_t}}}\| \tilde g_{i_t}  \|^2_{*i_t}-0.5{\mu_{\omega_{i_t}}}\|\beta_{t+1}^{i_t}-\beta_{t}^{i_t}\|_{i_t}^2\notag\\
&+{\eta_t}\langle \tilde g_{i_t} ,\beta_{t}^{i_t}-\beta^{i_t}\rangle.
\end{align}
Combining~(\ref{eq:77}) and~(\ref{eq:78}) yields for all $\beta \in \mathscr B$
\begin{align*}
D_{i_t}(\beta_{t+1}^{i_t}, \beta^{i_t}) &\leq D_{i_t}(\beta_{t}^{i_t}, \beta^{i_t})+{\eta_t}\langle \tilde g_{i_t} ,\beta^{i_t}-\beta_{t}^{i_t}\rangle+0.5{\eta_t^2}{\mu^{-1}_{\omega_{i_t}}}\| \tilde g_{i_t}  \|^2_{*i_t}.
\end{align*}
%By taking the conditional expectation on $\mathcal F_t \cup \{i_t\}$ from both sides of the preceding relation, and using Assumption~\ref{ass:3}, for $\beta:=\beta^*$ we have
%\begin{align}\label{eq:87}
%{\eta_t}\langle \mathbf g_{i_t} ,\beta_{t}^{i_t}-\beta^{i_t}\rangle+\EXP{D_{i_t}(\beta_{t+1}^{i_t}, \beta^{i_t})\mid \mathcal F_t \cup \{i_t\}} \leq D_{i_t}(\beta_{t}^{i_t}, \beta^{i_t})+\eta_t^2\frac{C_{i_t}^2}{2\mu_{\omega_{i_t}}}.
%\end{align}
From the preceding relation, relation \eqref{eq:lypanov}, and that $\beta_{t+1}^{i}=\beta_t^i$ for all $i\neq i_t$, we have 
\begin{align*}
\mathcal L(\beta_{t+1}, \beta)&\leq\sum\nolimits_{i\neq i_t} p_i^{-1}D_i(\beta_t^i, \beta^i)+ p_{i_t}^{-1}\big(D_{i_t}(\beta_{t}^{i_t},\beta^{i_t})+\eta_t\langle \tilde g_{i_t},\beta^{i_t}-\beta_t^{i_t}\rangle+0.5{\eta_t^2}{\mu^{-1}_{\omega_{i_t}}}\| \tilde g_{i_t}  \|^2_{*i_t}\big)\\&=\mathcal L (\beta_{t}, \beta)
+p_{i_t}^{-1}\left(\eta_t\langle \tilde g_{i_t},\beta^{i_t}-\beta_t^{i_t}\rangle+0.5{\eta_t^2}{\mu^{-1}_{\omega_{i_t}}}\| \tilde g_{i_t}  \|^2_{*i_t}\right).
\end{align*}
Taking conditional expectations from both sides of the preceding relation on $\mathcal F_t\cup \{i_t\}$, we get 
\begin{align*}
\label{eq:bala}
\EXP{\mathcal L (\beta_{t+1}, \beta)\mid \mathcal F_t\cup \{i_t\}} &\leq \mathcal L (\beta_{t}, \beta)+0.5{\eta_t^2}{\mu^{-1}_{\omega_{i_t}}p^{-1}_{i_t}}\EXP{\| \tilde g_{i_t}  \|^2_{*i_t}\mid \mathcal F_t\cup \{i_t\}}\notag \\&+\frac{\eta_t}{p_{i_t}}\langle\EXP{\tilde g_{i_t}\mid \mathcal F_t\cup \{i_t\}},\beta^{i_t}-\beta_t^{i_t}\rangle \notag \\
&\leq\mathcal L (\beta_{t}, \beta)+p_{i_t}^{-1}\eta_t\left\langle \mathbf g_{i_t},\beta^{i_t}-\beta_t^{i_t}\right\rangle+p_{i_t}^{-1}\eta_t^2\frac{C_{i_t}^2}{2\mu_{\omega_{i_t}}},
\end{align*} 
where we \fy{used }Assumption \ref{ass:3}. Taking expectations from previous inequality with respect to $i_t$ and setting $\beta:=\beta^*$,
\begin{align*}
\EXP{\mathcal L (\beta_{t+1}, \beta^*)\mid \mathcal F_t} &\leq \mathcal L (\beta_{t}, \beta^*)+\sum_{i=1}^l\frac{p_i}{p_i}\left(\eta_t\left\langle \mathbf g_{i},\beta^{*i}-\beta_t^{i}\right\rangle +\eta_t^2\frac{C_{i}^2}{2\mu_{\omega_{i}}}\right)
\notag\\
&=\mathcal L (\beta_{t}, \beta^*)+\eta_t\left\langle \mathbf g_t,\beta^*-\beta_t\right\rangle+\eta_t^2\sum_{i=1}^l\frac{C_{i}^2}{2\mu_{\omega_{i}}},
\end{align*} 
where we use the definition of $\left\langle \cdot, \cdot\right\rangle$ given in the notation. From strong convexity of function $F$, we have
$\langle \mathbf g_t -\mathbf g^*,\beta_t -\beta^*\rangle \geq \mu_F\|\beta_t-\beta^*\|^2.$
By optimality of $\beta^*$, we have $\langle \mathbf g^*, \beta_t-\beta^*\rangle \geq 0$. From the two preceding relations and the definition of norm,
\begin{align*}
\left\langle \mathbf g_t,\beta_t-\beta^*\right\rangle &\geq \mu_F\sum_{i=1}^l\|\beta_t^i-\beta^{*i}\|_i^2\geq 2\mu_F\sum_{i=1}^l\frac{D_i(\beta_t^i,\beta^{*i})}{\mathfrak L_{\omega_i}}\\&\geq {2\mu_Fp_\wedge}{\mathfrak L^{-1}_{max}}\sum_{i=1}^l p_i^{-1}D_i(\beta_t^i,\beta^{*i})
={2\mu_Fp_\wedge}{\mathfrak L^{-1}_{max}}\mathcal{L}(\beta_t,\beta^{*}),
\end{align*} 
where in the second inequality we used relation \eqref{eq:75}, and in the last relation we used the definition of function $\mathcal L$. From \fy{the preceding two relations}, we obtain the desired inequality.
\end{proof}
 Next, we present some important properties of the self-tuned sequences. Some of them can be found in~\cite{Farzad1} and \cite{Farzad2}. The proof of the next lemma is presented in the Appendix. 
\begin{lemma}
\label{lemma:selftunedGeneral}
Let $\theta, \delta>0$ be scalars, and $\{e_t\}\geq 0$ be a sequence for $t \geq 0$, such that for an arbitrary non-negative sequence $\{\eta_t\}$
\begin{equation}
\label{eqn:seqer}
e_{t+1} := (1-\theta\eta_t) e_t+\delta\eta_t^2, \quad \hbox{for all } t\geq 1.
\end{equation}
Let $e_0\leq\frac{2\delta}{\theta^2}$ and let the self-tuned sequence $\{\eta_t^*\}$ be given by $\eta_{t}^*=\eta_{t-1}^*\left(1-\frac{\theta}{2}\eta_{t-1}^*\right)$ for any $t\geq1$, where $\eta_0^*=\frac{\theta}{2\delta}e_0$. Then the following properties hold:\begin{itemize}
\item [(a)] For any fixed $t \geq 1$, the vector $(\eta_0^*, \ldots,\eta_{t-1}^*)$ minimizes the function $e_t(\eta_0,\ldots,\eta_{t-1})$ over the set $\mathbb{U}_t\triangleq\bigg\{\gamma\in \mathbb{R}^t :0<\gamma_j\leq {1}/{\theta} ~~\text{for}~ j=1,\dots,t \bigg\}.$\\
More precisely, for $t\geq 1$, and any $(\eta_0,\ldots,\eta_{t-1}) \in \mathbb U_t$, 
\begin{align*}
e_t(\eta_0,\ldots,\eta_{t-1})-e_t(\eta_0^*,\ldots,\eta_{t-1}^*)\geq \delta(\eta_{t-1}-{\eta^*_{t-1}})^2.
\end{align*}
\item [(b)]  For all $t\geq 1$, we have 
 $\eta_t^*<\frac{2}{\theta }\left(\frac{1}{t}\right)$. Moreover, under the choice of $\eta_t:=\eta_t^*$, the term $e_t$ is bounded by $\mathcal O(1/t)$:
\begin{equation} 
\label{eq:59}
e_t(\eta_0^*, \eta_1^*,\ldots,\eta_{t-1}^*) \leq\frac{4\delta} {\theta^2}\left(\frac{1}{t}\right), \quad \hbox{for all }t \geq 1.
\end{equation}
\item [(c)]  We have $\sum_{t=0}^\infty\eta^{*}_t=\infty$ and $\sum_{t=0}^\infty\eta^{*2}_t<\infty$.
\end{itemize}
\end{lemma}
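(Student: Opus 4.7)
The self-tuned rule is exactly greedy one-step minimization of the quadratic $e_{t+1}(\eta_t) = (1-\theta\eta_t)e_t + \delta \eta_t^2$ in $\eta_t$ (the unconstrained minimizer is $\theta e_t/(2\delta)$). Writing $e_t^\star$ for $e_t(\eta_0^*,\ldots,\eta_{t-1}^*)$, my first step is to verify by induction on $t$ the invariant $\theta e_t^\star = 2\delta \eta_t^*$: at $t=0$ this is just the definition of $\eta_0^*$, and substituting the invariant into the recursion for $e_t$ gives $e_{t+1}^\star = e_t^\star - \delta(\eta_t^*)^2$, which combined with the stated self-update $\eta_{t+1}^* = \eta_t^*(1-\tfrac{\theta}{2}\eta_t^*)$ closes the induction. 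This invariant drives everything else and also makes the two bounds in (b) equivalent.

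For part (a) I would proceed by induction on $t$. The base case $t=1$ is a completing-the-square computation that actually yields equality. For the inductive step I expand the difference using the recursion for $e_{t+1}$ and add/subtract $(1-\theta\eta_t)e_t(\eta^*)$ to obtain
\begin{align*}
e_{t+1}(\eta) - e_{t+1}(\eta^*) = (1 - \theta \eta_t)[e_t(\eta) - e_t(\eta^*)] - \theta(\eta_t - \eta_t^*)e_t^\star + \delta(\eta_t^2 - (\eta_t^*)^2).
\end{align*}
Substituting $\theta e_t^\star = 2\delta \eta_t^*$ collapses the last two terms to exactly $\delta(\eta_t - \eta_t^*)^2$, and since each admissible $\eta_t \in (0,1/\theta]$ makes $1 - \theta \eta_t \geq 0$, the induction hypothesis supplies nonnegativity of the remaining term, yielding the required inequality.

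For parts (b) and (c) the cleanest handle is the reciprocal sequence $u_t := 1/\eta_t^*$, which satisfies $u_{t+1} = u_t^2/(u_t - \theta/2) = u_t + (\theta u_t/2)/(u_t - \theta/2)$. The hypothesis $e_0 \leq 2\delta/\theta^2$ together with the invariant gives $u_0 = 1/\eta_0^* \geq \theta$. A short induction then produces two-sided increments: $u_{t+1} > u_t + \theta/2$ (strict, because the fraction is positive), and $u_{t+1} - u_t \leq \theta$ whenever $u_t \geq \theta$ (since then $u_t - \theta/2 \geq u_t/2$). Iterating yields $\theta(t+2)/2 \leq u_t \leq u_0 + t\theta$, hence $1/(u_0 + t\theta) \leq \eta_t^* < 2/(\theta(t+2)) < 2/(\theta t)$, which is the $\eta_t^*$ bound in (b); the $e_t^\star$ bound follows by multiplying by $2\delta/\theta$. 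For (c), the upper bound makes $\sum(\eta_t^*)^2$ dominated by a convergent series of order $1/t^2$, while the lower bound makes $\sum \eta_t^*$ bounded below by a tail of the divergent harmonic series.

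The main obstacle I expect is not any single calculation but the coupling between (a) and (b): the induction in (a) only works when $\eta_t$ lies in $(0,1/\theta]$, and I implicitly rely on the fact that the self-tuned sequence itself respects this bound. That fact is a consequence of $u_0 \geq \theta$ and the strict monotonicity $u_{t+1} > u_t$ established in (b). I therefore need to sequence the write-up so that the admissibility $\eta_t^* \in (0,1/\theta]$ is recorded before invoking it in the optimality argument, and so that the pivotal identity $\theta e_t^\star = 2\delta \eta_t^*$, which is what makes the mixed term in (a) collapse, is established at the outset as a standalone lemma.
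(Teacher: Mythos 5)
Your proposal is correct, and for the heart of the lemma it coincides with the paper's argument: the invariant $\theta e_t^\star = 2\delta\eta_t^*$ (the paper writes it as $e_t(\eta_0^*,\ldots,\eta_{t-1}^*)=\tfrac{2\delta}{\theta}\eta_t^*$) established by induction, the same add-and-subtract decomposition of $e_{t+1}-e_{t+1}^\star$ that collapses to $\delta(\eta_t-\eta_t^*)^2$ plus a term made nonnegative by $\eta_t\le 1/\theta$, and the same reciprocal recursion $1/\eta_{t+1}^*=1/\eta_t^*+\tfrac{\theta/2}{1-\tfrac{\theta}{2}\eta_t^*}$ for the $\mathcal O(1/t)$ bound in part (b).

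Where you genuinely diverge is part (c). The paper proves $\sum_t\eta_t^*=\infty$ by contradiction: it writes $\eta_{t+1}^*=\eta_0^*\prod_{i\le t}(1-\tfrac{\theta}{2}\eta_i^*)$, notes $\eta_t^*\to 0$ forces the product to vanish, and shows summability of $\eta_t^*$ would keep the product bounded away from zero; it then gets $\sum_t\eta_t^{*2}<\infty$ from the telescoping identity $\eta_t^*=\eta_0^*-\tfrac{\theta}{2}\sum_{i<t}\eta_i^{*2}$ (which in fact gives the exact value $\tfrac{2}{\theta}\eta_0^*$ for the sum). You instead extract both conclusions from the two-sided increment bounds $\tfrac{\theta}{2}<u_{t+1}-u_t\le\theta$ on $u_t=1/\eta_t^*$, which sandwich $\eta_t^*$ between a harmonic lower bound and a $2/(\theta(t+2))$ upper bound. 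Your route is more direct and quantitative (it yields an explicit lower bound on $\eta_t^*$, which the paper never states), at the cost of needing the observation $u_t-\theta/2\ge u_t/2$; the paper's telescoping identity is slightly sharper for the square-sum. One presentational caution you already flag: record the admissibility $\eta_t^*\in(0,1/\theta]$ (equivalently, monotonicity of $u_t$ from $u_0\ge\theta$) before the part (a) induction, since the paper's ordering of parts obscures this dependence.
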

We use the following lemma \fy{in the convergence analysis.}
\begin{lemma}
\label{lm:6}\fy{[\cite{polyak1987introduction}, page 49]}
Let $\{v_t\}$ be a sequence of non-negative random variables where $\EXP{v_0}<\infty$, let $\{\alpha_t\}$ and $\{\lambda_t\}$ be deterministic scalar sequences such that
\begin{align*}
&\EXP{v_{t+1}|v_0,\ldots, v_t} \leq (1-\alpha_t)v_t + \lambda_t,\quad  \hbox{a.s. for all } t\geq 0, \\
&0 \leq \alpha_t \leq 1, \quad \lambda_t \geq0, \quad \sum_{t=0}^\infty \alpha_t =\infty, \quad \sum_{t=0}^\infty \lambda_t<\infty, \quad \frac{\lambda_t}{\alpha_t} \to 0.
\end{align*}
Then, $v_t\to 0$ a.s., $\EXP{v_t}\to 0$, and for any $\epsilon>0$ and $t>0$
	\[\text{Prob}(v_j \leq \epsilon \hbox{ for all } j\geq t)\geq 1-\frac{1}{\epsilon}\left(\EXP{v_t}+\sum_{i=t}^\infty \lambda_i\right).\]
\end{lemma}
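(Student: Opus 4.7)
The plan is to split the lemma into three claims—convergence of $\EXP{v_t}$, almost-sure convergence of $v_t$, and the uniform probability bound—each handled by a classical supermartingale-type argument attributed to the Robbins--Siegmund/Doob circle of ideas.

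First I would take unconditional expectation of the hypothesis to obtain the deterministic recursion $\EXP{v_{t+1}}\leq(1-\alpha_t)\EXP{v_t}+\lambda_t$. Unrolling gives
\begin{equation*}
\EXP{v_{t+1}}\leq \prod_{k=0}^{t}(1-\alpha_k)\,\EXP{v_0}+\sum_{k=0}^{t}\lambda_k\prod_{j=k+1}^{t}(1-\alpha_j).
\end{equation*}
Because $\sum_k \alpha_k=\infty$ and $\alpha_k\in[0,1]$, the first term vanishes as $t\to\infty$. For the second I would fix $\epsilon>0$, choose $N$ so that $\lambda_k/\alpha_k<\epsilon/2$ for $k\geq N$, split the sum at $N$, and use the standard telescoping inequality $\sum_{k=N}^{t}\alpha_k\prod_{j=k+1}^{t}(1-\alpha_j)\leq 1$ to bound the tail by $\epsilon/2$; the head $\sum_{k<N}\lambda_k\prod_{j=k+1}^{t}(1-\alpha_j)$ vanishes with $t$ since the product does. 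This yields $\EXP{v_t}\to 0$.

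Next, for almost-sure convergence I would introduce the auxiliary process $M_t:=v_t+\sum_{i=t}^{\infty}\lambda_i$ (well-defined since $\sum_i\lambda_i<\infty$) and check that
\begin{equation*}
\EXP{M_{t+1}\mid \mathcal F_t}\leq (1-\alpha_t)v_t+\sum_{i=t}^{\infty}\lambda_i=M_t-\alpha_t v_t,
\end{equation*}
so $\{M_t\}$ is a non-negative supermartingale. By Doob's convergence theorem, $M_t$ converges a.s. to a finite limit, and since $\sum_i \lambda_i<\infty$, so does $v_t$. Taking expectations of the telescoping inequality also gives $\EXP{\sum_{t}\alpha_t v_t}\leq \EXP{M_0}<\infty$, so $\sum_{t}\alpha_t v_t<\infty$ a.s. I would then identify the limit as zero by contradiction: on any event where $v_t\to v_\infty>0$, the divergence $\sum_t\alpha_t=\infty$ forces $\sum_t\alpha_t v_t=\infty$, which is impossible on a set of positive probability.

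Finally, for the probability bound I would apply Doob's maximal inequality to the same non-negative supermartingale, yielding $\text{Prob}\bigl(\sup_{j\geq t}M_j\geq \epsilon\bigr)\leq \EXP{M_t}/\epsilon=(\EXP{v_t}+\sum_{i=t}^{\infty}\lambda_i)/\epsilon$. Since $v_j\leq M_j$, the event $\{\sup_{j\geq t}M_j<\epsilon\}$ is contained in $\{v_j\leq\epsilon\ \hbox{for all}\ j\geq t\}$, so complementing produces the stated bound. I expect the most delicate step to be the passage from $\sum_t \alpha_t v_t<\infty$ a.s. together with $\sum_t\alpha_t=\infty$ to $v_t\to 0$ a.s.; the rest is bookkeeping around the supermartingale $M_t$ and the telescoping product identity.
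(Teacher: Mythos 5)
Your proof is correct. Note that the paper does not prove this lemma at all: it is imported verbatim from Polyak's \emph{Introduction to Optimization} (the citation on page 49), so there is no in-paper argument to compare against. Your three-part argument --- the deterministic unrolling with the telescoping identity $\sum_{k}\alpha_k\prod_{j>k}(1-\alpha_j)\leq 1$ for $\EXP{v_t}\to 0$, the Robbins--Siegmund-style supermartingale $M_t=v_t+\sum_{i\geq t}\lambda_i$ combined with $\sum_t\alpha_t v_t<\infty$ a.s.\ and $\sum_t\alpha_t=\infty$ for almost-sure convergence, and Doob's (Ville's) maximal inequality for the tail probability bound --- is precisely the classical route to this result, and every step you flag as delicate goes through as you describe.
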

Next, we present self-tuned stepsizes and their properties for the~(\ref{alg:BSSMD}) method.
\begin{pre}
\label{pre:3}
Let $\{\beta_t\}$ be generated by the~(\ref{alg:BSSMD}) method. Let the sets $\mathscr B_i$ be convex and closed such that $\|\beta^i\|\leq M_i$ for all $\beta^i \in \mathscr B_i$ and some $M_i>0$, for all $i$. Let Assumption \ref{ass:3} hold for some $C_i$ large enough such that $C_i^2\mathfrak L_{\omega_i}\geq 8M_i^2\mu_{\omega_i}\mu_F^2$ for all $i$. Let the stepsize $\eta_t$ be given by 
\begin{align*} &\eta_0^*:=\frac{4\mu_Fp_\wedge\sum_{i=1}^lp_i^{-1}\mathfrak L_{\omega_i} M_i^2}{\mathfrak L_{max}\sum_{i=1}^l\mu_{{\omega_i}}^{-1}C_i^2}, \\
&\eta_{t}^*:=\eta_{t-1}^*\left(1-{p_\wedge\mu_F}{\mathfrak L^{-1}_{max}}\eta_{t-1}^*\right), \quad \hbox{for all } t \geq 1.
\end{align*} Then, the following hold:
\begin{itemize}
\item [(a)] The sequence $\{\beta_t\}$ converges a.s. to the unique optimal solution $\beta^*$ of problem~(\ref{eqn:problem1}).
\item [(b)] For any $t \geq 1$, the vector $(\eta_0^*,\ldots,\eta_{t-1}^*)$ minimizes the upper bound of the error $\EXP{\mathcal L(\beta_{t},\beta^*)}$ given in Lemma \ref{lm:bsmd} for all $(\eta_0,\ldots,\eta_{t-1}) \in \left(0,\frac{\mathfrak L_{max}}{2p_\wedge\mu_F}\right]^t$. 

\item [(c)] The~\eqref{alg:BSSMD} method attains the convergence rate $\mathcal O(1/t)$, i.e, for all $t\geq 1$

$\EXP{\|\beta_t-\beta^*\|^2} \leq \frac{p_\vee}{\mu_{min}}\sum_{i=1}^l\frac{C_i^2}{\mu_{\omega_i}}\left(\frac{\mathfrak L_{max}}{p_\wedge\mu_F}\right)^2\frac{1}{t}.$

\item [(d)] Let $\epsilon$ and $\rho$ be arbitrary positive scalars and $T\triangleq 1.5\left(\frac{\mathfrak L_{max}}{p_\wedge\mu_F}\right)^2\sum_{j=1}^l\frac{C_j^2}{\mu_{\omega_j}}\frac{1}{\epsilon\rho}$ we have for all $t\geq T$
\begin{equation*}
\text{Prob}\left(\mathcal L(\beta_{j},\beta^*) \leq \epsilon \hbox{ for all } j\geq t\right)\geq 
1-\rho.
\end{equation*}
\end{itemize}
\end{pre}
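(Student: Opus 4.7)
The plan is to identify the abstract quantities of Lemma \ref{lemma:selftunedGeneral} with problem-specific ones and then cascade the three auxiliary lemmas. Set $\theta := 2\mu_F p_\wedge\mathfrak L_{max}^{-1}$ and $\delta := \sum_{i=1}^l C_i^2/(2\mu_{\omega_i})$, so that Lemma \ref{lm:bsmd} gives precisely the template recursion $\EXP{\mathcal L(\beta_{t+1},\beta^*)\mid \mathcal F_t}\leq (1-\theta\eta_t)\mathcal L(\beta_t,\beta^*)+\delta\eta_t^2$ matching \eqref{eqn:seqer}. Before invoking Lemma \ref{lemma:selftunedGeneral}, I would bound the initial error: combining the Lipschitz half of \eqref{eq:75} with $\|\beta^i\|\leq M_i$ yields the deterministic upper bound $e_0 \leq \tilde e_0 := 2\sum_{i=1}^l p_i^{-1}\mathfrak L_{\omega_i}M_i^2$. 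A short calculation exploiting $p_\wedge\leq p_i$, $\mathfrak L_{\omega_i}\leq \mathfrak L_{max}$, and the standing hypothesis $C_i^2\mathfrak L_{\omega_i}\geq 8M_i^2\mu_{\omega_i}\mu_F^2$ shows $\tilde e_0\leq 2\delta/\theta^2$, so the initial condition is met, and $\eta_0^* := \theta\tilde e_0/(2\delta)$ reproduces the explicit expression in the statement (and in particular $\eta_0^*\leq 1/\theta = \mathfrak L_{max}/(2p_\wedge\mu_F)$, whence all $\eta_t^*$ lie in $(0,1/\theta]$ by monotonicity of the self-tuned recursion).

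For part (a), I would apply Lemma \ref{lm:6} with $v_t:=\mathcal L(\beta_t,\beta^*)$, $\alpha_t:=\theta\eta_t^*$, $\lambda_t:=\delta(\eta_t^*)^2$. The bound $\alpha_t\leq 1$ is immediate from $\eta_t^*\leq 1/\theta$; Lemma \ref{lemma:selftunedGeneral}(c) supplies $\sum\alpha_t=\infty$ and $\sum\lambda_t<\infty$, while Lemma \ref{lemma:selftunedGeneral}(b) gives $\lambda_t/\alpha_t = (\delta/\theta)\eta_t^*\to 0$. Hence $\mathcal L(\beta_t,\beta^*)\to 0$ a.s.; the strong-convexity half of \eqref{eq:75} together with $p_i^{-1}\geq 1$ then yields $\|\beta_t-\beta^*\|\to 0$ a.s., and uniqueness of $\beta^*$ comes from Lemma \ref{lm:2}. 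Part (b) is a verbatim application of Lemma \ref{lemma:selftunedGeneral}(a), observing that the admissible set $\mathbb U_t$ becomes $(0,\mathfrak L_{max}/(2p_\wedge\mu_F)]^t$ under the identified $\theta$.

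For part (c), Lemma \ref{lemma:selftunedGeneral}(b) gives $\EXP{\mathcal L(\beta_t,\beta^*)}\leq 4\delta/(\theta^2 t)$. I then convert to a bound on $\EXP{\|\beta_t-\beta^*\|^2}$ by applying the strong-convexity lower bound in \eqref{eq:75} blockwise and using $p_i^{-1}\geq p_\vee^{-1}$, $\mu_{\omega_i}\geq \mu_{min}$, which yields $\mathcal L(\beta_t,\beta^*)\geq (\mu_{min}/(2p_\vee))\|\beta_t-\beta^*\|^2$; substituting the explicit $\theta,\delta$ matches the stated constant. For part (d), I would invoke the probability tail in Lemma \ref{lm:6}. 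From $\eta_t^*<2/(\theta t)$, I get $\sum_{i=t}^\infty \lambda_i \leq (4\delta/\theta^2)\sum_{i=t}^\infty 1/i^2 \leq 8\delta/(\theta^2 t)$, which combined with $\EXP{v_t}\leq 4\delta/(\theta^2 t)$ produces $\EXP{v_t}+\sum_{i=t}^\infty \lambda_i\leq 12\delta/(\theta^2 t)$. Requiring this to be $\leq \epsilon\rho$ and substituting the explicit values of $\theta,\delta$ gives exactly the threshold $T$ stated.

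The main obstacle I expect is the initial-condition bookkeeping for Lemma \ref{lemma:selftunedGeneral}: the per-coordinate hypothesis $C_i^2\mathfrak L_{\omega_i}\geq 8M_i^2\mu_{\omega_i}\mu_F^2$ must be shown to imply the aggregate inequality $\tilde e_0\leq 2\delta/\theta^2$, and this requires careful interplay between the $p_\wedge/p_i$ and $\mathfrak L_{\omega_i}/\mathfrak L_{max}$ ratios; everything else is essentially transcription of the three auxiliary lemmas with the identified $\theta$ and $\delta$.
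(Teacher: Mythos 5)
Your proposal is correct and follows essentially the same route as the paper's proof: the same identification $\theta=2\mu_F p_\wedge\mathfrak L_{max}^{-1}$, $\delta=\sum_{i=1}^l C_i^2/(2\mu_{\omega_i})$, the same initial bound $e_0\leq 2\sum_{i=1}^l p_i^{-1}\mathfrak L_{\omega_i}M_i^2$ verified against $2\delta/\theta^2$ via the per-coordinate hypothesis, and the same cascading of Lemmas \ref{lm:bsmd}, \ref{lemma:selftunedGeneral}, and \ref{lm:6}, including the identical tail estimate $12\delta/(\theta^2 t)$ for part (d). The "obstacle" you flag is handled in the paper exactly as you sketch it (their inequality \eqref{alphabound} is your chain $M_i^2\leq C_i^2\mathfrak L_{\omega_i}/(8\mu_{\omega_i}\mu_F^2)$ combined with $p_i\geq p_\wedge$ and $\mathfrak L_{\omega_i}\leq\mathfrak L_{max}$), so no gap remains.
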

\begin{proof} (a)
To show a.s. convergence, we apply Lemma \ref{lm:6}. Consider the inequality \eqref{eq:bsmd_ineq} given by Lemma \ref{lm:bsmd}. Let us define
\begin{align}\label{def:terms1prop3}
&v_t\triangleq \mathcal L (\beta_{t}, \beta^*), \quad \alpha_t\triangleq \frac{2\mu_Fp_\wedge}{\mathfrak L_{max}} \eta_t^*, \quad \lambda_t\triangleq \sum_{i=1}^l\frac{C_{i}^2}{2\mu_{\omega_{i}}}{\eta_t^*}^2.
\end{align}
From definition of $\eta_0^*$ and $C_i^2\mathfrak L_{\omega_i}\geq 8M_i^2\mu_{\omega_i}\mu_F^2$, we have
\begin{align}\label{alphabound}
&\alpha_0=\frac{8\mu_F^2p_\wedge^2\sum_{i=1}^l \frac{\mathfrak L_{\omega_i} M_i^2}{p_i}}{\mathfrak L_{max}^2\sum_{i=1}^l\frac{C_i^2}{\mu_{\omega_i}}}\leq 
\frac{p_\wedge^2\sum_{i=1}^l \frac{\mathfrak L_{\omega_i}^2C_i^2}{p_i\mu_{\omega_i}}}{\mathfrak L_{max}^2\sum_{i=1}^l\frac{C_i^2}{\mu_{\omega_i}}} \leq p_\wedge <1.
\end{align}
Therefore, since $\{\eta_t^*\}$ is non-increasing, we have $0\leq \alpha_t \leq 1$ for all $t\geq 0$. Moreover, from Lemma \ref{lemma:selftunedGeneral}(c), we have that $\sum_{t=0}^\infty \alpha_t=\infty$ and $\sum_{t=0}^\infty \lambda_t<\infty$. Also, the definition of $\alpha_t$ and $\lambda_t$ and that the self-tuned stepsize $\eta_t^*$ has a limit of zero (see proof of Lemma \ref{lemma:selftunedGeneral}, part (c) in the Appendix) imply that $\frac{\lambda_t}{\alpha_t}\to 0$. Therefore, all conditions of Lemma \ref{lm:6} are met and so $\mathcal L(\beta_t, \beta^*) \to 0$ a.s.. The definition of $\mathcal L$ and that $p_i>0$ for all $i$ \fy{imply} that $D_i(\beta_t^i, \beta^{*i}) \to 0$ for all $i$. Using the strong convexity of $\omega_i$ (cf. \eqref{eq:75}), we have $\frac{\mu_{\omega_i}}{2}\|\beta_t^i-\beta^{*i}\|^2\leq D_i(\beta_t^i, \beta^{*i}) $ for all $i$. We conclude that $\beta_t \to \beta^*$ a.s.. \\
\noindent (b) For any $t\geq 1$, let us define the function $e_{t}(\eta_0,\ldots,\eta_{t-1})$ given by the recursion \eqref{eqn:seqer} where $\theta \triangleq\frac{2p_\wedge\mu_F}{\mathfrak L_{max}}$, and $\delta\triangleq\sum_{i=1}^l\frac{C_{i}^2}{2\mu_{\omega_{i}}}$. Also, let $e_0\triangleq 2\sum_{i=1}^lp_i^{-1}\mathfrak L_{\omega_i}M_i^2$. Next, we show that $\mathcal L(\beta_0, \beta^*)\leq e_0$. Using the Lipschitizan property of $\nabla \omega_i$, and the triangle inequality, we have
\begin{align*}
\mathcal L(\beta_0, \beta^*)&=\sum_{i=1}^l\frac{D_i(\beta_0^i,\beta^{*i})}{p_i}\leq \sum_{i=1}^l\frac{\mathfrak L_{\omega_i}}{2p_i}\|\beta_0^i-\beta^{*i}\|_i^2\leq
\sum_{i=1}^l\frac{\mathfrak L_{\omega_i}}{2p_i}\fy{\left(2\|\beta_0^i\|_i^2+2\|\beta^{*i}\|_i^2\right)}\\ 
&\leq\sum_{i=1}^l\frac{2\mathfrak L_{\omega_i}M_i^2}{p_i}=e_0.
\end{align*}
From $\mathcal L(\beta_0, \beta^*)\leq e_0$, relations \eqref{eqn:seqer}, \eqref{eq:bsmd_ineq} and using induction, it can be seen that $\EXP{\mathcal L(\beta_t, \beta^*) }\leq e_{t}(\eta_0,\ldots,\eta_{t-1})$ for all $t\geq 0$ and any arbitrary $(\eta_0,\ldots,\eta_{t-1}) \in \left(0,\frac{\mathfrak L_{max}}{2p_\wedge\mu_F}\right]^t$.  Therefore, $e_{t}$ is a well-defined upper bound for the algorithm. To complete the proof, it suffices to show that the conditions of Lemma \ref{lemma:selftunedGeneral} hold. First we show that $e_0 \leq \frac{2\delta}{\theta^2}$. From the values of $e_0$, $\eta_0^*$, $\theta$, and $\delta$, we have $\eta_0^*=\frac{\theta}{2\delta}e_0$. From \ap{the} definition of $\alpha_0$ in \eqref{def:terms1prop3} and \eqref{alphabound}, we have $\alpha_0=\theta \eta_0^* <1$. By two preceding relations we obtain $e_0 \leq \frac{2\delta}{\theta^2}$. Hence, conditions of Lemma \ref{lemma:selftunedGeneral} hold. From Lemma \ref{lemma:selftunedGeneral}(a), we conclude the desired result.\\
\noindent (c) Following the proof of part (b), from Lemma \ref{lemma:selftunedGeneral}(b) and definitions of $\delta$ and $\theta$ in part (b), we obtain for all $t\geq 1$
\begin{align}\label{ineq:boundOnL}
\EXP{\mathcal L(\beta_t,\beta^*)} &\leq e_t\leq
\left(\frac{\mathfrak L_{max}}{p_\wedge\mu_F}\right)^2\sum_{i=1}^l\frac{C_i^2}{2\mu_{\omega_i}}\frac{1}{t}.
\end{align}
Note that from strong convexity of $\omega_i$ we have
\begin{align*}
&\mathcal L(\beta_t, \beta^*)=\sum\nolimits_{i=1}^lp_i^{-1}D_i(\beta_t^i,\beta^{*i})\geq \sum\nolimits_{i=1}^l p_i^{-1}0.5{\mu_{\omega_i}}\|\beta_t^i-\beta^{*i}\|_i^2\geq \mu_{min}({2p_\vee})^{-1}\|\beta_t-\beta^*\|^2.
\end{align*}
Combining the two preceding relations completes the proof.\\
\noindent (d) We use the probabilistic bound given in Lemma \ref{lm:6}. First we estimate the term $\sum_{i=t}^\infty \lambda_i$ where $\lambda_i$ is given by \eqref{def:terms1prop3}. Note that Lemma \ref{lemma:selftunedGeneral}(b) implies $\eta_i^*\leq\frac{2}{\theta i}$. Therefore, we can write 
\begin{align}
\label{eq:integral}
\sum_{i=t}^\infty \lambda_i=\sum_{i=t}^\infty\sum_{j=1}^l\frac{C_j^2}{2\mu_{\omega_j}}{\eta_i^*}^2 &\leq \sum_{j=1}^l\frac{C_j^2}{2\mu_{\omega_j}}\sum_{i=t}^\infty \left(\frac{\mathfrak L_{max}}{p_\wedge\mu_F\ i}\right)^2
 \leq \left(\frac{\mathfrak L_{max}}{p_\wedge\mu_F}\right)^2\sum_{j=1}^l\frac{C_j^2}{2\mu_{\omega_j}}\left(\frac{1}{t}+\int_{t}^\infty\frac{1}{x^2}dx\right)\notag\\
&=\left({\mathfrak L_{max}}({p_\wedge\mu_F})^{-1}\right)^2\sum_{j=1}^l{C_j^2}{\mu_{\omega_j}}^{-1}\left({1}/{t}\right).
\end{align}
By \eqref{eq:integral}, \eqref{ineq:boundOnL}, and Lemma \ref{lm:6}, we obtain the desired relation.
\end{proof}
Under a uniform distribution, i.e., $p_i=\frac{1}{l}$ for $i=1,\ldots,l$, Proposition \ref{pre:3} indicates that $\EXP{\|\beta_t-\beta^*\|^2}\to 0$ with the order of $\mathcal{O}\left(\frac{l}{t}\right)$.This is similar to the error bound derived in \cite{dang2015stochastic} for stochastic block mirror descent (SBMD) method (cf. Corollary 2.5 in \cite{dang2015stochastic}). Next, we compare the constant factor of the error bound derived in \cite{dang2015stochastic} with that of~\eqref{alg:BSSMD} method.  
\begin{com}
Let Assumption \ref{ass:3} hold for some unknown $C_i>0$ for all $i$. Let $\beta_t$ be generated by algorithm~\eqref{alg:BSSMD} where $\mathfrak L_{\omega_i}=\mathfrak L_{\omega}$ and $\mu_{\omega_i}=\mu_{\omega}$ for all $1\leq i\leq l$ and $\bar\beta_t$ be generated by SBMD method in \cite{dang2015stochastic}. Then, By Lemma \ref{lm:2}, we have $\EXP{\|\bar\beta_t-\beta^*\|^2} \leq \frac{2}{\mu_F\mu_\omega}\EXP {F(\bar\beta_t)-F(\beta^*)}$ and by Corollary 2.5 in \cite{dang2015stochastic}, we have $\EXP {F(\bar\beta_t)-F(\beta^*)} \leq \frac{2l \mathfrak L_{\omega}}{\mu_F}\sum_{i=1}^l C_i^2 \left(\frac{1}{t+1}\right)$. Combining the preceding inequalities, we obtain for all $t \geq 1$
\begin{align} 
\label{eq:lan}
\EXP{\|\bar\beta_t-\beta^*\|^2} \leq \frac{4l \mathfrak L_{\omega}}{\mu_F^2\mu_\omega}\sum_{i=1}^l C_i^2 \left(\frac{1}{t+1}\right).
\end{align}
On the other hand, by Proposition \ref{pre:3}, we have for all $t \geq 1$
\begin{align} 
\label{eq:ourbound}
\EXP{\|\beta_t-\beta^*\|^2} &\leq \frac{l\mathfrak L_{\omega}^2}{\mu_{\omega}^2\mu_F^2}\sum_{i=1}^l C_i^2 \left(\frac{1}{t+1}\right).
\end{align}
Comparing \eqref{eq:lan} and \eqref{eq:ourbound}, \ap{we} note that the constant factor of the error bound of \eqref{alg:BSSMD} method is smaller when $\frac{\mathfrak L_{\omega}}{\mu_\omega}<4$. In particular, for SGD method where $\mathfrak L_{\omega}=\mu_\omega=1$, it can be \fy{four} times better than the constant factor \fy{of} SBMD in \cite{dang2015stochastic}.
\end{com}
{
\begin{com} Proposition \ref{pre:3} states that the self-tuned stepsizes not only guarantee the convergence of the \eqref{alg:BSSMD} method, but also the constant factor provided in part (c) is the minimum for any arbitrary stepsize rule within a given range. \ap{For} example, compare this constant factor with that of the stochastic subgradient method under harmonic stepsize rules in~\cite{nemirovski2009robust}. In that work (see relations (2.9)-(2.10)), under the harmonic update rule for stepsizes given by $\eta_t=\gamma/t$ for some constant $\gamma>1/(2\mu_F)$, it is shown that 
\begin{equation}
\label{eq:58}
\EXP{\| \beta_t-\beta^*\|_2^2} \leq \max\left\{\frac{\gamma^2 C^2}{2\mu_F\gamma-1},\| \beta_0-\beta^*\|_2^2\right\}\frac{1}{t}.
\end{equation}
Let $l=1$. Here we show that for any arbitrary $\gamma>\frac{1}{2\mu_F}$, the term $\frac{\gamma^2 C^2}{2\mu_F\gamma-1}$ is larger than the constant factor of the self-tuned stepsizes that is $\left(\frac{C \mathfrak L_\omega}{\mu_\omega\mu_F}\right)^2$. Note \ap{that} in the stochastic subgradient method $\omega(\beta):=\frac{\|\beta\|_2^2}{2}$. This implies $\mu_\omega=\mathfrak L_\omega=1$. Then,
\[\frac{\text{ Harmonic constant factor}}{\text{Self-tuned constant factor}}=\frac{\gamma^2 C^2\mu_F^2}{(2\mu_F\gamma-1)C^2}=\frac{\gamma^2\mu_F^2}{2\mu_F\gamma-1}.\] 
Note that $\gamma^2\mu_F^2-2\mu_F\gamma+1=(\gamma \mu_F-1)^2>0$ for all $\gamma>\frac{1}{2\mu_F}$. Therefore, the preceding relation implies that the harmonic constant factor in~\cite{nemirovski2009robust} is larger than the self-tuned constant factor for any arbitrary $\gamma>\frac{1}{2\mu_F}$.
\end{com}
}
%\label{sec_RSS}
\subsection{Self-tuned randomized block gradient SMD method}\label{subsec:diff}
In this section, we assume the objective function in problem~\eqref{eqn:problem1} is differentiable and has Lipschitz gradients. Our goal is to utilize this property and develop a self-tuned scheme that is characterized with the problem parameters and algorithm settings. {To solve problem~\eqref{eqn:problem1}, we consider the randomized block gradient SMD method as follows
\begin{equation}\label{alg:BSGMD}\tag{RB-GSMD}
					\beta_{t+1}^{i}=\left\{
					\begin{array}{ll}
           \mathcal P_{i_t}\big(\beta_{t}^{i_t}, \eta_t g_{i_t}(\beta_t)\big) & \text{if } i = i_t,\\
           \beta_{t}^{i}& \text{if } i \neq i_t,
     \end{array} \right.
				 \end{equation} 
for all $t \geq 0$, where $g_{i_t}(\beta_t)$ is the $i_t$th block of the gradient of the stochastic function $f(\cdot,\xi_t)$ at $\beta_t$.
Throughout this section, we let $F$ have Lipschitz gradients with parameter  $\mathfrak L_F>0$. We also define the stochastic errors $z_{t}$ as follows
\begin{equation}
\label{eq:z}
z_{t}\triangleq \fy{g(\beta_{t})-\nabla F(\beta_{t})}.
\end{equation}
%\begin{equation}
%\label{eq:200}
%\beta^{*}=\underset{\beta\in \mathscr B}{\text {argmin}}\{{\eta_t}\langle \mathbf g_*, \beta-\beta^*\rangle+D_{\omega}(\beta^*,\beta)\}.
%\end{equation}
Next, we state the main assumptions on stochastic gradients.
\begin{assumption}
\label{ass:33}
The errors $z_t$ are such that a.s. we have \nm{$\EXP{z_t\mid \mathcal{F}_t}=0$} for all $t \geq 0$. Moreover, there exists some $\nu_i>0$ for all $i$ such that $ \fy{\EXP{\| z_t^{i}\|_{*i}^2|\mathcal F_t}}\leq  \nu_i^2,$ for all $t \geq 0$.
\end{assumption}
}
Next, we have the lemma that provides a recursive bound on the error of the algorithm.
\begin{lemma}\label{lem:ineqSmooth}
Let Assumption \ref{ass:33} hold and $\beta_t$ be generated by the~\eqref{alg:BSGMD} method. We have a.s. for all $t \geq 0$
\begin{align}
\label{eq:ineqSmooth}
&\EXP{\mathcal L (\beta_{t+1}, \beta^*)|\mathcal F_t} \leq(1-\eta_t{2\mu_Fp_\wedge}{\mathfrak L^{-1}_{max}}+\eta_t^2{2{\mathfrak L_F}^2p_\vee}{\mu^{-2}_{min}})\mathcal L (\beta_{t}, \beta^*)+\eta_t^2\sum\nolimits_{i=1}^l{\nu_{i}^2}{\mu^{-1}_{\omega_{i}}}.
\end{align}
\end{lemma}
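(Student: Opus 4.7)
The plan is to mirror the argument of Lemma \ref{lm:bsmd}, substituting the stochastic gradient $g_{i_t}$ for the subgradient $\tilde g_{i_t}$, and then exploiting the Lipschitz-gradient structure of $F$ to replace the absolute variance bound by a term proportional to $\mathcal L(\beta_t,\beta^*)$ plus a pure noise term.

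First, I would reuse verbatim the prox-recursion chain from the proof of Lemma \ref{lm:bsmd}: write the optimality condition for $\mathcal P_{i_t}$, apply identities \eqref{eq:73}--\eqref{eq:74}, invoke the strong convexity property \eqref{eq:75} of $\omega_{i_t}$, and use Fenchel's inequality with parameter $\mu_{\omega_{i_t}}$ to get
\begin{equation*}
D_{i_t}(\beta_{t+1}^{i_t},\beta^{i_t})\le D_{i_t}(\beta_t^{i_t},\beta^{i_t})+\eta_t\langle g_{i_t},\beta^{i_t}-\beta_t^{i_t}\rangle+\tfrac{\eta_t^2}{2\mu_{\omega_{i_t}}}\|g_{i_t}\|_{*i_t}^2.
\end{equation*}
Summing over blocks via \eqref{eq:lypanov} (using $\beta_{t+1}^i=\beta_t^i$ for $i\neq i_t$), taking conditional expectation on $\mathcal F_t\cup\{i_t\}$ and then over $i_t$, and invoking Assumption \ref{ass:33} to identify $\EXP{g(\beta_t)\mid\mathcal F_t}=\nabla F(\beta_t)$, I would obtain
\begin{equation*}
\EXP{\mathcal L(\beta_{t+1},\beta^*)\mid\mathcal F_t}\le \mathcal L(\beta_t,\beta^*)+\eta_t\langle \nabla F(\beta_t),\beta^*-\beta_t\rangle+\tfrac{\eta_t^2}{2}\sum_{i=1}^l \mu_{\omega_i}^{-1}\EXP{\|g_i(\beta_t)\|_{*i}^2\mid\mathcal F_t}.
\end{equation*}

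Next, I would treat the linear term exactly as in the proof of Lemma \ref{lm:bsmd}: strong convexity of $F$, optimality of $\beta^*$, the inequality \eqref{eq:75}, and the definition of $\mathcal L$ yield $\langle \nabla F(\beta_t),\beta_t-\beta^*\rangle \ge \tfrac{2\mu_F p_\wedge}{\mathfrak L_{max}}\mathcal L(\beta_t,\beta^*)$. For the quadratic term, writing $g(\beta_t)=\nabla F(\beta_t)+z_t$ and applying the crude bound $\|a+b\|_{*i}^2\le 2\|a\|_{*i}^2+2\|b\|_{*i}^2$ together with Assumption \ref{ass:33} gives $\EXP{\|g_i(\beta_t)\|_{*i}^2\mid\mathcal F_t}\le 2\|\nabla_i F(\beta_t)\|_{*i}^2+2\nu_i^2$. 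Using $\mu_{\omega_i}\ge \mu_{min}$ and the identity $\sum_i\|\nabla_i F(\beta_t)\|_{*i}^2 = \|\nabla F(\beta_t)\|_*^2$ (from the notation for the global dual norm), then the Lipschitz-gradient bound $\|\nabla F(\beta_t)\|_*\le \mathfrak L_F\|\beta_t-\beta^*\|$ (since $\nabla F(\beta^*)=0$), and finally \eqref{eq:75} with $p_i\le p_\vee$ to produce $\|\beta_t-\beta^*\|^2\le \tfrac{2p_\vee}{\mu_{min}}\mathcal L(\beta_t,\beta^*)$, the whole noise term collapses to $\eta_t^2\tfrac{2\mathfrak L_F^2 p_\vee}{\mu_{min}^2}\mathcal L(\beta_t,\beta^*) + \eta_t^2\sum_{i=1}^l\nu_i^2/\mu_{\omega_i}$. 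Plugging these two bounds into the first display yields \eqref{eq:ineqSmooth}.

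The main obstacle is matching the constants precisely. In particular, I have to be careful that the dual norms $\|\cdot\|_{*i}$ need not be Hilbertian, which forbids an exact inner-product expansion of $\|\nabla F+z\|_{*i}^2$ and forces the factor of two coming from $\|a+b\|^2\le 2\|a\|^2+2\|b\|^2$; this is exactly what produces the leading constant $2\mathfrak L_F^2 p_\vee/\mu_{min}^2$. A second delicate point is the bookkeeping between per-block constants $(p_i,\mu_{\omega_i})$ and their worst-case counterparts $(p_\vee,\mu_{min})$, which must be inserted at the right place so that $\mathfrak L_F^2$ is multiplied by $p_\vee/\mu_{min}^2$ rather than a weaker average.
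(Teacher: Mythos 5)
Your overall architecture (prox recursion, Fenchel, expectation over $i_t$, strong convexity for the linear term, Lipschitz gradients plus \eqref{eq:75} for the quadratic term) matches the paper's, but there is one step that genuinely fails: you bound the quadratic term by writing $g(\beta_t)=\nabla F(\beta_t)+z_t$, applying $\|a+b\|_{*i}^2\le 2\|a\|_{*i}^2+2\|b\|_{*i}^2$, and then invoking $\|\nabla F(\beta_t)\|_*\le \mathfrak L_F\|\beta_t-\beta^*\|$ ``since $\nabla F(\beta^*)=0$.'' Problem \eqref{eqn:problem1} is a \emph{constrained} problem over $\mathscr B=\prod_i\mathscr B_i$ (indeed, in Proposition \ref{pre:2} the sets $\mathscr B_i$ are bounded), so the optimality condition is only the variational inequality $\langle\nabla F(\beta^*),\beta-\beta^*\rangle\ge 0$ for $\beta\in\mathscr B$; when $\beta^*$ sits on the boundary of $\mathscr B$, $\nabla F(\beta^*)\neq 0$ in general. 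Your bound on $\|\nabla F(\beta_t)\|_*$ therefore does not hold, and if you repair it by a further split $\nabla F(\beta_t)=(\nabla F(\beta_t)-\nabla F(\beta^*))+\nabla F(\beta^*)$ you pick up a non-vanishing term of order $\eta_t^2\|\nabla F(\beta^*)\|_*^2$ that is absent from \eqref{eq:ineqSmooth}, so your route cannot deliver the stated inequality.

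The paper circumvents this by performing the add-and-subtract of $\nabla F_{i_t}(\beta^*)$ \emph{inside the inner product, before} Fenchel's inequality is applied: the Fenchel step then produces $\frac{\eta_t^2}{2\mu_{\omega_{i_t}}}\|g_{i_t}-\nabla F_{i_t}(\beta^*)\|_{*i_t}^2$, which splits as $\|(\nabla F_{i_t}(\beta_t)-\nabla F_{i_t}(\beta^*))+z_t^{i_t}\|_{*i_t}^2\le 2\|\nabla F_{i_t}(\beta_t)-\nabla F_{i_t}(\beta^*)\|_{*i_t}^2+2\|z_t^{i_t}\|_{*i_t}^2$, to which the Lipschitz property of $\nabla F$ applies directly (as a difference of gradients, no stationarity needed). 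The leftover linear term $-\eta_t\langle\nabla F(\beta^*),\beta_{t+1}-\beta^*\rangle$ is then discarded using the variational inequality at $\beta^*$ after the expectation over $i_t$ reassembles the full inner product. Your treatment of the linear term $\langle\nabla F(\beta_t),\beta_t-\beta^*\rangle$ and the final conversion of $\|\beta_t-\beta^*\|^2$ to $\mathcal L(\beta_t,\beta^*)$ via $p_\vee/\mu_{min}$ are correct and identical to the paper's; only the quadratic-term decomposition needs to be restructured as above.
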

\begin{proof}
Consider the update rule \eqref{alg:BSGMD}. Writing the first-order optimality \fy{condition}, we have for all $\beta \in \mathscr B$
\begin{equation}
\label{eq:202}
\langle{\eta_t} g_{i_t} +\nabla D_{i_t}(\beta_t^{i_t}, \beta_{t+1}^{i_t}), \beta^{i_t}-\beta_{t+1}^{i_t}\rangle \geq 0, 
\end{equation}
Using equation \eqref{eq:73}, from \eqref{eq:202} we obtain for all $\beta \in \mathscr B$
\begin{align}
\label{eq:76}
&\langle\nabla\fy{\omega_{i_t}}(\beta_{t+1}^{i_t})-\nabla\fy{\omega_{i_t}}(\beta_{t}^{i_t}), \beta^{i_t}-\beta_{t+1}^{i_t}\rangle\geq {\eta_t}\langle g_{i_t} ,\beta_{t+1}^{i_t}-\beta^{i_t}\rangle.
\end{align}
Let $\beta:=\beta^*$ in relation \eqref{eq:76}. Adding and subtracting the term $\eta_t\langle \nabla F_{i_t}(\beta^{*}) , \beta_{t+1}^{i_t} -\beta^{*{i_t}}\rangle$, we get
\begin{align}
\label{eq:204}
\langle\nabla\fy{\omega_{i_t}}(\beta_{t+1}^{i_t})-\nabla\fy{\omega_{i_t}}(\beta_{t}^{i_t}), \beta^{*i_t}-\beta_{t+1}^{i_t}\rangle  &\geq {\eta_t}\langle g_{i_t}-\nabla F({\beta^*}^{i_t}),\beta_{t+1}^{i_t}-\beta^{*i_t}\rangle\notag \\
&+\eta_t\langle \nabla F_{i_t}(\beta^{*}) , \beta_{t+1}^{i_t} -\beta^{*{i_t}}\rangle.
\end{align}
From relation \eqref{eq:74}, we get
\begin{align*}
&\langle\nabla\fy{\omega_{i_t}}(\beta_{t+1}^{i_t})-\nabla\fy{\omega_{i_t}}(\beta_{t}^{i_t}),\beta^{*{i_t}}-\beta_{t+1}^{i_t}\rangle=D_{i_t}(\beta_t^{i_t}, \beta^{*i_t})-D_{i_t}(\beta_{t+1}^{i_t}, \beta^{*i_t})-D_{i_t}(\beta_t^{i_t}, \beta_{t+1}^{i_t}).
\end{align*}
Therefore, from the preceding relation, \eqref{eq:204}, and relation \eqref{eq:75},
%\begin{equation*}
%D_\omega(\beta_t, \beta^*)-D_\omega(\beta_{t+1}, \beta^*)-D_\omega(\beta_t, \beta_{t+1}) \geq {\eta_t}\langle g_{i_t}-\nabla f(\beta^*)  ,\beta_{t+1}-\beta^*\rangle.
%\end{equation*}
%By the strong convexity of $\omega(\beta)$ and relation~(\ref{eq:75}), we get
\begin{align}
\label{eq:205}
&D_{i_t}(\beta_t^{i_t}, \beta^{*i_t})-D_{i_t}(\beta_{t+1}^{i_t}, \beta^{*i_t})-\frac{\mu_{\omega_{i_t}}}{2}\fy{\|\beta_t^{i_t}-\beta_{t+1}^{i_t}\|_{i_t}^2}-\eta_t\langle\nabla F_{i_t}(\beta^{*}) , \beta_{t+1}^{i_t} -\beta^{*{i_t}}\rangle\geq\notag\\ 
& {\eta_t}\langle g_{i_t}-\nabla F_{i_t}(\beta^{*})  ,\beta_{t+1}^{i_t}-\beta^{*i_t}\rangle
\end{align}
Next, we find a lower bound for the right-hand side term. By adding and subtracting $\eta_t\langle g_{i_t}-\nabla F_{i_t}(\beta^{*}), \beta_{t}^{i_t}\rangle$, we get
\begin{align}
\label{eq:206}
&{\eta_t}\langle g_{i_t}-\nabla F_{i_t}(\beta^{*}) ,\beta_{t+1}^{i_t}-\beta_t^{i_t}\rangle
%& \geq -\Big|\langle \frac{\eta_t}{\sqrt{\mu_{\omega_{i_t}}}}(\tilde g_t- \mathbf g_*) ,\sqrt{\mu_{\omega_{i_t}}}(\beta_{t+1}-\beta_t)\rangle\Big|+{\eta_t}\langle \tilde g_t- \mathbf g_* ,\beta_{t}-\beta^*\rangle \nonumber\\
+{\eta_t}\langle g_{i_t}-\nabla F_{i_t}(\beta^{*}),\beta_{t}^{i_t}-\beta^{*i_t}\rangle \geq  \frac{-\eta_t^2}{2\mu_{\omega_{i_t}}}\fy{\| g_{i_t}-\nabla F_{i_t}(\beta^{*}) \|^2_{*i_t}}\nonumber\\&-\frac{\mu_{\omega_{i_t}}}{2}\fy{\|\beta_{t+1}^{i_t}-\beta_{t}^{i_t}\|_{i_t}^2}+{\eta_t}\langle g_{i_t}-\nabla F_{i_t}(\beta^{*}) ,\beta_{t}^{i_t}-\beta^{*i_t}\rangle,
\end{align}
where the last inequality follows from Fenchel's inequality, i.e., $|\langle x,y\rangle|\leq \frac{1}{2}\| x\|^2+\frac{1}{2}\| y\|^2_*$.
Combining \eqref{eq:205} and \eqref{eq:206} yields
\begin{align*}
D_{i_t}(\beta_{t+1}^{i_t}, \beta^{*i_t}) &\leq D_{i_t}(\beta_t^{i_t}, \beta^{*i_t})-{\eta_t}(\langle g_{i_t}-\nabla F_{i_t}(\beta^{*}) ,\beta_{t}^{i_t}-\beta^{*i_t}\rangle+\langle \nabla F_{i_t}(\beta^{*}) , \beta_{t+1}^{i_t} -\beta^{*{i_t}}\rangle)\\&+\frac{\eta_t^2\fy{\| g_{i_t}-\nabla F_{i_t}(\beta^{*}) \|^2_{*i_t}}}{2\mu_{\omega_{i_t}}}.
\end{align*}
Using relation~(\ref{eq:z}), and invoking the triangle inequality and relation $(a+b)^2\leq2a^2+2b^2$ for any $a,b \in \mathbb R$, we obtain
\begin{align*}
D_{i_t}(\beta_{t+1}^{i_t}, \beta^{*i_t}) &\leq D_{i_t}(\beta_t^{i_t}, \beta^{*i_t})-\eta_t\langle \nabla F_{i_t}(\beta^{*}),\beta_{t+1}^{i_t}-\beta^{*{i_t}}\rangle\\
&-{\eta_t}\langle \nabla F_{i_t}(\beta_t)-\nabla F_{i_t}(\beta^{*}) \fy{+z_t^{i_t}},\beta_{t}^{i_t}-\beta^{*i_t}\rangle
+{\eta_t^2}{\mu^{-1}_{\omega_{i_t}}}\fy{\| \nabla F_{i_t}(\beta_t)-\nabla F_{i_t}(\beta^{*}) \|_{*i_t}^2}\\&+{\eta_t^2}{\mu^{-1}_{\omega_{i_t}}}\fy{\|z_t^{i_t}\|^2_{*i_t}}.
\end{align*}
From the preceding relation, the definition of the function $\mathcal L$, and that $\beta_{t+1}^{i}=\beta_t^i$ for all $i\neq i_t$, we have 
\begin{align*}
\mathcal L(\beta_{t+1}, \beta^*)&=\sum_{i\neq i_t} p_i^{-1}D_i(\beta_{t+1}^i, \beta^{*i})+ p_{i_t}^{-1}D_{i_t}(\beta_{t+1}^{i_t}, \beta^{*i_t}) \\
&\leq\mathcal L (\beta_{t}, \beta^*)+p_{i_t}^{-1}\bigg(-{\eta_t}\langle \nabla F_{i_t}(\beta_t)-\nabla F_{i_t}(\beta^{*}) +\fy{z_t^{i_t}},\beta_{t}^{i_t}-\beta^{*i_t}\rangle\\&-\eta_t\langle \nabla F_{i_t}(\beta^{*}) , \beta_{t+1}^{i_t} -\beta^{*{i_t}}\rangle+{\eta_t^2}{\mu^{-1}_{\omega_{i_t}}}\| \nabla F_{i_t}(\beta_t)-\nabla F_{i_t}(\beta^{*}) \|^2_{*i_t}+{\eta_t^2}{\mu^{-1}_{\omega_{i_t}}}\|\fy{z_t^{i_t}}\|^2_{*i_t}\bigg).
\end{align*}
%Under Lipschitzian property of $\nabla F$ with parameter $\mathfrak L_F$, and strong convexity of $F$ with parameter $\mu_F$, we get
%\begin{align*}
%&\mathcal L(\beta_{t+1}, \beta^*)\leq\mathcal L (\beta_{t}, \beta^*)+p_{i_t}^{-1}\bigg(-{\eta_t\mu_F}\|\beta_{t}^{i_t}-\beta^{*i_t}\|^2\\
%&+\frac{\eta_t^2\mathfrak L_F^2}{\mu_{\omega_{i_t}}}\|\beta_{t}^{i_t}-\beta^{^{*i_t}}\|^2
%-{\eta_t} \langle z_t,\beta_{t}^{i_t}-\beta^{*i_t}\rangle+\frac{\eta_t^2}{\mu_{\omega_{i_t}}}\|z_t\|^2_*\bigg).
%\end{align*}
Taking conditional expectations from both sides of the preceding relation on $\mathcal F_t\cup \{i_t\}$, we get 
\begin{align*}
\EXP{\mathcal L(\beta_{t+1}, \beta^*)\mid \mathcal F_t\cup \{i_t\}} &\leq \mathcal L (\beta_{t}, \beta^*)+p_{i_t}^{-1}\eta_t(-\langle \nabla F_{i_t}(\beta^{*}),\beta_{t+1}^{i_t} -\beta^{*{i_t}}\rangle \\&+\left\langle \EXP{\fy{z_t^{i_t}}\mid \mathcal F_t\cup \{i_t\}},\beta^{*i_t}-\beta_t^{i_t}\right\rangle)\\
&+p_{i_t}^{-1}{\eta_t}\langle \nabla F_{i_t}(\beta_t)-\nabla F_{i_t}(\beta^{*}),\beta^{*i_t}-\beta_{t}^{i_t}\rangle\\&+p_{i_t}^{-1}{\eta_t^2}{\mu^{-1}_{\omega_{i_t}}}\left(\| \nabla F_{i_t}(\beta_t)-\nabla F_{i_t}(\beta^{*}) \|^2_{*i_t}+\EXP{\|\fy{z_t^{i_t}}  \|^2_{*i_t}\mid \mathcal F_t\cup \{i_t\}}\right).
\end{align*} 
Assumption \ref{ass:33} implies that $\EXP{\fy{z_t^{i_t}}\mid \mathcal F_t}=0$. Using that and the bound provided in Assumption \ref{ass:33}, we obtain 
\begin{align*}
\EXP{\mathcal L (\beta_{t+1}, \beta^*)\mid \mathcal F_t\cup \{i_t\}}&\leq \mathcal L (\beta_{t}, \beta^*)-p_{i_t}^{-1}\eta_t(\langle \nabla F_{i_t}(\beta^{*}),\beta_{t+1}^{i_t} -\beta^{*{i_t}}\rangle\\&+\langle \nabla F_{i_t}(\beta_t)-\nabla F_{i_t}(\beta^{*}),\beta^{*i_t}-\beta_{t}^{i_t}\rangle)\\
&+p_{i_t}^{-1} {\eta_t^2}{\mu^{-1}_{\omega_{i_t}}}\left(\| \nabla F_{i_t}(\beta_t)-\nabla F_{i_t}(\beta^{*}) \|^2_{*i_t}
+{\nu_{i_t}^2}\right).
\end{align*} 
Next, taking expectations with respect to $i_t$, we obtain
\begin{align*}
\EXP{\mathcal L (\beta_{t+1}, \beta^*)\mid \mathcal F_t} &\leq \mathcal L (\beta_{t}, \beta^*)+\eta_t
\left(\left\langle \nabla F(\beta_t)-\nabla F(\beta^{*}) ,\beta^*-\beta_t\right\rangle-\langle \nabla F(\beta^{*}) , \beta_{t+1}-\beta^{*}\rangle\right) \notag\\
&+{\eta_t^2}{\mu^{-1}_{min}}\| \nabla F(\beta_t)-\nabla F(\beta^{*}) \|^2_*
+\eta_t^2\sum\nolimits_{i=1}^l {\nu_{i}^2}{\mu^{-1}_{\omega_i}},
\end{align*} 
where we use the definition of $\left\langle \cdot, \cdot\right\rangle$ given in the notation. 
Using \fy{the optimality condition} for problem~\eqref{eqn:problem1} and under \ap{the} Lipschitzian property of $\nabla F$ and strong convexity of $F$,
\begin{align}
\label{ineq:lemma6ineq1}
\EXP{\mathcal L (\beta_{t+1}, \beta^*)\mid \mathcal F_t} &\leq \mathcal L (\beta_{t}, \beta^*)-{\eta_t\mu_F}\|\beta_{t}-\beta^*\|^2+{\eta_t^2\mathfrak L_F^2}{\mu_{min}}^{-1}\|\beta_{t}-\beta^*\|^2\notag\\&+\eta_t^2\sum\nolimits_{i=1}^l{\nu_{i}^2}{\mu_{\omega_i}}^{-1}.
\end{align}
From the definition of norm $\|\cdot\|$, we can write
\begin{align*}
\|\beta_{t}-\beta^*\|^2=\sum\nolimits_{i=1}^l\|\beta_t^i-\beta^{*i}\|_i^2\geq 2\sum\nolimits_{i=1}^l {D_i(\beta_t^i,\beta^{*i})}{\mathfrak L^{-1}_{\omega_i}}&\geq {2p_\wedge}{\mathfrak L^{-1}_{max}}\sum\nolimits_{i=1}^l p_i^{-1}D_i(\beta_t^i,\beta^{*i})\notag\\
&={2p_\wedge}{\mathfrak L^{-1}_{max}}\mathcal{L}(\beta_t,\beta^{*}),
\end{align*} 
where in the first inequality we used relation \eqref{eq:75}, and in the last relation we used the definition of function $\mathcal L$. 
Similarly,
\begin{align*}
%\label{eq:akhar}
&\|\beta_{t}-\beta^*\|^2\leq 2\sum_{i=1}^l\frac{D_i(\beta_t^i,\beta^{*i})}{\mu_{\omega_i}}\leq\frac{2p_\vee}{\mu_{min}}\mathcal{L}(\beta_t,\beta^{*}),
\end{align*} 
From \fy{the last three relations}, we obtain the desired inequality.
%Note that since $\eta_t\leq \frac{\mu_F \mu_\omega^2}{2\mathfrak L_f^2\mathfrak L_\omega }$, we have $\frac{2\eta_t^2\mathfrak L_f^2}{\mu_\omega^2}\leq \frac{\eta_t\mu_F}{\mathfrak L_\omega}$. Therefore, we obtain for all $t\geq 0$
%\begin{eqnarray*}
%\EXP{D_\omega(\beta_{t+1}, \beta^*)\mid \mathcal F_t} \leq \left(1-\frac{\eta_t\mu_F}{\mathfrak L_\omega}\right)D_\omega(\beta_t, \beta^*)+\frac{\nu^2\eta_t^2}{\mu_\omega}.
%\end{eqnarray*}
\end{proof}
\begin{comment}
Inequality \eqref{eq:ineqSmooth} provides a closed-form function for an upper bound of the error of the \eqref{alg:BSGMD} scheme. Comparing this relation with the result of Lemma \ref{lm:bsmd}, we observe that the inequalities differ from two aspects: (i) the \textit{contraction term} multiplied by the term $\mathcal L_\omega(\beta_t,\beta^*)$ in the nonsmooth case is smaller than that in the smooth case; (ii) the upper bound in the smooth case is independent of the bound on the gradient, i.e., constant $C_i$. Instead the relation is characterized by the bound on the stochastic errors, that is denoted by $\nu_i$. 
\end{comment}
Next, we present self-tuned stepsizes for the \eqref{alg:BSGMD} method and show their properties.
\begin{pre}
\label{pre:2}
Let $\{\beta_t\}$ be generated by the~\eqref{alg:BSGMD} method. Let the set $\mathscr B_i$ be convex and closed such that $\|\beta^i\|\leq M_i$ for all $\beta_i \in \mathscr B_i$ and some $M_i>0$. Let Assumption \ref{ass:33} hold for some $\nu_i>0$, and the stepsize $\eta_t$ be given by 
\begin{align*} 
&\eta_0^*:=\frac{4\mu_Fp_\wedge\sum_{i=1}^lp_i^{-1}\mathfrak L_{\omega_i} M_i^2}{\mathfrak L_{max} \left(\frac{8{\mathfrak L_F}^2p_\vee}{{\mu_{min}}^2}\sum_{i=1}^l p_i^{-1}\mathfrak L_{\omega_i} M_i^2+\sum_{i=1}^l\frac{2\nu_{i}^2}{\mu_{\omega_{i}}}\right)},
\end{align*} 
\begin{align*}
&\eta_{t}^*:=\eta_{t-1}^*\left(1-{p_\wedge\mu_F}{\mathfrak L^{-1}_{max}}\eta_{t-1}^*\right), \quad \hbox{for all } t \geq 1.
\end{align*}
Then, the following hold:
\begin{itemize}
\item [(a)] The sequence $\{\beta_t\}$ generated by the \eqref{alg:BSGMD} method converges a.s. to the unique optimal solution $\beta^*$ of problem~(\ref{eqn:problem1}).
\item [(b)] For any $t \geq 1$, the vector $(\eta_0^*, \eta_1^*,\ldots,\eta_{t-1}^*)$ minimizes the upper bound of the error $\EXP{D_\omega(\beta_{t},\beta^*)}$ given in Lemma \ref{lem:ineqSmooth} for all $(\eta_0, \eta_1,\ldots,\eta_{t-1}) \in \left(0,\frac{\mathfrak L_{max}}{2\mu_F p_\wedge}\right]^t$. 

\item [(c)] The \eqref{alg:BSGMD} method attains the convergence rate $\mathcal O(1/t)$, i.e, for all $t\geq 1$
\begin{align*}
&\EXP{\|\beta_t-\beta^*\|^2} \leq 2\left(\frac{ \mathfrak L_{max}}{p_\wedge\mu_F}\right)^2\left(4\frac{{\mathfrak L_F}^2p_\vee}{{\mu_{min}}^2}\sum_{i=1}^l p_i^{-1}\mathfrak L_{\omega_i} M_i^2+\sum_{i=1}^l\frac{\nu_{i}^2}{\mu_{\omega_{i}}}\right)\frac{1}{t}.
 \end{align*}

\item [(d)] Let $\epsilon$ and $\rho$ be arbitrary positive scalars and $T\triangleq \frac{1.5}{\epsilon\rho}\left(\frac{\mathfrak L_{max}}{p_\wedge\mu_F}\right)^2\times$\\$\left(\frac{{8\mathfrak L_F}^2p_\vee}{{\mu_{min}}^2}\sum_{j=1}^l p_j^{-1}\mathfrak L_{\omega_j} M_j^2+\sum_{j=1}^l\frac{2\nu_{j}^2}{\mu_{\omega_{j}}}\right)$ we have for all $t\geq T$
\begin{equation*}
\text{Prob}\left(\mathcal L(\beta_{j},\beta^*) \leq \epsilon \hbox{ for all } j\geq t\right)\geq 
1-\rho.
\end{equation*}
\end{itemize}
\end{pre}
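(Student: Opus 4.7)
The strategy is to reduce the recursion of Lemma \ref{lem:ineqSmooth} to the canonical self-tuned form used in Proposition \ref{pre:3}, so that Lemmas \ref{lemma:selftunedGeneral} and \ref{lm:6} apply directly. Define the constants $\theta := 2\mu_F p_\wedge/\mathfrak L_{max}$, $\theta_2 := 2\mathfrak L_F^2 p_\vee/\mu_{min}^2$, and $\delta_0 := \sum_{i=1}^l \nu_i^2/\mu_{\omega_i}$; with $v_t := \mathcal L(\beta_t,\beta^*)$, inequality \eqref{eq:ineqSmooth} reads
\begin{align*}
\EXP{v_{t+1}\mid \mathcal F_t}\leq (1-\theta\eta_t+\theta_2\eta_t^2)v_t+\delta_0\eta_t^2.
\end{align*}
Unlike the nonsmooth case, this contains the extra $\theta_2\eta_t^2 v_t$ term. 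Since the prox step keeps $\beta_t\in\mathscr B$, \eqref{eq:75} together with $\|\beta_t^i-\beta^{*i}\|_i^2\leq 4M_i^2$ yields the uniform deterministic bound $v_t\leq \sum_{i=1}^l 2p_i^{-1}\mathfrak L_{\omega_i}M_i^2 =: e_0$ a.s.\ for all $t\geq 0$. Absorbing the quadratic term into the additive noise via this bound and setting $\delta := \theta_2 e_0+\delta_0$ yields the simplified a.s.\ recursion $\EXP{v_{t+1}\mid \mathcal F_t}\leq (1-\theta\eta_t)v_t+\delta\eta_t^2$ for $\eta_t\in(0,1/\theta]$.

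An algebraic check confirms that the prescribed $\eta_0^*$ equals $(\theta/(2\delta))e_0$ and the update rule for $\eta_t^*$ coincides with $\eta_{t-1}^*(1-(\theta/2)\eta_{t-1}^*)$, matching the self-tuned initialization of Lemma \ref{lemma:selftunedGeneral}. The condition $e_0\leq 2\delta/\theta^2$ reduces to $\theta^2\leq 2\theta_2$, which follows unconditionally from $\mathfrak L_F\geq \mu_F$, $p_\vee\geq p_\wedge^2$, and $\mathfrak L_{max}\geq \mu_{min}$. With this setup, the four parts are transcriptions of Proposition \ref{pre:3}. For part (a), apply Lemma \ref{lm:6} with $\alpha_t:=\theta\eta_t^*$ and $\lambda_t:=\delta\eta_t^{*2}$; the hypotheses on $\alpha_t,\lambda_t$ follow from $\eta_0^*\leq 1/\theta$ and Lemma \ref{lemma:selftunedGeneral}(b)-(c), giving $v_t\to 0$ a.s.\ and hence $\beta_t\to\beta^*$ via strong convexity of $\omega_i$. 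For part (b), define $e_t(\eta_0,\ldots,\eta_{t-1})$ via \eqref{eqn:seqer} with the above $\theta,\delta$; induction (taking expectations and using $v_t\leq e_0$ to absorb the $\theta_2\eta_t^2 v_t$ term, with $1-\theta\eta_t\geq 0$ on the prescribed range) yields $\EXP{v_t}\leq e_t(\eta_0,\ldots,\eta_{t-1})$ for every stepsize sequence in $(0,1/\theta]^t$, and the minimization then follows from Lemma \ref{lemma:selftunedGeneral}(a).

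For part (c), Lemma \ref{lemma:selftunedGeneral}(b) yields $e_t\leq 4\delta/(\theta^2 t)$; combining with the lower bound $\mathcal L(\beta_t,\beta^*)\geq (\mu_{min}/(2p_\vee))\|\beta_t-\beta^*\|^2$ (from strong convexity of $\omega_i$) and substituting the values of $\theta$ and $\delta$ produces the stated $\mathcal O(1/t)$ bound on $\EXP{\|\beta_t-\beta^*\|^2}$. Part (d) mirrors Proposition \ref{pre:3}(d) verbatim: bound $\sum_{i=t}^\infty \lambda_i = \mathcal O(1/t)$ via $\eta_i^*\leq 2/(\theta i)$ and an integral comparison, then invoke the probabilistic tail estimate in Lemma \ref{lm:6}. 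The main obstacle is precisely the extra $\theta_2\eta_t^2 v_t$ term: one must pick the correct surrogate $\delta$ so that the prescribed $\eta_0^*$ truly coincides with $(\theta/(2\delta))e_0$ and the condition $e_0\leq 2\delta/\theta^2$ holds \emph{without} imposing an auxiliary largeness condition on $\nu_i$ (the analog of the hypothesis $C_i^2\mathfrak L_{\omega_i}\geq 8M_i^2\mu_{\omega_i}\mu_F^2$ used in Proposition \ref{pre:3}). The a.s.\ bound $v_t\leq e_0$ supplied by the compactness of $\mathscr B$ is exactly what eliminates that requirement; once it is in hand, the remainder is a routine transcription of the nonsmooth-case argument.
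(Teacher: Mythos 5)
Your proposal is correct and follows essentially the same route as the paper: bound $\mathcal L(\beta_t,\beta^*)$ deterministically by $e_0=2\sum_{i}p_i^{-1}\mathfrak L_{\omega_i}M_i^2$ using the boundedness of $\mathscr B_i$, absorb the quadratic term $2\mathfrak L_F^2 p_\vee\mu_{min}^{-2}\eta_t^2\mathcal L(\beta_t,\beta^*)$ into the additive noise so that the recursion becomes the one of Proposition \ref{pre:3} with $\sum_i C_i^2/\mu_{\omega_i}$ replaced by $\bar C^2$, and then verify that the prescribed $\eta_0^*$ equals $(\theta/(2\delta))e_0$ and that $e_0\le 2\delta/\theta^2$ holds automatically (your check $\theta^2\le 2\theta_2$ is exactly the paper's verification that $\alpha_0<1$ via $\mu_F\le\mathfrak L_F$, $p_\wedge^2\le p_\vee$, $\mu_{min}\le\mathfrak L_{max}$). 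Your version is, if anything, slightly more careful in keeping the absorbed recursion at the conditional-expectation level, which is what Lemma \ref{lm:6} actually requires in part (a); note only that the bound your derivation yields in part (c) carries the factor $p_\vee/\mu_{min}$ from $\mathcal L(\beta_t,\beta^*)\ge(\mu_{min}/2p_\vee)\|\beta_t-\beta^*\|^2$, consistent with Proposition \ref{pre:3}(c) under the substitution $C^2\mapsto\bar C^2$.
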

\begin{proof}
Consider relation \eqref{eq:ineqSmooth}. Taking expectations from both sides and rearranging the terms, we can write 
\begin{align*}
\EXP{\mathcal L (\beta_{t+1}, \beta^*)}&\leq \left(1-\eta_t{2\mu_Fp_\wedge}{\mathfrak L^{-1}_{max}}\right)\EXP{\mathcal L (\beta_{t}, \beta^*)}+2\eta_t^2{{\mathfrak L_F}^2p_\vee}{{\mu^{-2}_{min}}}\EXP{\mathcal L (\beta_{t}, \beta^*)}\\
&+\eta_t^2\sum\nolimits_{i=1}^l{\nu_{i}^2}{\mu^{-1}_{\omega_{i}}}.
\end{align*}
From relation \eqref{eq:75}, and \fy{the} triangle inequality, we have 
\begin{align*}
\mathcal L(\beta_t, \beta^*)&  \leq \sum_{i=1}^l p_i^{-1} \frac{\mathfrak L_{\omega_i}}{2}\fy{\|\beta_t^i-\beta^{*i}\|_{i}^2} \leq 2\sum_{i=1}^l p_i^{-1}\mathfrak L_{\omega_i} M_i^2.
\end{align*}
From the preceding inequalities, we obtain
\begin{align*}
&\EXP{\mathcal L (\beta_{t+1}, \beta^*)}\leq \left(1-\eta_t{2\mu_Fp_\wedge}{\mathfrak L^{-1}_{max}}\right)\EXP{\mathcal L (\beta_{t}, \beta^*)}+\left(\frac{8{\mathfrak L_F}^2p_\vee}{{\mu_{min}}^2}\sum_{i=1}^l p_i^{-1}\mathfrak L_{\omega_i} M_i^2+\sum_{i=1}^l\frac{2\nu_{i}^2}{\mu_{\omega_{i}}}\right)\frac{1}{2}\eta_t^2.
\end{align*}
Let us define $C^2\triangleq{\sum_{i=1}^l {\mu_{\omega_i}}^{-1} C_i^2}$ and $\bar C^2$ such that $\bar C^2\triangleq \frac{8{\mathfrak L_F}^2p_\vee}{{\mu_{min}}^2}\sum_{i=1}^l p_i^{-1}\mathfrak L_{\omega_i} M_i^2+\sum_{i=1}^l\frac{2\nu_{i}^2}{\mu_{\omega_{i}}}$. Note that the preceding inequality is similar to the relation \eqref{eq:bsmd_ineq}, where $C^2$ is replaced by the term $\bar C^2$. Therefore, the desired results here follow by only substituting $C^2$ by $\bar C^2$ in Proposition \ref{pre:3}. \ap{It only remains to show} that: (i) $\eta_0^*=\frac{4\mu_F p_\wedge\sum_{i=1}^lp_i^{-1}\mathfrak L_{\omega_i} M_i^2}{\mathfrak L_{max} \bar C^2}$, and (ii) the conditions of Proposition \ref{pre:3} also hold for $\alpha_0$. The relation (i) holds directly from definition of $\eta_0^*$ given by Proposition \ref{pre:2} and the definition of $\bar C$. To show (ii), we need to verify that $\alpha_0<1$.  From definition of $\alpha_0$ \fy{given by \eqref{def:terms1prop3}}, we have
\begin{align*}
\alpha_0&={2\mu_F p_\wedge}{\mathfrak L^{-1}_{max}}\eta_0^*={2\mu_F p_\wedge}{\mathfrak L^{-1}_{max}}\times\frac{4\mu_Fp_\wedge\sum_{i=1}^lp_i^{-1}\mathfrak L_{\omega_i} M_i^2}{\mathfrak L_{max}\left(\frac{8{\mathfrak L_F}^2p_\vee}{{\mu_{min}}^2}\sum_{i=1}^l p_i^{-1}\mathfrak L_{\omega_i} M_i^2+\sum_{i=1}^l\frac{2\nu_{i}^2}{\mu_{\omega_{i}}}\right)}
\\&
=\frac{\mu_F^2 p^2_{\wedge}}{\mathfrak L^2_{max}}\times\frac{\sum_{i=1}^l p_i^{-1}\mathfrak L_{\omega_i} M_i^2}{\left(\frac{{\mathfrak L_F}^2p_\vee}{{\mu_{min}}^2}\sum_{i=1}^l p_i^{-1}\mathfrak L_{\omega_i} M_i^2\right)}=\frac{\mu^2_F }{\mathfrak L^2_F}\frac{p^2_{\wedge}}{p_\vee} \frac{\mu^2_{min}}{\mathfrak L^2_{max}}<1\ap{,}
\end{align*}
where the last relation follows since $\mu_F \leq \mathfrak L_F$ and $\mu_{\omega_i} \leq \mathfrak L_{\omega_i}$. Therefore, the conditions of Proposition \ref{pre:3} hold for $\alpha_0$ and the desired results follow.
\end{proof}
\subsection{Unifying self-tuned stepsizes}
\label{subsec:unifying}
Recall that Propositions \ref{pre:3} and \ref{pre:2} provide self-tuned stepsize rules for the case where problem \eqref{eqn:problem1} is nonsmooth and smooth, respectively. These update rules are characterized in terms of problem parameters such as $C_i,\nu_i,\mu_F$ and algorithm settings such as $\mu_{\omega_i}, \mathfrak L_{\omega_i}$. An important question is how we may employ such self-tuned stepsize rules \fy{when} some of the problem parameters are not known in advance, or are challenging to estimate? Here, our goal is to develop a unifying class of self-tuned stepsize rules that can be employed in both smooth and nonsmooth cases when some of the problem parameters are unavailable. Assume that $l=1$. Let us compare the stepsize rules in Propositions \ref{pre:3} and \ref{pre:2}. We observe that although the initial stepsize $\eta_0^*$ is different, both schemes share the same tuning rule. We also observe that the only problem parameter that is needed to be known for the tuning update rule is $\mu_F$. This parameter is known in advance in many applications such as SVM. Note that $\mathfrak L_\omega$ is the Lipschitzian parameter associated with the prox mapping and depends on the choice of the distance generating function $\omega$. This function is user-specified. 
%For example, for stochastic subgradient/gradient methods we set $\omega(\beta):=\frac{1}{2}\|\beta\|_2^2$, and therefore $\mathfrak L_\omega=1$. 
In practice, when problem parameters such as $M,C$, or $\mathfrak L_F$ are unavailable or difficult to estimate, the initial stepsize $\eta_0^*$ cannot be evaluated. In such cases one may choose $\eta_0^*$ arbitrarily and still use the update rule $\eta_{t}^*:=\eta_{t-1}^*\left(1-(\mu_F/\mathfrak L_\omega)\eta_{t-1}^* \right)$. We show that even under this relaxation, some of the main properties of the self-tuned stepsizes are preserved by the following result.
\begin{thm}
\label{thm:1}[Unifying self-tuned stepsize rules]
Consider problem \eqref{eqn:problem1} \fy{with $l=1$}. Let the set $\mathscr B$ be convex, closed, and bounded. Suppose either of the following cases holds: \\
\textbf{case (1)}: $F$ is non-differentiable and Assumption \ref{ass:3} holds for some unknown $C>0$. \\
\textbf{case (2)}: $F$ is continuously differentiable over $\mathscr B$ for all $\xi$, but $\nabla F$ is not Lipschitz over $\mathscr B$ and Assumption \ref{ass:33} holds. \\
\textbf{case (3)}: $F$ is differentiable over $\mathscr B$, it has Lipschitz gradients with an unknown parameter $\mathfrak L_F$, and Assumption \ref{ass:33} holds.\\
In \fy{cases (1) and (2)}, let $\{\beta_t\}$ be generated by algorithm~\eqref{alg:BSSMD}. In \fy{case (3),} let $\{\beta_t\}$ be generated by algorithm~\eqref{alg:BSGMD}. In all cases, let the stepsize $\eta_t$ be given by $\eta_{t}:=\eta_{t-1}\left(1-({\mu_F}/{\mathfrak L_{\omega})}\eta_{t-1}\right)$ for all $t\geq1$, where $0<\eta_0\leq{\mathfrak L_{\omega}}/{2\mu_F}$ is an arbitrary constant.
%(a) If all the problem parameters (including $\mu_F$) are unknown, there exists a threshold $\bar{\eta}$ such that for any $\eta_0<\bar{\eta}$, the stepsize sequence $\eta_{t}^*=\eta_{t-1}^*(1-c\eta_{t-1}^*)~~~\forall t\geq1$ with scalar $c>0$ minimizes an upper bound of the expected error of the %algorithm~(\ref{eqn:SSMD}).
Then: (i) $\{\beta_t\}$ converges to $\beta^*$ a.s., and (ii) there exists a threshold $\bar{\eta}\leq{\mathfrak L_{\omega}}/{2\mu_F}$ such that for any $\eta_0\leq\bar{\eta}$, an upper bound of the error $\EXP{D_\omega(\beta_{t},\beta^*)}$ is minimized for all $(\eta_0, \ldots,\eta_{t-1}) \in \left(0,{\mathfrak L_{\omega}}/{2\mu_F}\right]^t$.
\end{thm}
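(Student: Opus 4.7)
The plan is to reduce all three cases to a single recursive inequality of the form
$\EXP{D_\omega(\beta_{t+1},\beta^*)\mid \mathcal F_t}\leq (1-\theta\eta_t) D_\omega(\beta_t,\beta^*)+\delta\eta_t^2$
with $\theta:=2\mu_F/\mathfrak L_\omega$ and a case-dependent constant $\delta>0$, and then to invoke Lemma \ref{lm:6} for part (i) and Lemma \ref{lemma:selftunedGeneral} for part (ii). In case (1), Lemma \ref{lm:bsmd} with $l=1$ gives this directly with $\delta=C^2/(2\mu_\omega)$. In case (2), continuous differentiability of $F$ on the compact set $\mathscr B$ makes $\nabla F$ bounded on $\mathscr B$; combined with Assumption \ref{ass:33} and the inequality $\|g\|_*^2\leq 2\|\nabla F\|_*^2+2\|z\|_*^2$, this upgrades to Assumption \ref{ass:3} with a finite $C$, so the case-(1) derivation applies verbatim. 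In case (3), Lemma \ref{lem:ineqSmooth} with $l=1$ yields the extra term $2(\mathfrak L_F^2/\mu_\omega^2)\eta_t^2 D_\omega(\beta_t,\beta^*)$; using the bound $D_\omega(\beta_t,\beta^*)\leq 2\mathfrak L_\omega M^2$ implied by boundedness of $\mathscr B$ and \eqref{eq:75}, I absorb this contribution into an effective $\bar\delta:=4\mathfrak L_F^2\mathfrak L_\omega M^2/\mu_\omega^2+\nu^2/\mu_\omega$, preserving the standard form without needing to know $\mathfrak L_F$ for running the scheme.

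For part (i), I apply Lemma \ref{lm:6} with $v_t:=D_\omega(\beta_t,\beta^*)$, $\alpha_t:=\theta\eta_t$, and $\lambda_t:=\delta\eta_t^2$ (replacing $\delta$ by $\bar\delta$ in case (3)). The key fact is that, for any initial $\eta_0\in(0,\mathfrak L_\omega/(2\mu_F)]$, the recursion $\eta_t=\eta_{t-1}(1-(\theta/2)\eta_{t-1})$ produces a positive, non-increasing sequence with $\eta_t=\Theta(1/t)$. This extends Lemma \ref{lemma:selftunedGeneral}(b)--(c) to arbitrary initial conditions via the reciprocal identity $1/\eta_{t+1}-1/\eta_t=(\theta/2)/(1-(\theta/2)\eta_t)$, which forces $1/\eta_t=\Theta(t)$ irrespective of $1/\eta_0$. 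All hypotheses of Lemma \ref{lm:6} then follow: $\alpha_t\leq 1$ from $\eta_t\leq 1/\theta$, $\sum_t\alpha_t=\infty$ and $\sum_t\lambda_t<\infty$ from the $\Theta(1/t)$ rate, and $\lambda_t/\alpha_t=(\delta/\theta)\eta_t\to 0$ trivially. Strong convexity of $\omega$ via \eqref{eq:75} then lifts $D_\omega(\beta_t,\beta^*)\to 0$ a.s.\ to $\beta_t\to\beta^*$ a.s.

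For part (ii), I construct the upper-bound sequence $\{e_t\}$ by $e_{t+1}:=(1-\theta\eta_t)e_t+\delta\eta_t^2$ with the key identification $e_0:=(2\delta/\theta)\eta_0$. Since $\eta_0\leq \mathfrak L_\omega/(2\mu_F)=1/\theta$ gives $e_0\leq 2\delta/\theta^2$, and since by construction $\eta_0=(\theta/(2\delta))e_0$, the sequence $\{\eta_t\}$ generated by the prescribed update rule coincides with the self-tuned minimizer of Lemma \ref{lemma:selftunedGeneral}(a); hence it minimizes $e_t(\gamma_0,\ldots,\gamma_{t-1})$ over $(0,\mathfrak L_\omega/(2\mu_F)]^t$. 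The threshold $\bar\eta$ is then calibrated so that $e_t$ genuinely upper-bounds $\EXP{D_\omega(\beta_t,\beta^*)}$; by comparing the two recursions inductively, this reduces to the compatibility condition $e_0\geq D_\omega(\beta_0,\beta^*)$. This compatibility is the main obstacle: the identification $e_0=(2\delta/\theta)\eta_0$ couples the validity of the bound to $\eta_0$, so the argument must exploit the uniform estimate $D_\omega(\beta_0,\beta^*)\leq 2\mathfrak L_\omega M^2$ (from compactness) to pick $\bar\eta\leq \mathfrak L_\omega/(2\mu_F)$ small enough that the induced $e_t$ dominates the true error throughout, while still realizing the minimization claim of Lemma \ref{lemma:selftunedGeneral}(a) on the full feasible stepsize range.
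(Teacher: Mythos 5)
Your reduction of all three cases to the single recursion $e_{t+1}=(1-\theta\eta_t)e_t+\delta\eta_t^2$ with $\theta=2\mu_F/\mathfrak L_\omega$ is sound, and your part (i) is correct and in fact more direct than the paper's: you verify the hypotheses of Lemma \ref{lm:6} for an arbitrary $\eta_0\in(0,1/\theta]$ via the reciprocal identity, whereas the paper routes even the almost-sure convergence claim through Propositions \ref{pre:3} and \ref{pre:2}. Your handling of case (2) (compactness plus continuity upgrades to Assumption \ref{ass:3}) and case (3) (absorbing the $\eta_t^2\mathcal L$ term into an effective $\bar\delta$ that is needed only in the analysis, not to run the scheme) also matches the paper.

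The gap is in part (ii), and it sits exactly at the point you flag as ``the main obstacle'' --- but your proposed fix points in the wrong direction. To invoke Lemma \ref{lemma:selftunedGeneral}(a) with the \emph{given} $\eta_0$ as the optimal initial stepsize, you are forced to set $e_0=(2\delta/\theta)\eta_0$. For $e_t$ to dominate $\EXP{D_\omega(\beta_t,\beta^*)}$ you then need $e_0\geq D_\omega(\beta_0,\beta^*)$, i.e.\ $\eta_0\geq(\theta/2\delta)\,D_\omega(\beta_0,\beta^*)$, which is a \emph{lower} bound on $\eta_0$. Shrinking $\bar\eta$ therefore makes compatibility harder, not easier: with $\delta$ held at its true value, a sufficiently small $\eta_0$ gives $e_0<D_\omega(\beta_0,\beta^*)$ and the induction that establishes $\EXP{D_\omega(\beta_t,\beta^*)}\leq e_t$ fails at $t=0$. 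The paper escapes this by inflating the constant rather than the stepsize: for each admissible $\eta_0\leq\bar\eta$ it defines $C_0:=\bar C\sqrt{\bar\eta/\eta_0}\geq C_{\min}$, which still satisfies Assumption \ref{ass:3} (the second-moment bound is one-sided), and for which $\eta_0=4\mu_F\mu_\omega M^2/C_0^2$ is \emph{exactly} the prescribed $\eta_0^*$ of Proposition \ref{pre:3} with $e_0=2\mathfrak L_\omega M^2$ held fixed. The quantity minimized is thus the (valid, but looser) error bound associated with $\delta_0=C_0^2/(2\mu_\omega)$, consistent with the theorem's claim that ``an upper bound'' is minimized. Your argument needs this rescaling of $\delta$ as a function of $\eta_0$; without it, Lemma \ref{lemma:selftunedGeneral}(a) and the domination of the true error cannot both hold for small $\eta_0$.
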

\begin{proof}
First, we show (i) and (ii) hold in case (1). Let $C_{min}$ denote the minimum of all constants $C>0$ that satisfy Assumption \ref{ass:3} (note that such a constant always exits). Let $\bar {C}\triangleq\max\left\{C_{min},\sqrt{{8M^2\mu_\omega\mu_F^2}/{\mathfrak L_\omega}}\right\}$ and define $\bar \eta\triangleq {4\mu_F\mu_\omega M^2}/{\bar {C}^2}$. Note that $\bar \eta \leq \frac{\mathfrak L_\omega}{2\mu_F}$ from definition of $\bar C$. Let $0<\eta_0 \leq\bar \eta$ be an arbitrary scalar and define $C_0\triangleq \bar C\sqrt{{\bar \eta}/{\eta_0}}$. Note that since $C_0 \geq\bar C\geq C_{min}$, $C_0$ satisfies Assumption \ref{ass:3}. Also, $C_0^2 {\mathfrak L_\omega} \geq {8M^2\mu_\omega\mu_F^2}$. Therefore, for $\eta_0={4\mu_F\mu_\omega M^2}/{{C_0}^2}$, we found a $C_0$ such that all conditions of Proposition \ref{pre:3} are met. Then we can apply Proposition \ref{pre:3} which implies that (i) and (ii) hold. Next, consider case (2). Note that since $f$ is continuously differentiable, the set $\partial f(\beta,\xi)$ is a singleton, i.e., $\partial f(\beta,\xi)=\{\nabla f(\beta,\xi)\}$. From compactness of $\mathscr B$ and continuity of $\nabla f(\cdot,\xi)$, we conclude that Assumption \ref{ass:3} holds for some $C>0$. Next, in a similar fashion to the proof of case (1), we can conclude that (i) and (ii) hold in case (2). The proof for case (3) can be done by invoking Proposition \ref{pre:2} similar to the proof for case (1).
\end{proof}
\begin{rem}
%Theorem \ref{thm:1} provides a practical stepsize rule for the stochastic mirror descent method that can be applied to both smooth and nonsmooth problems even when problem parameters are unknown. 
The unifying stepsize rule \fy{minimizes \ap{the} mean squared error} even when problem parameters are unknown. This suggests that self-tuned stepsizes are robust with respect to the choice of the initial stepsize. We will demonstrate this property of the self-tuned stepsizes in our numerical experiments in Section \ref{sec:num}. This can be seen as an important advantage in contrast with the classical harmonic stepsizes of the form $(\frac{a}{t+b})^c$ that have been seen very sensitive to the choice of three parameters $a,b$ and $c$ (cf. \cite{Spall03}, \fy{Ch. 4}).
\end{rem}
\section{Experimental Results}
\label{sec:num} 
In this section, we analyze the performance of the self-tuned RBSMD schemes for solving the following soft-margin linear support vector machine problem:
\begin{align}
\label{eq:svm}
\min \quad F(\beta)\triangleq\frac{1}{m}\displaystyle\sum\nolimits_{i=1}^m  L(\langle \beta, \mathbf x_i \rangle, y_i)  + \frac{\lambda}{2} \| \beta\|_2^2~,
\end{align}
where $L(\langle \beta, \mathbf x_i \rangle, y_i)\triangleq max\{0,1-y_i\langle \beta, \mathbf x_i \rangle\}$. SVM is known as an effective classification framework and is applied \ap{to} real-world applications such as text categorization, image classification, etc. \cite{cristianini2000introduction}. We use two binary classification data sets namely RCV1 and Skin. The Reuters Corpus Volume I (RCV1) data set \cite{lewis2004rcv1} is a collection of news-wire stories produced by Reuters journalists from 1996-1997. The articles are categorized into four different classes including Industrial, Economics, Social, and Markets. Here, the samples are documents and the features represent the existence/nonexistence of a given token in an article. We use a subset of the original data set with 199,328 samples and 138,921 features. The goal is to predict whether an article belongs to Markets class or not. Skin segmentation data set classifies each pixel of scan photographs as skin or non-skin texture and is used in face and human detection applications. The goal is identifying the skin-like regions. It consists of 3 features, and 245,057 samples out of which 50,859 are the skin samples and 194,198 are non-skin samples. 
\begin{table}[h]
\caption{Initial stepsize values for RCV1 data set}
\centering
\begin{tabular}{|c|c|c|c|}
\hline
$\lambda$ & $\eta_0[1]$ & $\eta_0[2]=\frac{\mathfrak L_\omega}{10 \mu_F}$ & $\eta_0[3]=\frac{\mathfrak L_\omega}{4 \mu_F}$\\ \hline
0.001 & 0.9 & 100 & 250 \\
0.01 & 0.9 & 10 & 25 \\ 
1 & 0.01 & 0.1 & 0.25 \\ \hline
\end{tabular}
\label{tab:step}
\end{table}
Note that \eqref{eq:svm} is a nonsmooth problem and $F$ is strongly convex with parameter $\lambda$. We compare the unifying self-tuned stepsize rule given by Theorem \ref{thm:1} with harmonic stepsizes of the form $\eta_t=\frac{a}{(t+b)}$ where $a$ and $b$ are scalars \cite{Spall03}. Our goal is to compare the sensitivity of the harmonic stepsize rule with different choices of parameters $a$ and $b$, with that of the unifying self-tuned stepsize rule with different initial stepsizes. Let $l=1$, we set $\omega=\frac{1}{2}\| \beta\|^2_2$ where $\mu_\omega=\mathfrak L_\omega=1$. For any fixed value of $\lambda$, we use three different choices of $\eta_0$, all within the interval $\left(0,{\mathfrak L_\omega}/{2 \mu_F}\right]$ as assumed in Theorem \ref{thm:1}. Initial stepsizes are denoted by $\eta_0[1], \eta_0[2]$, and $\eta_0[3]$ and are selected according to Table \ref{tab:step}.
For each experiment, the algorithm is run for $T=10^4$ iterations. Spall [\hspace{1sp}\cite{Spall03}, pg. 113] considers using $b$ that is about $5-10\%$ of the total number of iterations. Accordingly, we choose $b=\small{0.1T}$ and also $b=\small{0.2T}$ which is observed to be a better selection in some of the preliminary experiments. We select $a=\eta_0b$ in order to start from the same initial stepsize as the self-tuned stepsize.  In addition, we compare our proposed scheme with the harmonic stepsize $\eta_0/t$. 
%It can be seen that the harmonic stepsize's performance varies for different data sets. While in some cases by increasing the tuning parameters $a$ and $b$ its performance improves, in other instances its performance deteriorates
%\begin{multicols}{2}
\begin{table}[h]
\setlength{\tabcolsep}{3pt}
\centering
 \begin{tabular}{c| c  c  c}
 & $\eta_0[1]$ & $\eta_0[2]$ & $\eta_0[3]$ \\ \hline\\
\begin{turn}{90}
$\scriptsize{\lambda=0.001}$
\end{turn}
&
\begin{minipage}{.3\textwidth}
\includegraphics[scale=.33, angle=0]{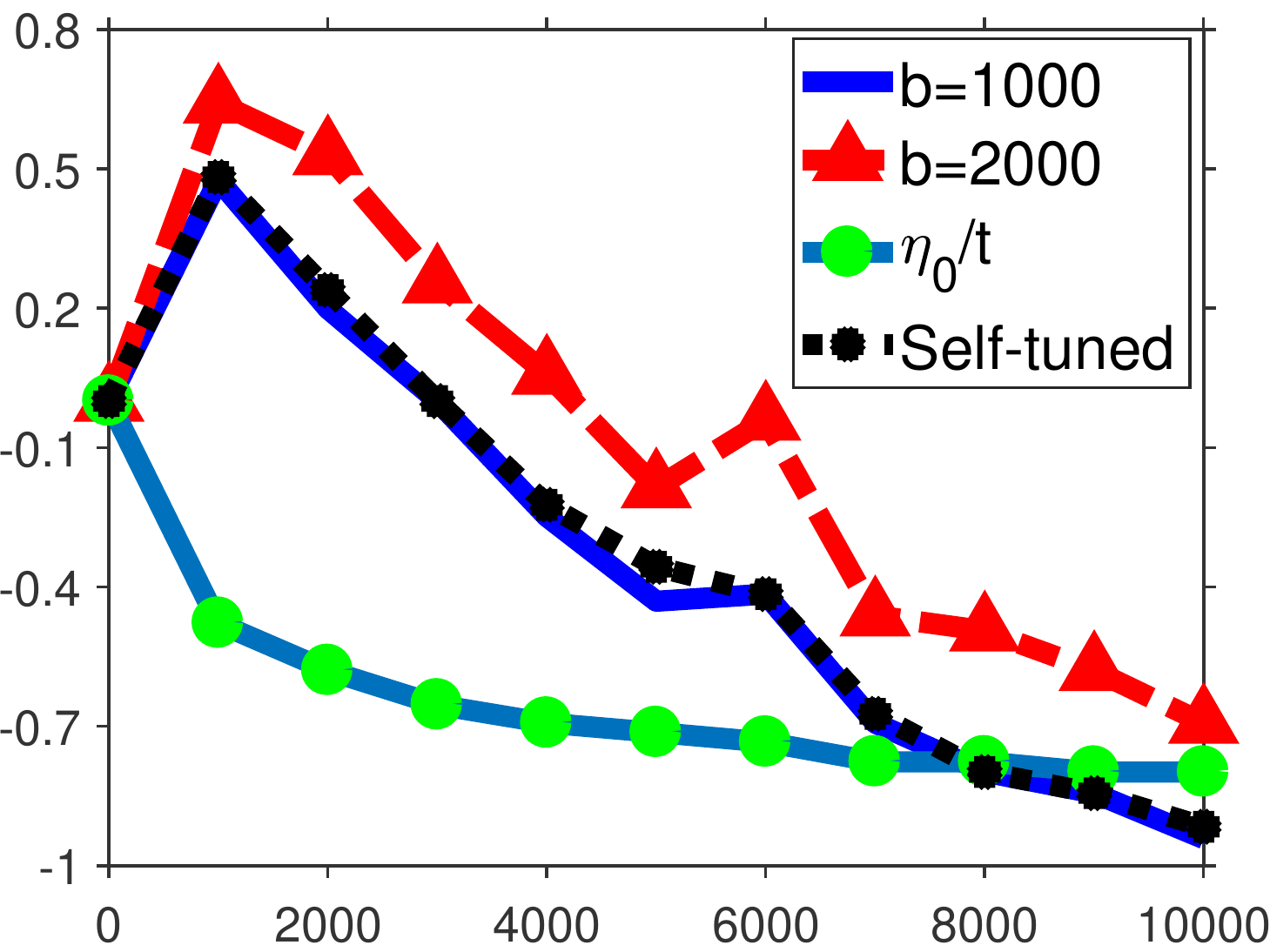}
\vspace{1mm}
\end{minipage}
&
\begin{minipage}{.3\textwidth}
\includegraphics[scale=.33, angle=0]{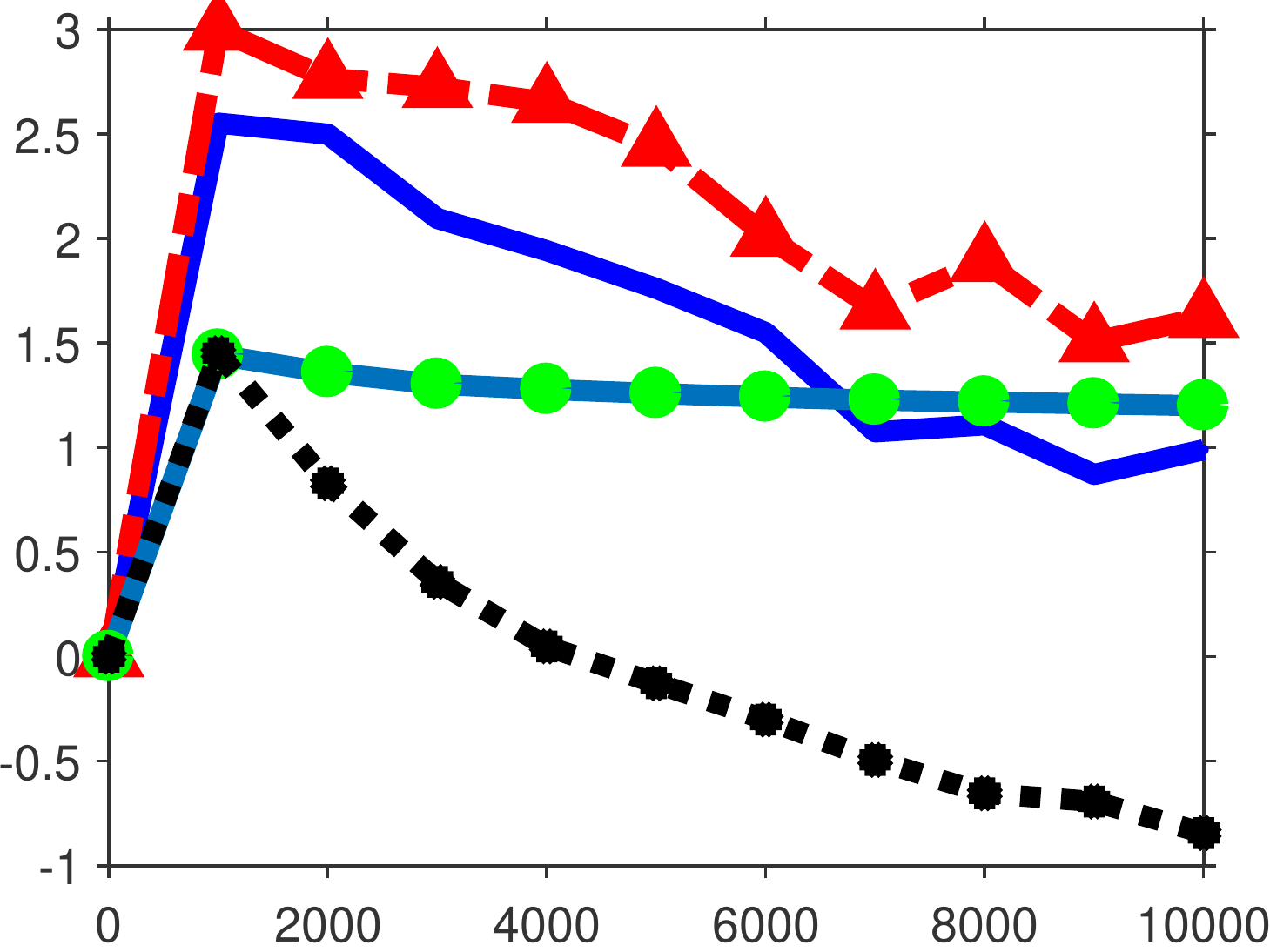}
\vspace{1mm}
\end{minipage}
	&
\begin{minipage}{.3\textwidth}
\includegraphics[scale=.33, angle=0]{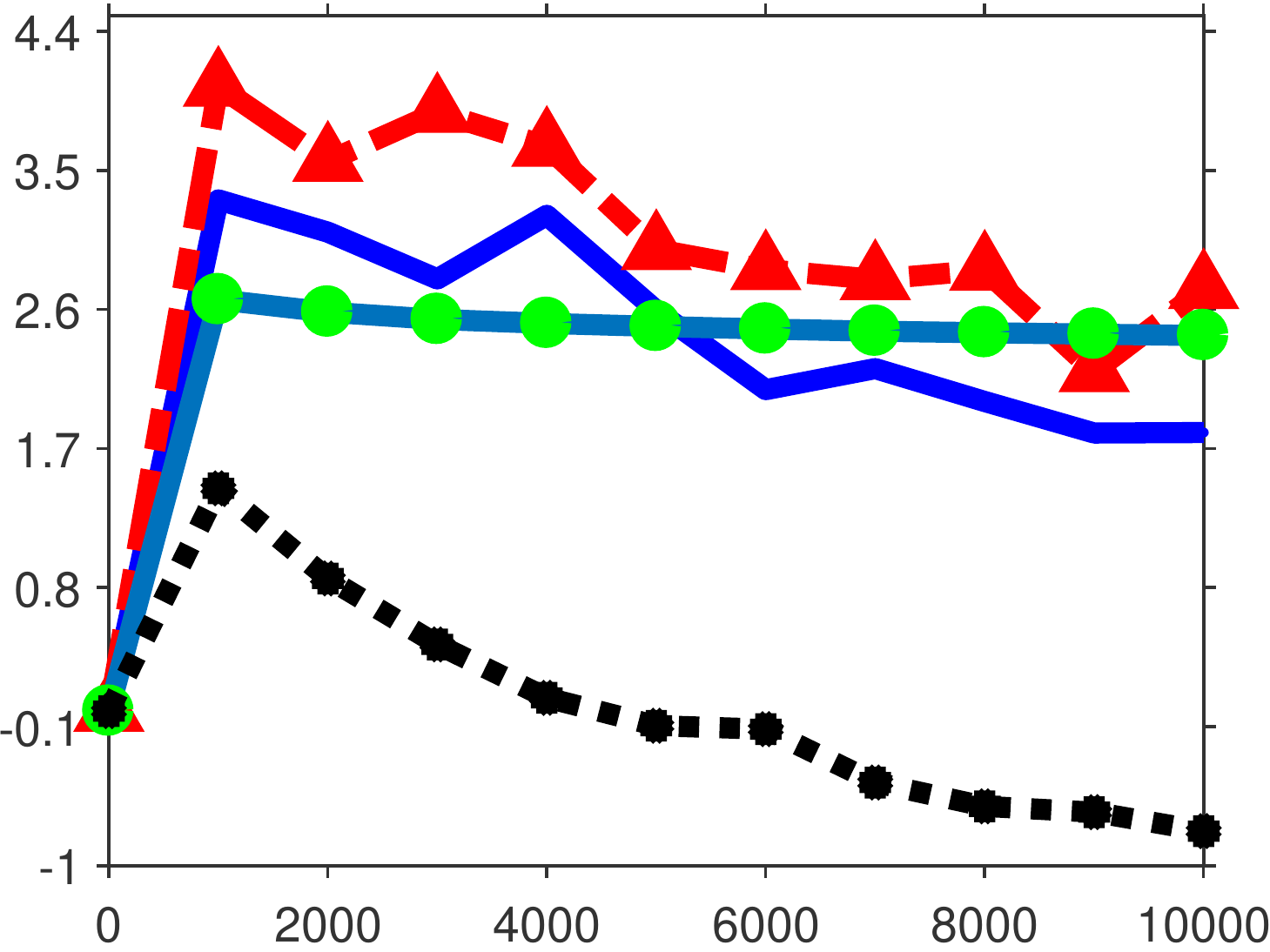}
\vspace{1mm}
\end{minipage}
\\
\begin{turn}{90}
$\scriptsize{\lambda=0.01}$
\end{turn}
&
\begin{minipage}{.3\textwidth}
\includegraphics[scale=.33, angle=0]{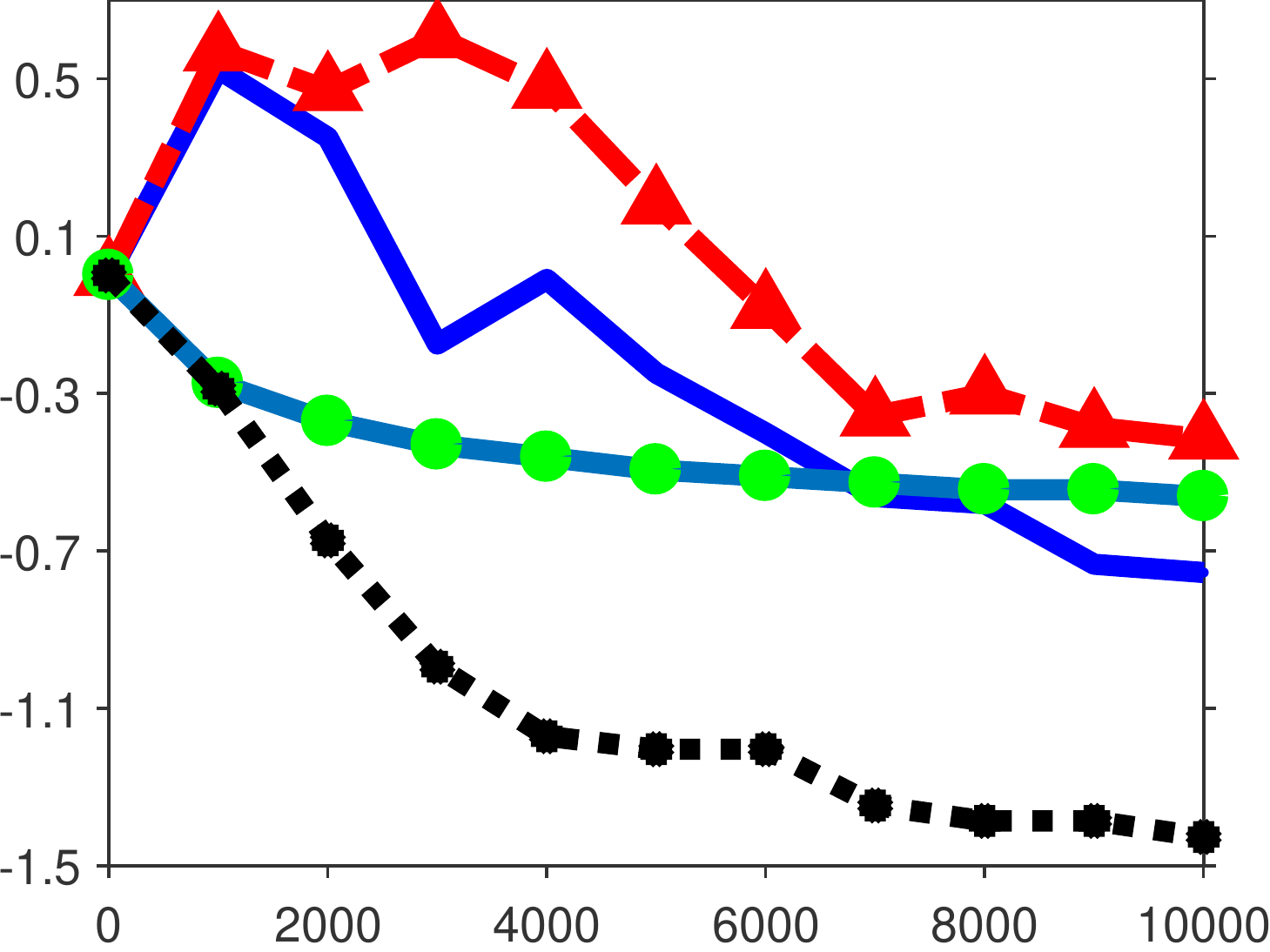}
\vspace{1mm}
\end{minipage}
&
\begin{minipage}{.3\textwidth}
\includegraphics[scale=.33, angle=0]{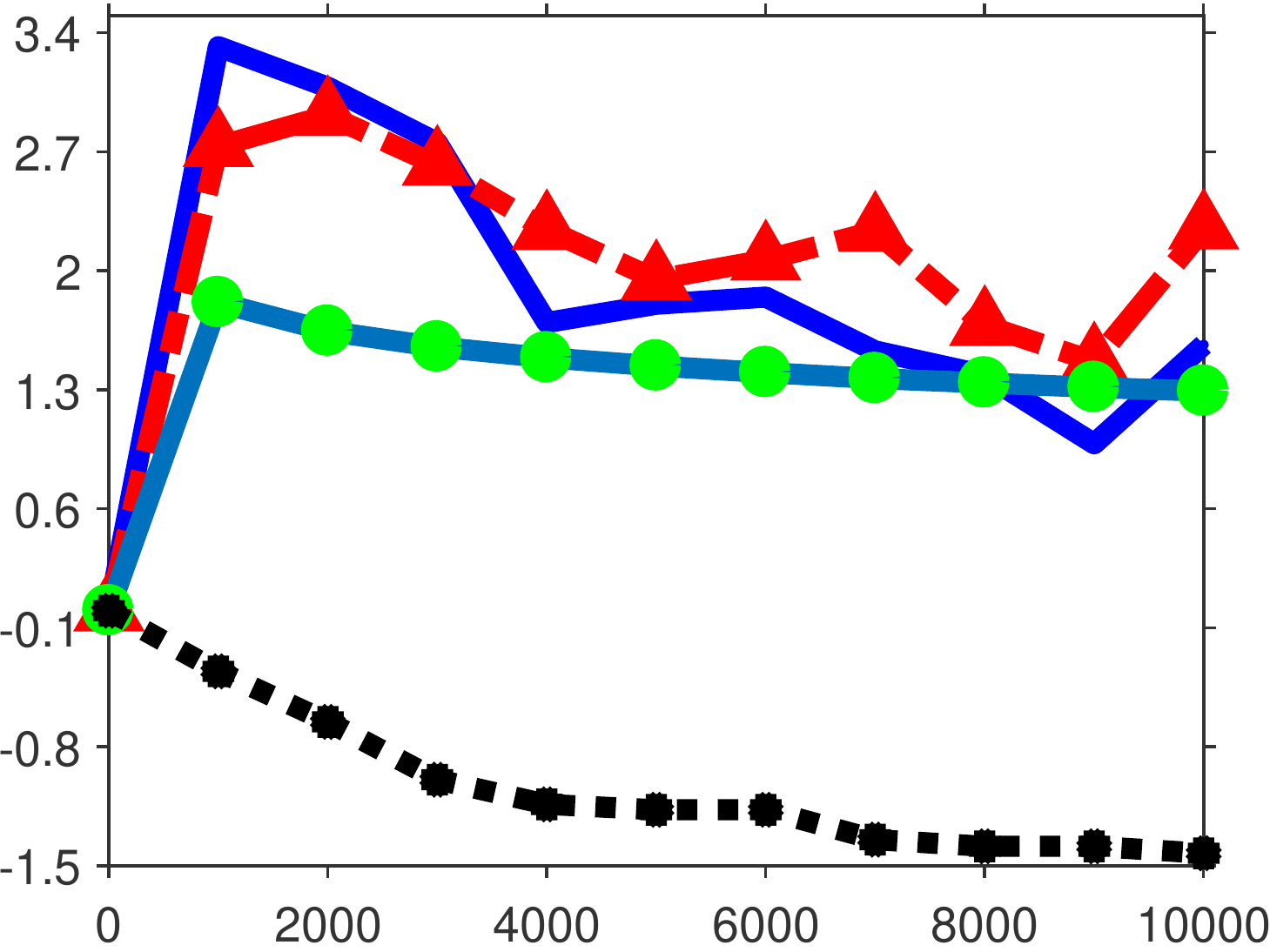}
\vspace{1mm}
\end{minipage}
&
\begin{minipage}{.3\textwidth}
\includegraphics[scale=.33, angle=0]{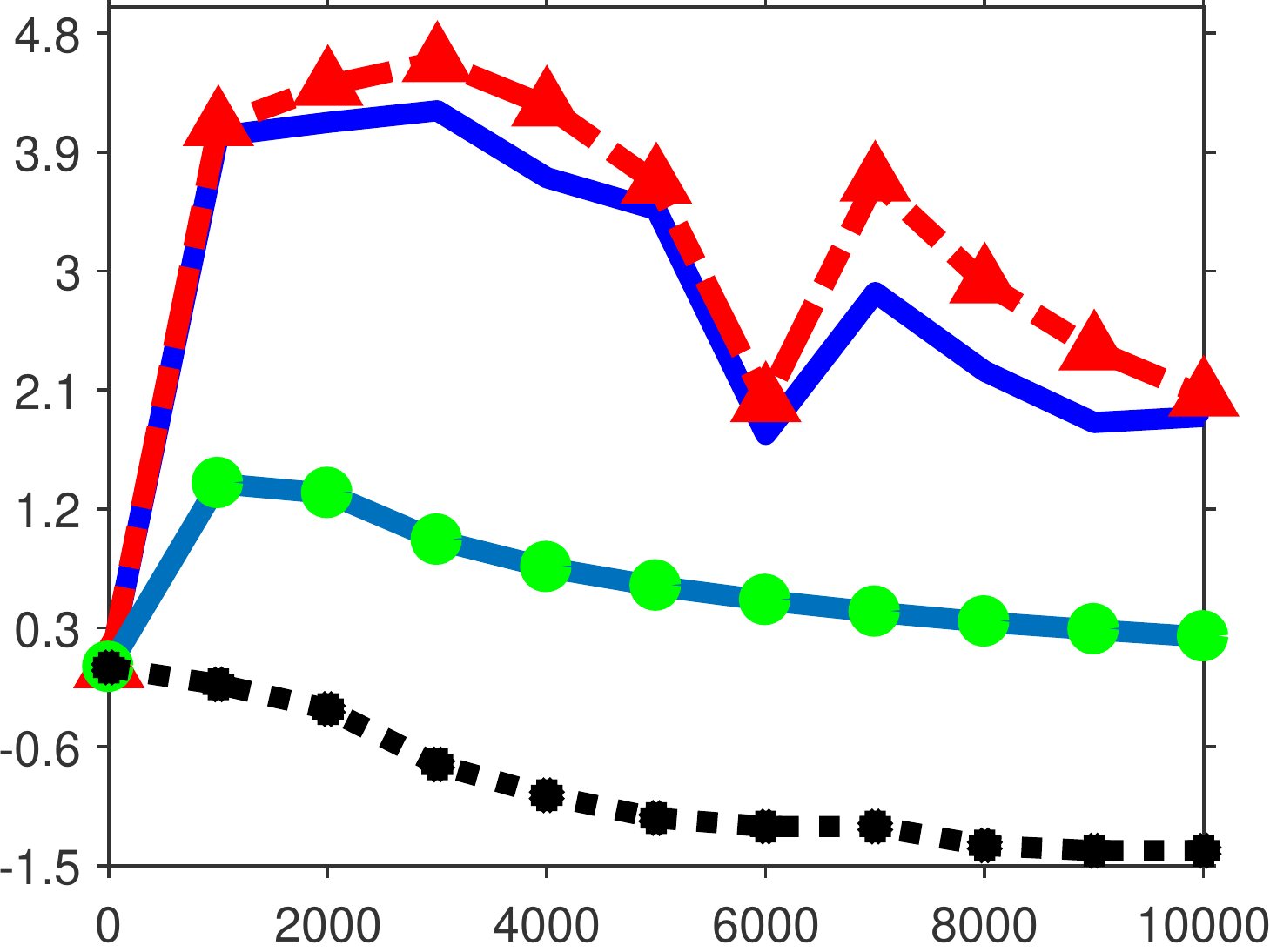}
\vspace{1mm}
\end{minipage}
\\
%0.1
%&
%\begin{minipage}{.3\textwidth}
%\includegraphics[scale=.40, angle=0]{fig27.pdf}
%\end{minipage}
%&
%\begin{minipage}{.3\textwidth}
%\includegraphics[scale=.40, angle=0]{fig28.pdf}
%\end{minipage}
%&
%\begin{minipage}{.3\textwidth}
%\includegraphics[scale=.40, angle=0]{fig29.pdf}
%\end{minipage}
%\\
\begin{turn}{90}
$\scriptsize{\lambda=1}$
\end{turn}
&
\begin{minipage}{.3\textwidth}
\includegraphics[scale=.33, angle=0]{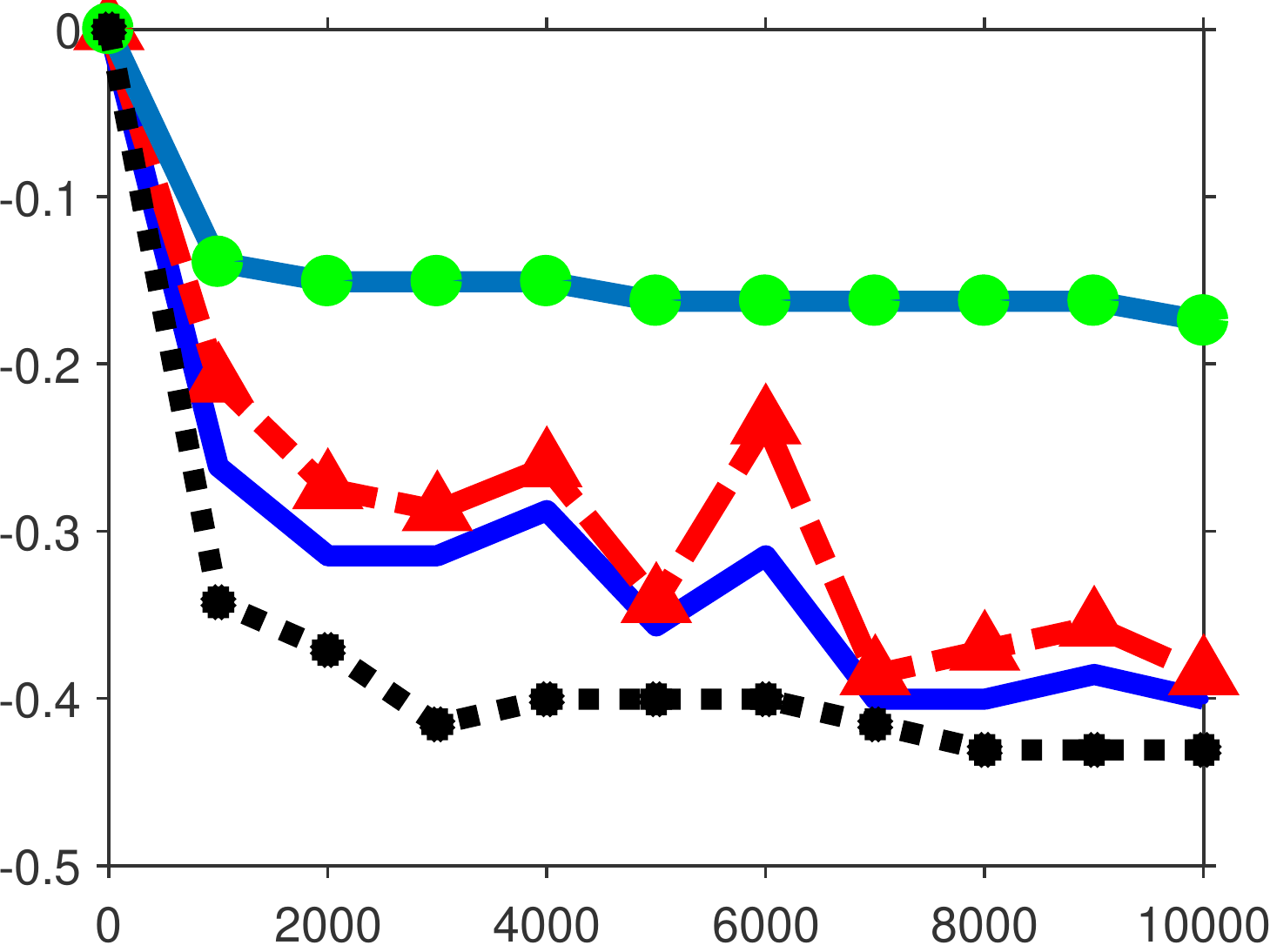}
\end{minipage}
&
\begin{minipage}{.3\textwidth}
\includegraphics[scale=.33, angle=0]{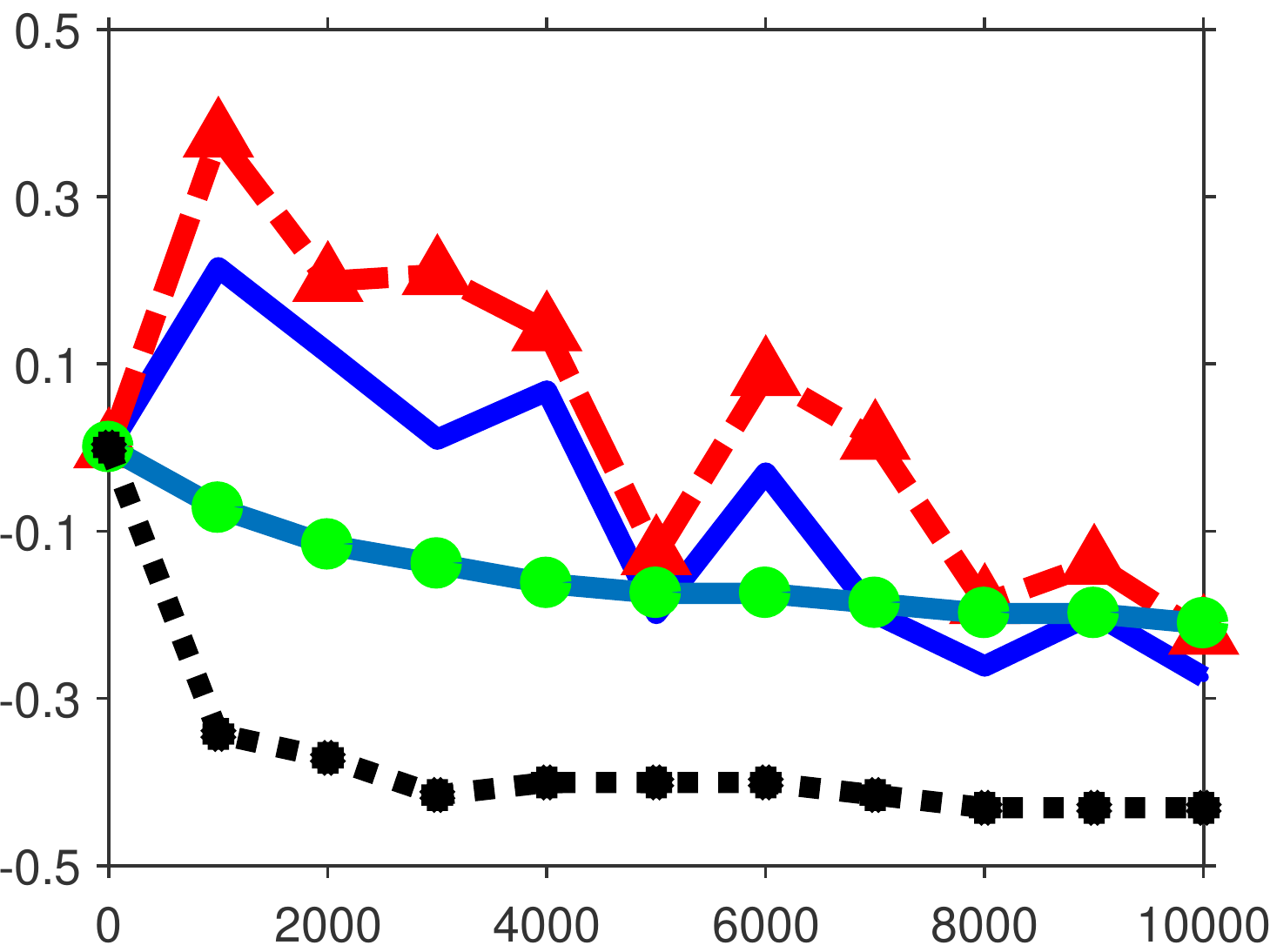}
\end{minipage}
&
\begin{minipage}{.3\textwidth}
\includegraphics[scale=.33, angle=0]{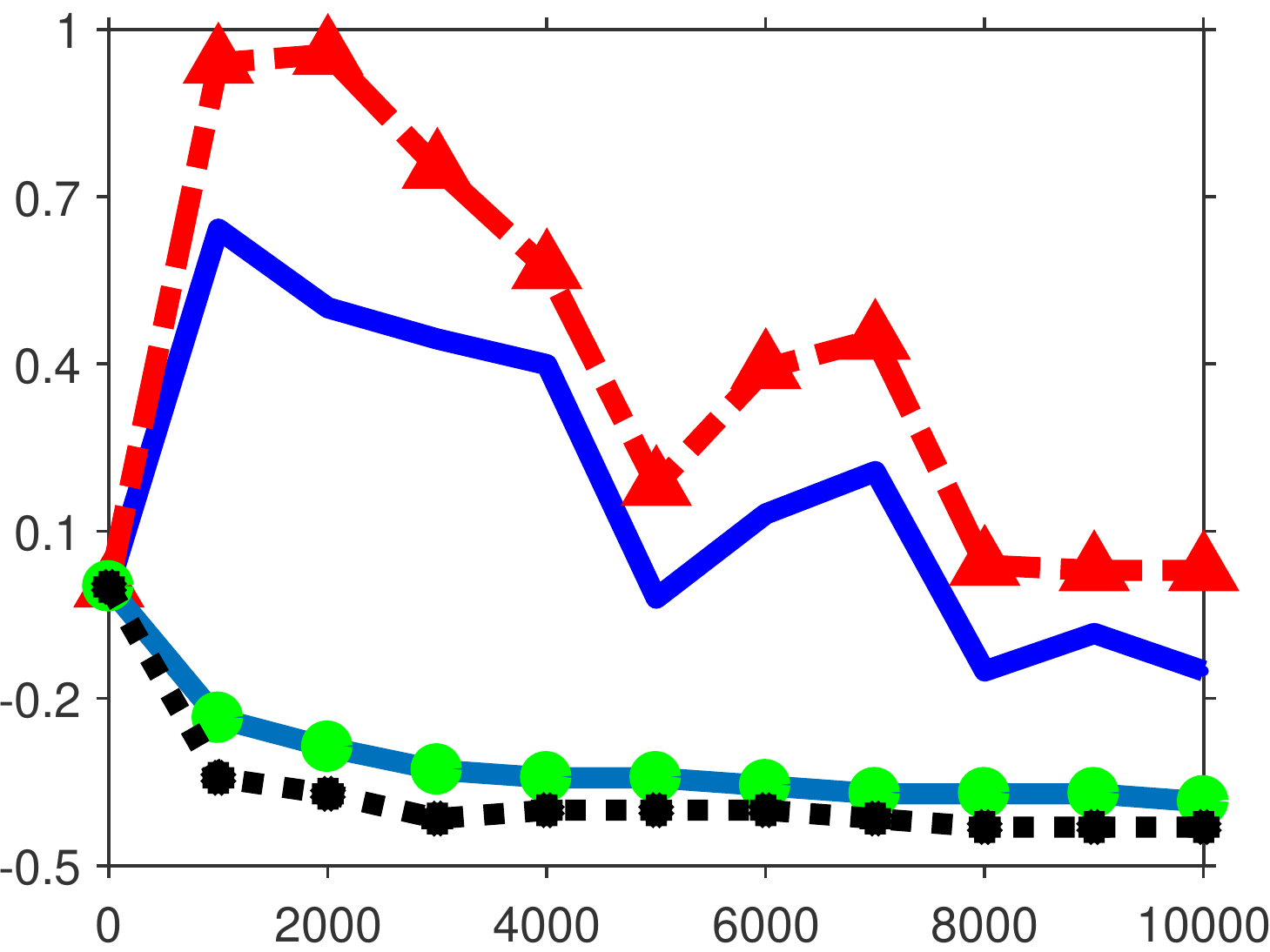}
\end{minipage}
\end{tabular}
\captionof{figure}{RCV1 data set}
\label{fig:RCV}
\end{table}
Figures \ref{fig:RCV} and \ref{fig:skin} demonstrate the performance of these stepsize schemes in terms of logarithm of the averaged objective function $F$. In these plots, the blue and red curves correspond to the harmonic stepsize with parameter $b=1000$ and $b=2000$ respectively, and the green curves denote the stepsize $\eta_0/t$. The black curves \ap{represent} the self-tuned stepsize rule.

We observe in Figures \ref{fig:RCV} and \ref{fig:skin} that the self-tuned stepsize scheme outperforms the harmonic stepsize in most of the experiments. Importantly, the self-tune stepsize is significantly more robust with respect to (i) the choice of $\lambda$; (ii) the data set; and (iii) the initial value of the stepsize. 

\begin{table}[h]
\setlength{\tabcolsep}{3pt}
\centering
 \begin{tabular}{c| c  c  c }
  & $\eta_0=0.00625$ & $\eta_0=0.0125$ & $\eta_0=0.025$ \\ \hline\\
\begin{turn}{90}
$\scriptsize{\lambda=0.001}$
\end{turn}
&
\begin{minipage}{.3\textwidth}
\includegraphics[scale=.40, angle=0]{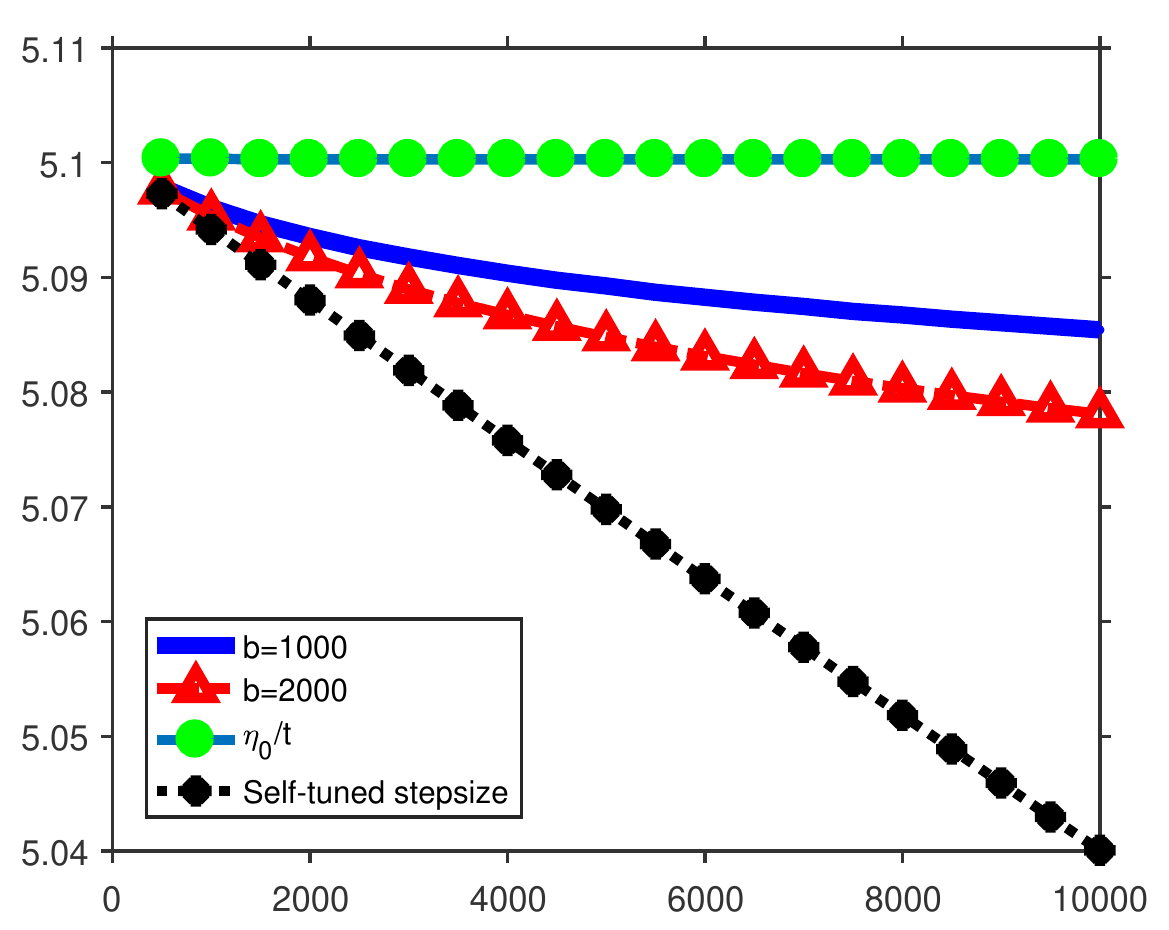}
\end{minipage}
&
\begin{minipage}{.3\textwidth}
\includegraphics[scale=.40, angle=0]{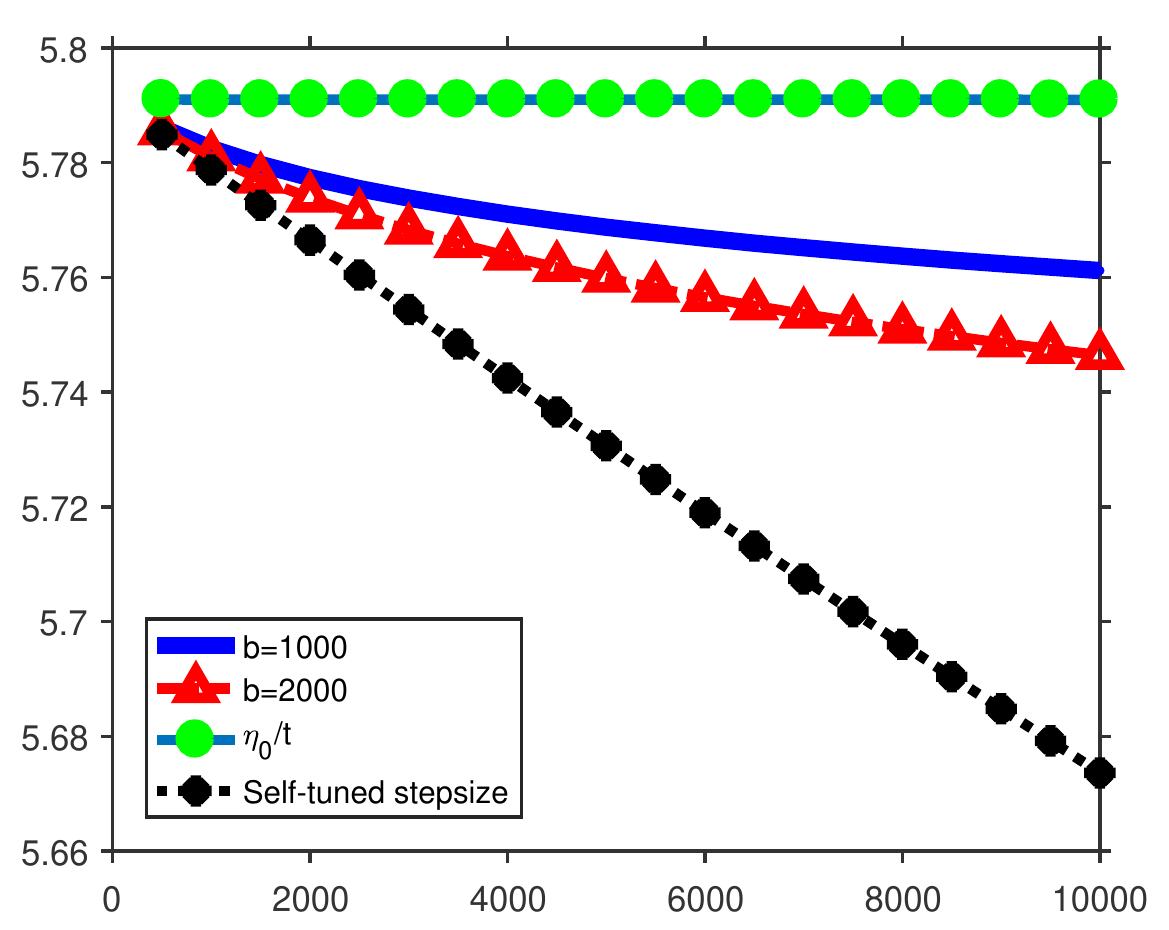}
\end{minipage}
	&
\begin{minipage}{.3\textwidth}
\includegraphics[scale=.40, angle=0]{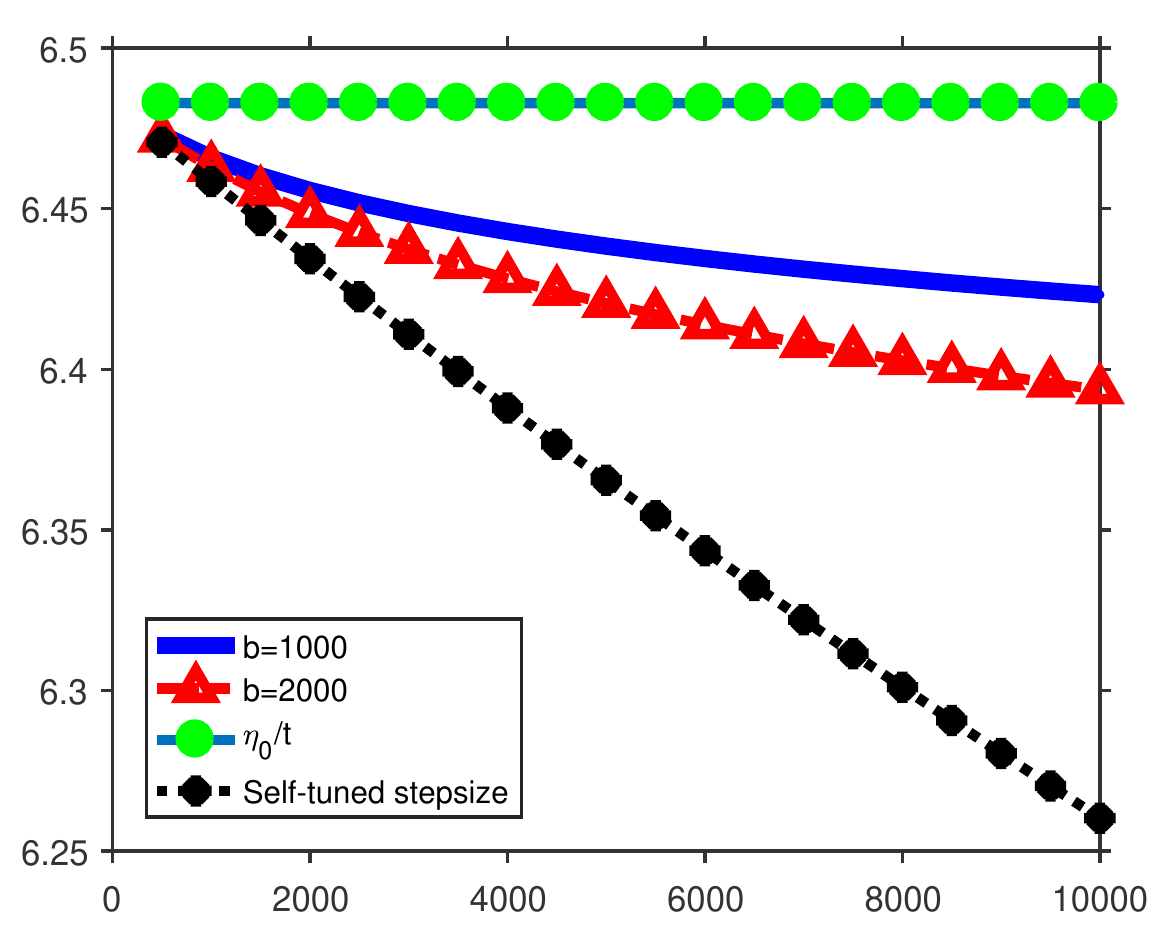}
\end{minipage}
\\
\begin{turn}{90}
$\scriptsize{\lambda=0.01}$
\end{turn}
&
\begin{minipage}{.3\textwidth}
\includegraphics[scale=.40, angle=0]{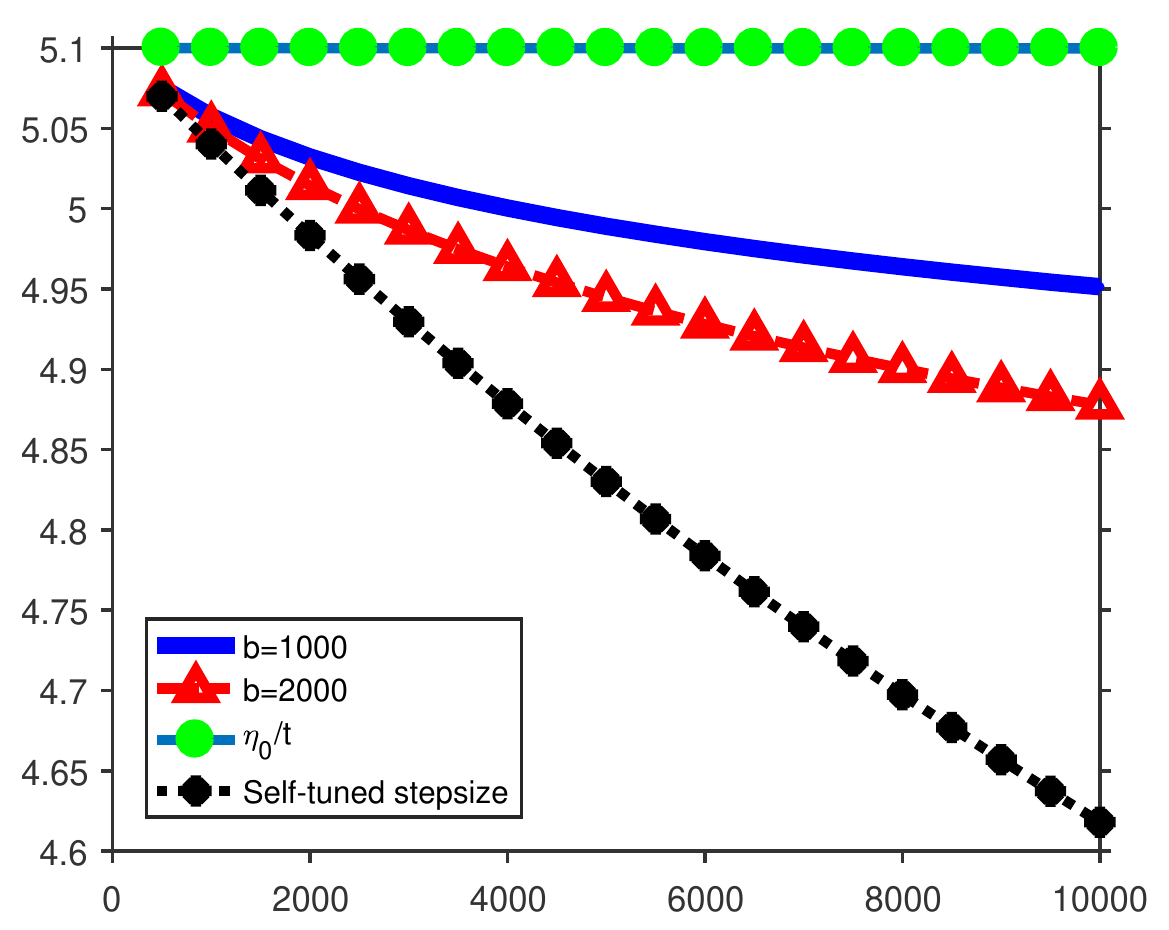}
\end{minipage}
&
\begin{minipage}{.3\textwidth}
\includegraphics[scale=.40, angle=0]{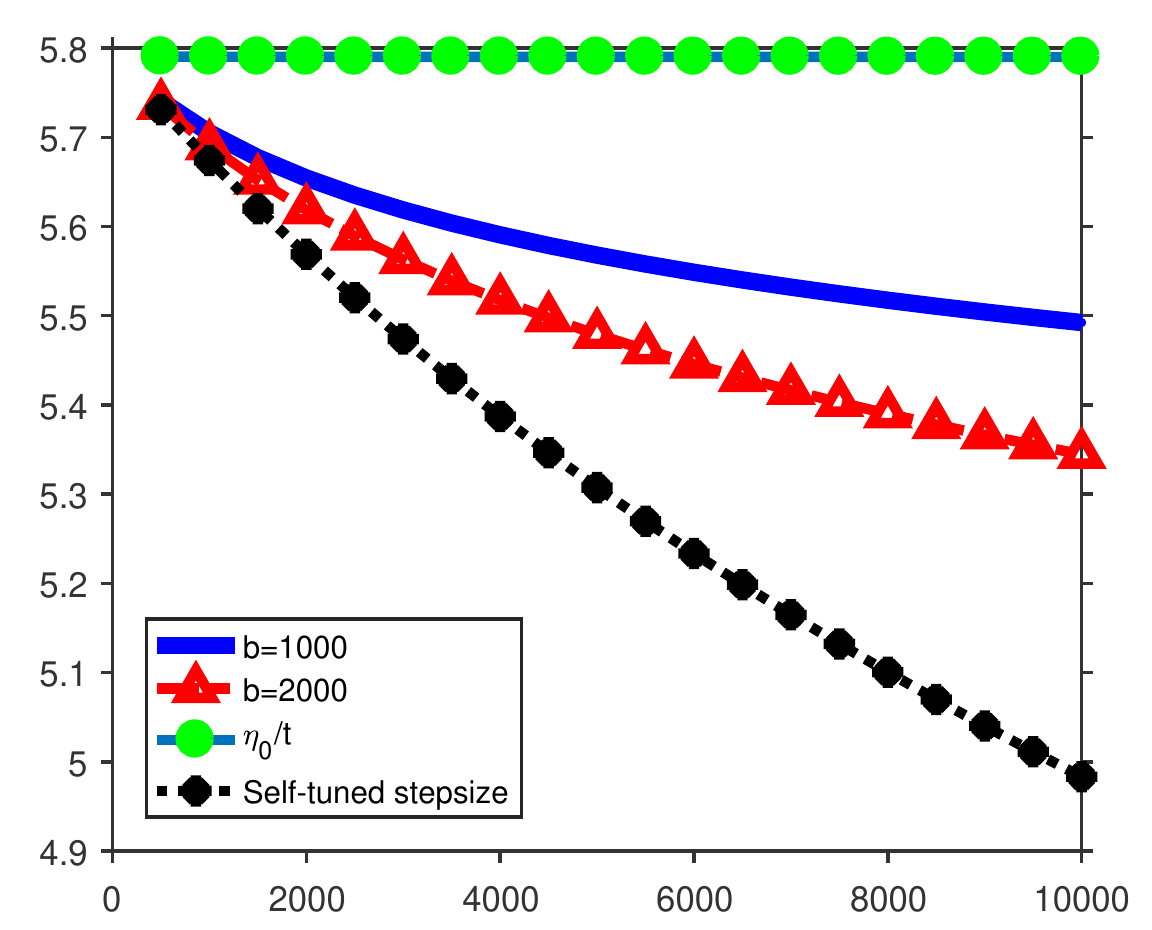}
\end{minipage}
&
\begin{minipage}{.3\textwidth}
\includegraphics[scale=.40, angle=0]{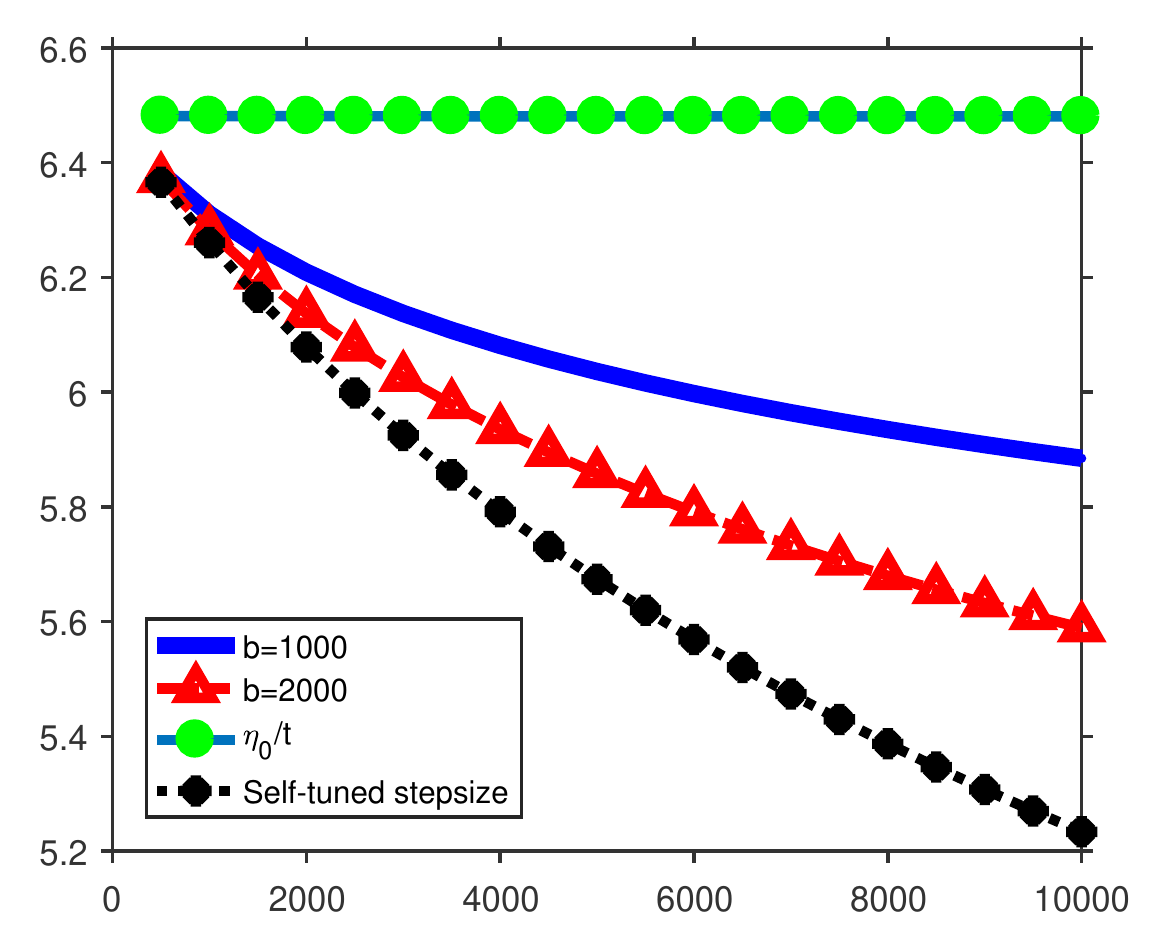}
\end{minipage}
\\
\begin{turn}{90}
$\scriptsize{\lambda=1}$
\end{turn}
&
\begin{minipage}{.3\textwidth}
\includegraphics[scale=.40, angle=0]{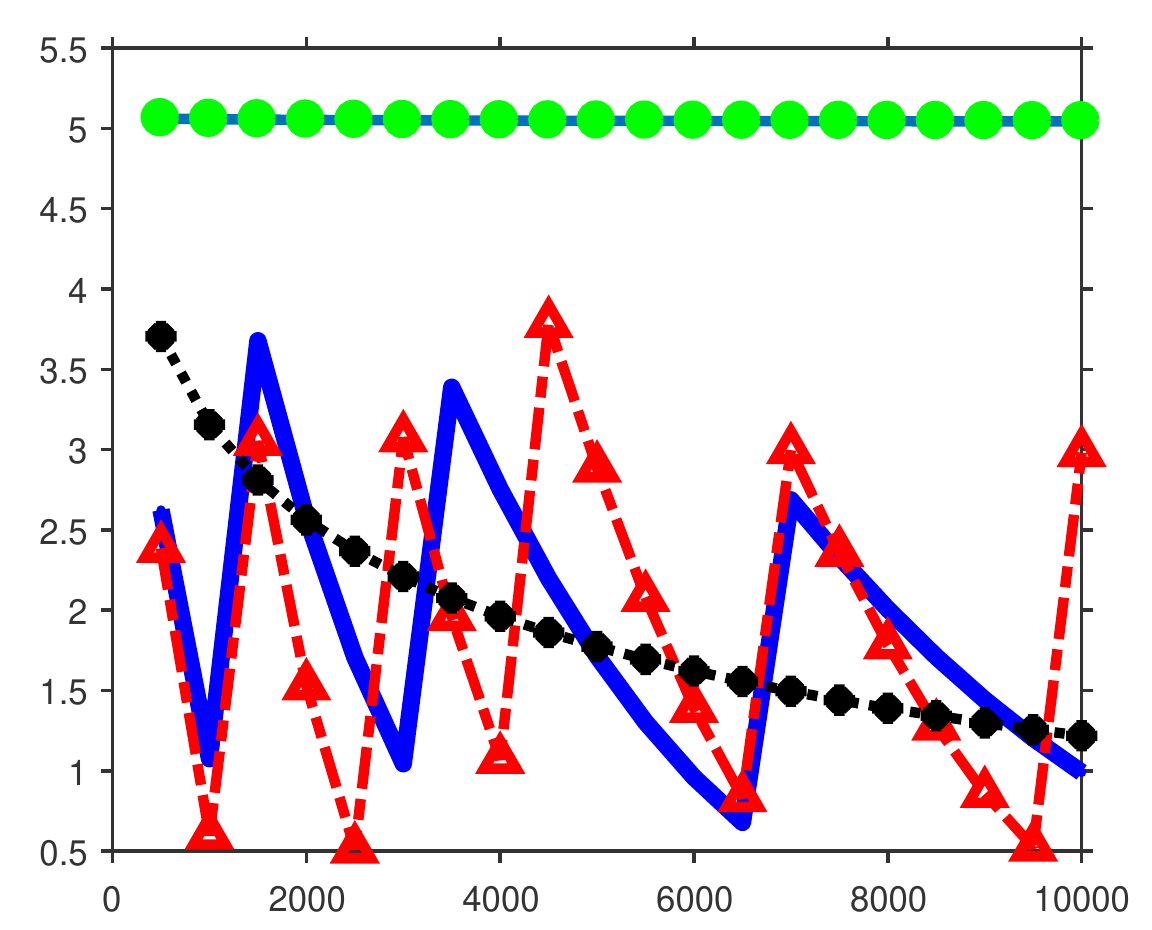}
\end{minipage}
&
\begin{minipage}{.3\textwidth}
\includegraphics[scale=.40, angle=0]{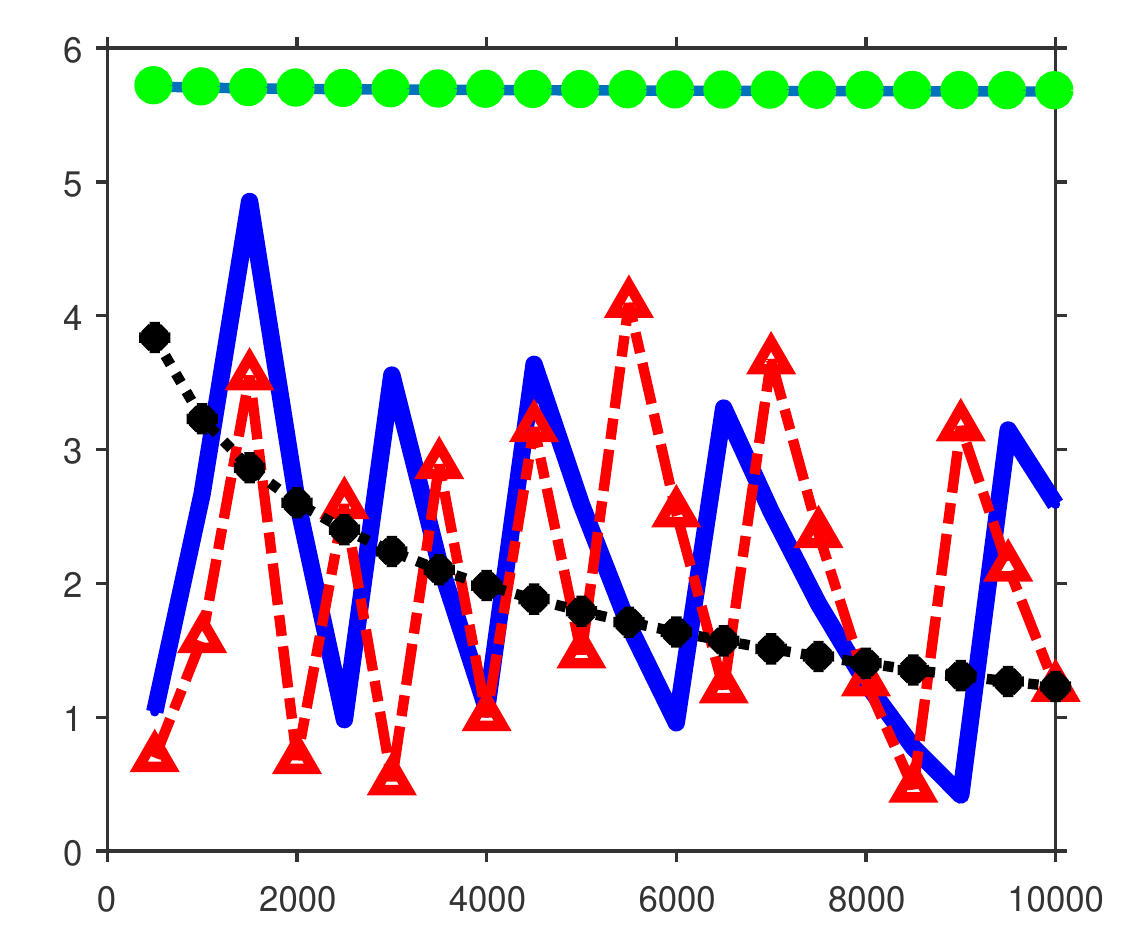}
\end{minipage}
&
\begin{minipage}{.3\textwidth}
\includegraphics[scale=.40, angle=0]{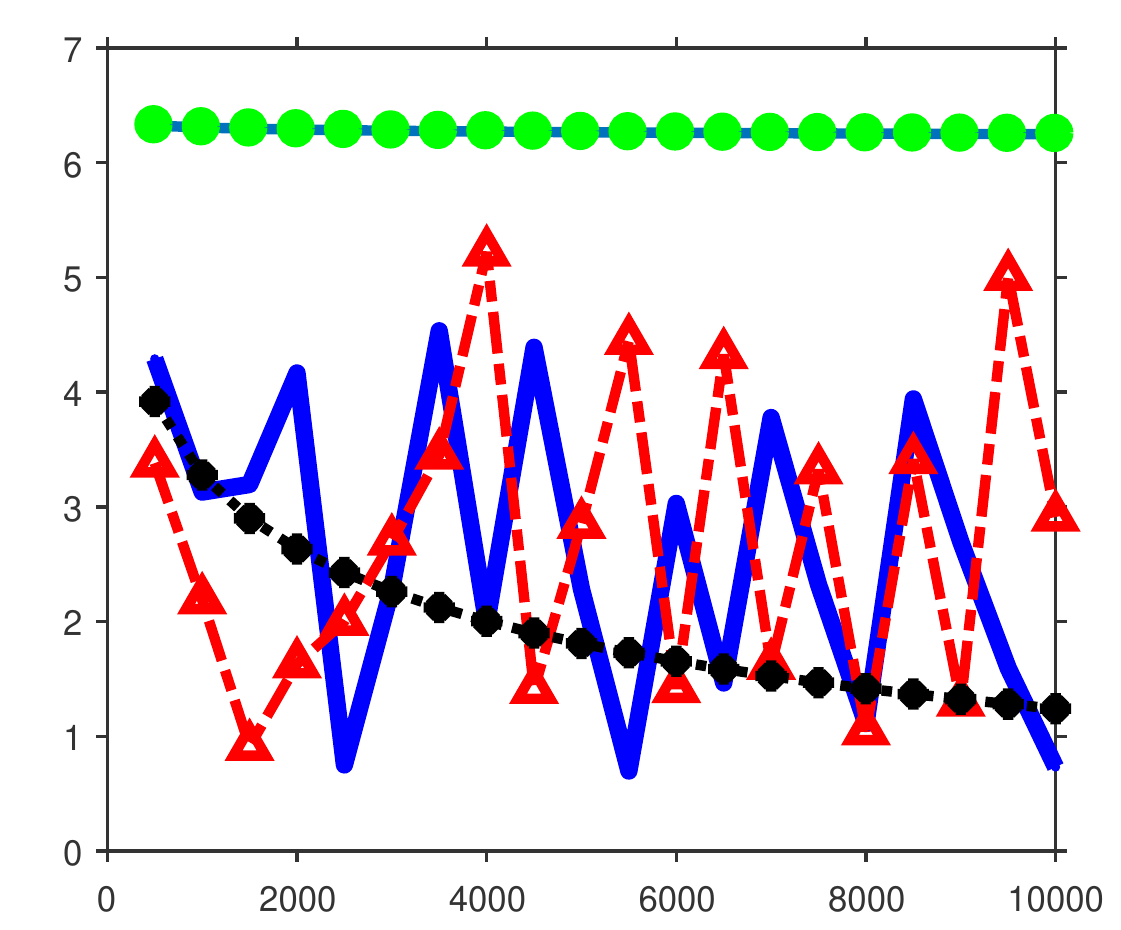}
\end{minipage}
\end{tabular}
\captionof {figure}{Skin data set}
\label{fig:skin}
\end{table}
\section{Concluding Remarks}
\label{sec:conclusion}
We consider stochastic optimization problems with strongly convex objective functions. 
%Much of the past research on SMD methods has focused on convergence and rate analysis in terms of order of the error bounds. However, the stepsize choice plays a key role in the performance of the this class of algorithms.We consider nonsmooth, smooth, and high-dimensional stochastic optimization problems. 
We develop self-tuned stepsize rules for stochastic subgradient and gradient randomized block coordinate mirror descent methods. For each scheme, we prove almost sure convergence and show that under the self-tuned stepsize rules, the error bound of the RBSMD scheme is minimized. In the case that some problem parameters are unknown, we develop a unifying self-tuned update rule for which an error bound of the scheme is minimized for any arbitrary and small enough initial stepsize.
\section {Appendix}
\textbf{Proof of Lemma \ref{lemma:selftunedGeneral}:}
\noindent (a) To show part (a), we first use induction on $t$ to show that $e_t$ satisfies 
\begin{equation}
\label{eq:70}
e_t(\eta_0^*, \eta_1^*,\ldots,\eta_{t-1}^*)=\frac{2\delta}{\theta}\eta_t^*, \quad \hbox{for all }t \geq 0.
\end{equation}
Note that it holds for $t=0$ from the definition $\eta_0^*=\frac{\theta}{2\delta}er_0$. Next, let us assume \eqref{eq:70} holds for $t$. From this and relation
\eqref{eqn:seqer}, we have
\begin{align*}
e_{t+1}(\eta_0^*, \eta_1^*,\ldots,\eta_{t}^*)&=(1-\theta\eta_t^*) e_t(\eta_0^*, \eta_1^*,\ldots,\eta_{t-1}^*)+\delta{\eta_{t}^*}^2 = (1-\theta\eta_t^*)\frac{2\delta}{\theta}\eta_t^*+\delta{\eta_{t}^*}^2
\\&=\frac{2\delta}{\theta}\eta_t^*\Big(1-\theta\eta_t^*+\frac{\theta\eta_t^*}{2} \Big)=\frac{2\delta}{\theta}\eta_t^*\Big( 1- \frac{\theta\eta_t^*}{2} \Big)=\frac{2\delta}{\theta}\eta_{t+1}^*,
\end{align*}
where in the last equation, we used the definition of $\eta_{t+1}^*$. This implies that relation \eqref{eq:70} holds for $t+1$ and therefore, for any $t\geq 0$. We now use induction on $t$ to prove that $(\eta_0^*, \eta_1^*,\ldots,\eta_{t-1}^*)$ minimizes $e_t$ for all $t \geq 1$. By the definition of $er_1$ and the relation $er_1(\eta_0^*)=\frac{2\delta}{\theta}\eta_{1}^*$ shown previously, we have
\begin{equation*}
er_1(\eta_0)-er_1(\eta_0^*)=(1-\theta\eta_0)er_0+\delta\eta_{0}^2  -\frac{2\delta}{\theta}\eta_{1}^*.
\end{equation*}
Therefore, using relation $\eta_{1}^*=\eta_{0}^*\left(1-\frac{\theta}{2}\eta_{0}^*\right)$ and $\eta_0^*=\frac{\theta}{2\delta}er_0$, we can write
\begin{align*}
&er_1(\eta_0)-er_1(\eta_0^*)=(1-\theta\eta_0)\frac{2\delta}{\theta}\eta_0^*+\delta\eta_{0}^2-\frac{2\delta}{\theta}\eta_0^*\Big(1-\frac{\theta}{2}\eta_0^*\Big)=\delta(\eta_0-\eta_0^*)^2.
\end{align*}
This implies that part (a) holds for $t=1$. In the rest of the proof, for the sake of simplicity, we use $e_{t+1}$ for an arbitrary vector $(\eta_0, \eta_1,\ldots, \eta_t) \in \mathbb U_{t+1}$ and $er^*_{t+1}$ for $e_{t+1}$ evaluated at $(\eta_0^*, \eta_1^*,\ldots,\eta_{t}^*)$. 
Now suppose part (a) holds for some $t\geq 1$ implying that $e_t \geq e_t^*$ holds for any  $(\eta_0, \eta_1,\ldots, \eta_{t-1}) \in \mathbb U_t$. Using \eqref{eqn:seqer} and~(\ref{eq:70}), we have 
\begin{eqnarray*}
e_{t+1}-e_{t+1}^*=(1-\theta\eta_t)e_t+\delta\eta_{t}^{2}-\frac{2\delta}{\theta}\eta_{t+1}^*.
\end{eqnarray*}
Using $e_t \geq e_t^*$, relation~(\ref{eq:70}), the definition of $\eta^*_{t+1}$ and that $\eta_t \leq \frac{1}{\theta}$, we get 
\begin{align*}
&e_{t+1}-e_{t+1}^* \geq (1-\theta\eta_t)\frac{2\delta}{\theta}\eta_{t}^*+\delta\eta_{t}^{2}-\frac{2\delta}{\theta}\eta_{t}^*\Big(1-\frac{\theta}{2}\eta_t^*\Big)=\delta(\eta_t-\eta_t^*)^2.
\end{align*}
Therefore, part (a) holds for $t+1$. We conclude that the result of part (a) is true for any $t\geq 1$.

\noindent (b) Using the recursive relation $\eta_{t+1}^*=\eta_{t}^*\left(1-\frac{\theta}{2}\eta_{t}^*\right)$, we have
\begin{equation*}
\frac{1}{\eta_{t+1}}=\frac{1}{\eta_t\left(1-\frac{\theta}{2}\eta_t\right)}=\frac{1}{\eta_t}+\frac{\frac{\theta}{2}}{1-\frac{\theta}{2}\eta_t}, \quad \hbox{for all } t\geq 0.
\end{equation*}
Summing up from $t=0$ to $k$ and canceling the common terms from both sides, we obtain
\begin{equation}
\label{eq:94}
\frac{1}{\eta_{k+1}}=\frac{1}{\eta_0}+\frac{\theta}{2} \sum_{t=0}^k\frac{1}{1-\frac{\theta}{2}\eta_t} > \frac{\theta}{2}\sum_{t=0}^k\frac{1}{1-\frac{\theta}{2}\eta_t}.
\end{equation}
Note that from the definition of $\eta_0^*$ and $er_0$, we have $0<\eta_0^*\leq\frac{1}{\theta}$. From relation $\eta_{t}^*=\eta_{t-1}^*\left(1-\frac{\theta}{2}\eta_{t-1}^*\right)$ we have $0<\eta_t^*\leq\frac{1}{\theta}$ for all $t\geq 0$. Consequently, the term $1-\frac{\theta}{2}\eta_t^*$ is a number between zero  and one. Therefore, $\left(1-\frac{\theta}{2}\eta_t^*\right)^{-1}>1$ which implies that $
\sum_{t=0}^k\left(1-\frac{\theta}{2}\eta_t^*\right)^{-1}>k+1$. Therefore, using relation~(\ref{eq:94}), for all $k\geq 1$ we have 
 $\displaystyle\eta_k^*<\frac{2}{\theta k}$.
Combining inequality~(\ref{eq:70}) and the preceding inequality, we obtain the desired result.\\
\noindent (c) First, we show $\sum_{t=0}^\infty \eta_t^*=\infty$. From $\eta_{t}^*=\eta_{t-1}^*\left(1-\frac{\theta}{2}\eta_{t-1}^*\right)$ for all $t\geq 0$, we obtain
\begin{equation}
\label{eq:71}
\eta_{t+1}^*=\eta_0^*\prod_{i=0}^{t}\left(1-\frac{\theta}{2}\eta_i^*\right).
\end{equation}
Note that since $\eta_0^* \in \left(0,\frac{1}{\theta}\right]$, from $\eta_{t}^*=\eta_{t-1}^*\left(1-\frac{\theta}{2}\eta_{t-1}^*\right)$ it follows that $\{\eta_t^*\}$ is positive non-increasing sequence. Therefore, the limit $\lim_{t \to \infty}\eta_t^*$ exists and it is less than $\frac{2}{\theta}$. Thus, by taking the limits from both sides in $\eta_{t}^*=\eta_{t-1}^*\left(1-\frac{\theta}{2}\eta_{t-1}^*\right)$, we obtain $\lim_{t \to \infty}\eta_t^*=0$. Then, by taking limits in~(\ref{eq:71}), we further obtain 
\begin{equation*}
\lim_{t \to \infty}\prod_{i=0}^{t}\left(1-\frac{\theta}{2}\eta_i^*\right)=0.
\end{equation*}
To arrive at a contradiction, suppose that  $\sum_{i=0}^\infty \eta_i^*<\infty$. Then, there is an $\epsilon \in (0,1)$ such that for $j$ sufficiently large, we have $
\frac{\theta}{2}\sum_{i=j}^{t}\eta_i^*\leq \epsilon, \hbox{ for all } t\geq j. $
Since $\prod_{i=j}^{t}\left(1-\frac{\theta}{2}\eta_i^*\right)\geq 1-\frac{\theta}{2}\sum_{i=j}^{t}\eta_i^*$ for all $j<t$, by letting $t \to \infty$, we obtain for all $j$ sufficiently large, 
\begin{equation*}
\prod_{i=j}^{\infty}\left(1-\frac{\theta}{2}\eta_i^*\right)\geq 1-\frac{\theta}{2}\sum_{i=j}^{\infty}\eta_i^* \geq1-\epsilon >0.
\end{equation*}
This contradicts the statement $\lim_{t \to \infty}\prod_{i=0}^{t}\left(1-\frac{\theta}{2}\eta_i^*\right)=0$. Hence, we conclude that $\sum_{t=0}^\infty \eta_t^*=\infty$. Next, we show that $\sum_{t=0}^\infty {\eta_t^*}^2< \infty$. From $\eta_{t}^*=\eta_{t-1}^*\left(1-\frac{\theta}{2}\eta_{t-1}^*\right)$ we have
\begin{equation*}
\eta^*_i=\eta^*_{i-1}-\frac{\theta}{2}{\eta^{*}_{i-1}}^2,\quad  \hbox{for all } i\geq 1.
\end{equation*}
Summing the preceding relation from $i=0$ to $t$ and canceling the common terms, we obtain
\begin{equation*}
\eta^*_t=\eta^*_{0}-\frac{\theta}{2}\sum_{i=0}^{t-1}\eta^{*2}_{i},\quad  \hbox{for all } t\geq 1.
\end{equation*}
By taking limits and recalling that $\lim_{t \to \infty}\eta^{*}_t=0$, we obtain the desired result.
\bibliographystyle{IEEEtran}
\bibliography{reference} 
\end{document}